\newcommand\R{{\mathbb{R}}}
\newcommand\C{{\mathbb{C}}}
\newcommand\Z{{\mathbf{Z}}}
\newcommand\Q{{\mathbf{Q}}}
\newcommand\N{{\mathbf{N}}}
\newcommand\n{{\mathbf{n}}}
\renewcommand\P{{\mathbf{P}}}
\newcommand\E{{\mathbf{E}}}
\newcommand\eps{{\varepsilon}}
\newcommand\SL{\operatorname{SL}}
\newcommand\st{\operatorname{st}}
\theoremstyle{plain}
  \newtheorem{theorem}{Theorem}
  \newtheorem{proposition}[theorem]{Proposition}
  \newtheorem{lemma}[theorem]{Lemma}
  \newtheorem{corollary}[theorem]{Corollary}
\theoremstyle{definition}
  \newtheorem{definition}[theorem]{Definition}
  \newtheorem{example}[theorem]{Example}
  \newtheorem{remark}[theorem]{Remark}
\begin{document}

\title{Multiple recurrence in quasirandom groups}

\author{Vitaly Bergelson}
\address{Department of Mathematics, Ohio State University, Columbus OH 43210-1174}
\email{vitaly@math.ohio-state.edu}

\author{Terence Tao}
\address{UCLA Department of Mathematics, Los Angeles, CA 90095-1555.}
\email{tao@math.ucla.edu}

\subjclass{37A25}

\begin{abstract}  We establish a new mixing theorem for quasirandom groups (finite groups with no low-dimensional unitary representations) $G$ which, informally speaking, asserts that if $g, x$ are drawn uniformly at random from $G$, then the quadruple $(g,x,gx,xg)$ behaves like a random tuple in $G^4$, subject to the obvious constraint that $gx$ and $xg$ are conjugate to each other.   The proof is non-elementary, proceeding by first using an ultraproduct construction to replace the finitary claim on quasirandom groups with an infinitary analogue concerning a limiting group object that we call an \emph{ultra quasirandom group}, and then using the machinery of idempotent ultrafilters to establish the required mixing property for such groups.   Some simpler recurrence theorems (involving tuples such as $(x,gx,xg)$) are also presented, as well as some further discussion of specific examples of ultra quasirandom groups.
\end{abstract}

\maketitle

\section{Introduction}

In \cite{gowers}, Gowers introduced the notion of a \emph{quasirandom group}:

\begin{definition}[Quasirandom group]
A finite group $G$ is said to be \emph{$D$-quasirandom} for some parameter $D \geq 1$ if all non-trivial unitary\footnote{If desired, one could replace ``unitary'' here by ``$\C$-linear'' (thus relaxing $U_d(\C)$ to $GL_d(\C)$), because any linear action of a finite group preserves at least one Hermitian form by an averaging argument.} representations $\rho: G \to U_d(\C)$ of $G$ have dimension $d$ greater than or equal to $D$.  
\end{definition}

We informally refer to a \emph{quasirandom group} to be a finite group that is $D$-quasirandom for a large value of $D$.

\begin{example}  The alternating group $A_n$ is $n-1$-quasirandom for all $n \geq 6$.  More generally, if $G$ is perfect (i.e. $G=[G,G]$) and has no normal subgroup of index less than $m$, then $G$ is $\sqrt{\log m}/2$-quasirandom; see \cite[Theorem 4.8]{gowers}, which also asserts a converse implication (but with $\sqrt{\log m}/2$ replaced by $\sqrt{m}$).  In particular, if $G$ is a non-abelian finite simple group, then $G$ is $\sqrt{\log|G|}/2$-quasirandom, where $|G|$ denotes the cardinality of $G$; more generally, if every group in the Jordan-Holder decomposition of $G$ is a non-abelian finite simple group of order at least $m$, then $G$ is $\sqrt{\log m}/2$-quasirandom.  On the other hand, for many finite simple groups, one can improve this logarithmic bound to a polynomial bound.  For instance, the group $SL_2(F_p)$ is $\frac{p-1}{2}$-quasirandom for any prime $p$; see Lemma \ref{frob}.  
More generally, see \cite{landazuri-seitz} for a precise computation of the quasirandomness for finite Chevalley groups.  

One can combine these observations to obtain further examples of quasirandom groups.  For instance, if $p,q$ are distinct primes, then by the Chinese remainder theorem, $SL_2(\Z/pq\Z)$ is isomorphic to the direct product of $SL_2(F_p)$ and $SL_2(F_q)$, and so is $\frac{\min(p,q)-1}{2}$-quasirandom.
\end{example}

When $D$ is large, such groups become mixing in the sense that averages such as
$$ \E_{x \in G} f_1(x) f_2(xg)$$
for ``typical'' values of $g$ tend to stay very close to $(\E_G f_1) (\E_G f_2)$ for bounded functions $f_1,f_2: G \to \R$, where we use the averaging notation
$$\E_G f = \E_{x \in G} f(x) := \frac{1}{|G|} \sum_{x \in G} f(x).$$
More precisely, we have the following inequality, essentially present in \cite{gowers} (see also \cite{babai-nikolov-pyber}):

\begin{proposition}[Weak mixing]\label{wm}  Let $G$ be a $D$-quasirandom group for some $D \geq 1$, and let $f_1,f_2: G \to \C$ be functions.  Then
$$ \E_{g \in G} |\E_{x \in G} f_1(x) f_2(xg) - (\E_G f_1) (\E_G f_2)| \leq D^{-1/2} \|f_1\|_{L^2(G)} \|f_2\|_{L^2(G)}$$
where $\|f\|_{L^2(G)} := (\E_G |f|^2)^{1/2}$.
\end{proposition}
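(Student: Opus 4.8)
The plan is to run the standard Fourier-analytic argument over the group $G$ (essentially that of \cite{gowers}). The first move is a reduction to mean-zero functions: a direct computation shows that the inner expression $\E_{x \in G} f_1(x) f_2(xg) - (\E_G f_1)(\E_G f_2)$ is unchanged if one replaces $f_1$ by $f_1 - \E_G f_1$ or $f_2$ by $f_2 - \E_G f_2$, since e.g. subtracting a constant $c$ from $f_1$ alters $\E_x f_1(x) f_2(xg)$ by $c\, \E_G f_2$ and alters $(\E_G f_1)(\E_G f_2)$ by the same amount. Hence we may assume $\E_G f_1 = \E_G f_2 = 0$, and it remains to bound $\E_{g \in G} |F(g)|$ by $D^{-1/2}\|f_1\|_{L^2(G)}\|f_2\|_{L^2(G)}$, where $F(g) := \E_{x \in G} f_1(x) f_2(xg)$. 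By Cauchy--Schwarz in the variable $g$, it suffices to establish the second-moment estimate $\E_{g \in G} |F(g)|^2 \leq D^{-1} \|f_1\|_{L^2(G)}^2 \|f_2\|_{L^2(G)}^2$.

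Next I would recast $F$ as a matrix coefficient of the right regular representation $\rho$ of $G$ on $L^2(G)$, defined by $(\rho(g)h)(x) := h(xg)$, which is unitary for the inner product $\langle u,v\rangle := \E_{x\in G} u(x)\overline{v(x)}$. Writing $h_2 := \overline{f_2}$ and noting that $\rho$ commutes with complex conjugation, we have $F(g) = \E_x f_1(x) (\rho(g)f_2)(x) = \langle f_1, \rho(g) h_2\rangle$, so that $\E_{g\in G}|F(g)|^2 = \E_{g\in G}|\langle \rho(g) h_2, f_1\rangle|^2$ is an average of squared matrix coefficients of $\rho$. Since $f_1$ and $h_2$ are mean zero, both lie in the subspace $L^2_0(G)$ orthogonal to the constants, on which $\rho$ decomposes as a direct sum of copies of non-trivial irreducible representations of $G$; by $D$-quasirandomness, every irreducible constituent appearing here has dimension at least $D$.

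The heart of the matter is thus the representation-theoretic fact that if $\rho$ is a unitary representation of a finite group all of whose irreducible constituents have dimension $\geq D$, then $\E_{g \in G}|\langle \rho(g)u,v\rangle|^2 \leq D^{-1}\|u\|^2\|v\|^2$. I would prove this by splitting the representation space into \emph{isotypic} components, one for each isomorphism class of irreducible: since $\rho(g)$ preserves each such component and distinct components are orthogonal, $\langle \rho(g)u,v\rangle$ is the sum over types $\pi$ of $\langle\rho(g)u_\pi,v_\pi\rangle$, and when one expands $|\langle\rho(g)u,v\rangle|^2$ and averages over $g$, all cross terms between inequivalent types vanish by Schur orthogonality. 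Within a single isotypic component of type $\pi$, where $\rho \cong \mathrm{id}\otimes\pi$, the relations $\E_g \pi(g)_{ij}\overline{\pi(g)_{kl}} = d_\pi^{-1}\delta_{ik}\delta_{jl}$ yield a contribution $d_\pi^{-1}\trace(A_\pi B_\pi)$, where $A_\pi, B_\pi$ are the (positive semidefinite) Gram matrices of the multiplicity-space components of $u$ and $v$; bounding $\trace(A_\pi B_\pi) \leq \trace(A_\pi)\trace(B_\pi) = \|u_\pi\|^2\|v_\pi\|^2$, replacing each $d_\pi^{-1}$ by $D^{-1}$, and summing using $\sum_\pi \|u_\pi\|^2\|v_\pi\|^2 \leq \|u\|^2\|v\|^2$ gives the claim. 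Applying this with $u = h_2$ and $v = f_1$ finishes the proof. The only genuinely delicate point is this last step: one must pass to isotypic rather than merely irreducible pieces, since a naive decomposition into irreducible subspaces leaves non-vanishing cross terms coming from the multiplicity spaces, and one must track the factor $d_\pi^{-1}$ carefully, as it is precisely the source of the gain $D^{-1/2}$.
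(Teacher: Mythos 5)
Your argument is correct, and it reaches the stated bound, but it takes a genuinely different route from the paper. After the same initial reduction to mean-zero functions and the same Cauchy--Schwarz step in $g$ (note that the quantity you then bound, $\E_{g}|F(g)|^2$, is exactly $\|T_{f_2} f_1\|_{L^2(G)}^2$ for the operator $T_{f_2} f_1(g) := \E_{x} f_1(x) f_2(xg)$, so both proofs are ultimately establishing the operator norm bound $\|T_{f_2}\|_{\operatorname{op}} \leq D^{-1/2}\|f_2\|_{L^2(G)}$), the paper avoids Schur orthogonality entirely: it observes that the right-singular space $V$ of $T_{f_2}$ for the top singular value is invariant under right translation and contains no nonzero constants, hence has dimension at least $D$ by quasirandomness, and then compares the operator norm to the Hilbert--Schmidt norm via $\|T\|_{\operatorname{HS}}^2 \geq D \|T\|_{\operatorname{op}}^2$ together with the identity $\|T\|_{\operatorname{HS}} = \|f_2\|_{L^2(G)}$. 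Your proof instead runs the explicit nonabelian Fourier computation: isotypic decomposition of $L^2_0(G)$ under the right regular representation, vanishing of cross terms between inequivalent types, and the orthogonality relations $\E_g \pi(g)_{ij}\overline{\pi(g)_{kl}} = d_\pi^{-1}\delta_{ik}\delta_{jl}$ within each type, with the multiplicity spaces handled via the bound $\operatorname{trace}(A_\pi B_\pi) \leq \operatorname{trace}(A_\pi)\operatorname{trace}(B_\pi)$ for positive semidefinite Gram matrices. You are right that the isotypic (rather than merely irreducible) decomposition is the delicate point, and your treatment of it is sound. What each approach buys: the paper's argument needs only the definition of quasirandomness plus elementary linear algebra (singular value decomposition and the Hilbert--Schmidt norm), and is correspondingly shorter; yours is more computational but more transparent about where the gain comes from, exhibiting the contribution of each isotypic component as $d_\pi^{-1}\operatorname{trace}(A_\pi B_\pi)$ and thereby showing exactly when the inequality is close to sharp. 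Both give the same constant $D^{-1/2}$.
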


\begin{proof} Observe that the left-hand side does not change if one subtracts a constant from either $f_1$ or $f_2$, so we may reduce to the case when $f_2$ has mean zero.  By the Cauchy-Schwarz inequality, it thus suffices to show that the linear operator $T = T_{f_2}: L^2(G) \to L^2(G)$ defined by
$$ T_{f_2} f_1(g) := \E_{x \in G} f_1(x) f_2(xg)$$
has operator norm $\|T\|_{\operatorname{op}}$ at most $D^{-1/2} \|f_2\|_{L^2(G)}$.

We may of course assume that $f_2$ is not identically zero.  Let $V$ be the space of functions $f \in L^2(G)$ such that $\| T_{f_2} f \|_{L^2(G)} = \|T\|_{\operatorname{op}} \|f\|_{L^2(G)}$, i.e. the right singular space corresponding to the largest singular value $\|T\|_{\operatorname{op}}$.  This is a vector space which is invariant under the action of right-translation $R_h f(x) := f(xh)$ by elements of $h \in G$, and also does not contain any non-trivial constant functions, and hence by quasirandomness has dimension at least $D$.  As $T$ acts by a multiple of an isometry by $\|T\|_{\operatorname{op}}$ on $V$, we conclude that the Hilbert-Schmidt norm 
$$ \|T\|_{\operatorname{HS}} = (\E_{x,g \in G} |f_2(xg)|^2)^{1/2} = \|f_2\|_{L^2(G)}$$
is at least $D^{1/2} \|T\|_{\operatorname{op}}$, and the desired bound $\|T\|_{\operatorname{op}} \leq D^{-1/2} \|f_2\|_{L^2(G)}$ follows.
\end{proof}

In particular, if $G$ is $D$-quasirandom and $f_1,f_2: G \to \R$ are bounded in magnitude by $1$, we have
$$ \E_{g \in G} |\E_{x \in G} f_1(x) f_2(xg) - (\E_G f_1) (\E_G f_2)| \leq D^{-1/2} $$
and hence by Markov's inequality $\P(|X| \geq \lambda) \leq \frac{1}{\lambda} \E |X|$ we have
$$ |\E_{x \in G} f_1(x) f_2(xg) - (\E_G f_1) (\E_G f_2)| \leq D^{-1/4} $$
for at least $1-D^{-1/4}$ of the elements $g \in G$.  Thus, when $D$ is large, we heuristically have the mixing property $\E_{x \in G} f_1(x) f_2(xg) \approx (\E_G f_1) (\E_G f_2)$ for most group elements $g \in G$.  Specialising to the case of indicator functions $f_1=1_A, f_2=1_B$ of sets, we conclude the heuristic that $\frac{|A \cap Bg|}{|G|} \approx\frac{|A|}{|G|} \frac{|B|}{|G|}$ for most $g \in G$, where $Bg := \{ bg: b \in B \}$ is the right-translate of $B$ by $g$.

\begin{remark}  Proposition \ref{wm} has a counterpart in ergodic theory.  Call an infinite group $G$ \emph{weakly mixing} if it has no non-trivial finite-dimensional representations.  If $G$ is a countable amenable group, one can show that $G$ is weakly mixing if and only if, for every ergodic action $(T_g)_{g \in G}$ of $G$ on a probability space $(X,\mu)$, one has 
$$ \lim_{n \to \infty} \frac{1}{|F_n|} \sum_{g \in F_n} |\int_X f_1 T_g f_2\ d\mu - (\int_X f_1\ d\mu) (\int_X f_2\ d\mu)|  = 0$$
for all $f_1,f_2 \in L^2(X,\mu)$, where $T_g f(x) := f(xg)$ is the right-translate of $f$ by $g$, and $F_1,F_2,\ldots$ is a F{\o}lner sequence in $G$.  See \cite{bf}, \cite{bg}, \cite{br} for further discussion of weak mixing for groups (including the case of non-amenable groups).
\end{remark}

In \cite{gowers}, it was observed that Proposition \ref{wm} could be iterated to obtain similar weak mixing results for some higher order averages; for instance, three applications of the above proposition and the triangle inequality show that
$$ \E_{g,h \in G} |\E_{x \in G} f_1(x) f_2(xg) f_3(xh) f_4(xgh) - (\E_G f_1) (\E_G f_2) (\E_G f_3) (\E_G f_4)| \leq 3 D^{-1/2} $$
whenever $f_1,f_2,f_3,f_4: G \to \R$ have magnitude bounded by $1$.  However, not all multiple averages could be controlled non-trivially in this fashion; for instance, in \cite[\S 6]{gowers}, the task of obtaining a mixing bound for the average
$$ \E_{g \in G} |\E_{x \in G} f_1(x) f_2(xg) f_3(xg^2) - (\E_G f_1) (\E_G f_2) (\E_G f_3)| $$
was posed as an open problem.  Some results for this average will be presented in the forthcoming paper \cite{tao} of the second author, using techniques quite different from those used here.  In this paper, we will focus instead on the weak mixing properties of the average
$$ \E_{x \in G} f_1(x) f_2(xg) f_3(gx)$$
for functions $f_1,f_2,f_3: G \to \R$, which in particular would control the behavior of the density $\frac{|A \cap Bg \cap gC|}{|G|}$ for sets $A,B,C \subset G$ and typical $g \in G$.

As already observed in \cite[\S 6]{gowers}, one cannot hope for absolute weak mixing for this average, due to the simple constraint that $xg$ is conjugate to $gx$.  For instance, if $B$ is the union of some conjugacy classes in $G$ and $C$ is the union of a disjoint collection of conjugacy classes, then $A \cap Bg \cap gC$ is empty for every $g$.  However, our main result asserts, roughly speaking, that these conjugacy classes form the \emph{only} obstruction to weak mixing:

\begin{theorem}[Relative weak mixing]\label{main}  Let $G$ be a $D$-quasirandom finite group for some $D \geq 1$, and let $f_1,f_2,f_3: G \to \R$ be functions bounded in magnitude by $1$.  Then
$$ \E_{g \in G} |\E_{x \in G} f_1(x) f_2(xg) f_3(gx) - (\E_G f_1) (\E_G f_2 \E(f_3|{\mathcal I}_G) ))| \leq c(D)$$
where $\E(f|{\mathcal I}_G)$ is the orthogonal projection of a function $f$ to conjugation invariant functions, thus
$$\E(f|{\mathcal I}_G)(x) := \E_{g \in G} f(gxg^{-1}),$$
and $c(D)$ is a quantity depending only on $D$ that goes to zero as $D \to \infty$.
\end{theorem}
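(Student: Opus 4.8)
The plan is to carry out the program announced in the abstract: first transport the finitary inequality to an infinitary statement about an \emph{ultra quasirandom} group by means of an ultraproduct, and then verify the required mixing in that setting using the semigroup structure of $\beta G$ and, in particular, idempotent ultrafilters.

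\emph{Reduction to the infinitary setting.} I would argue by contradiction. A failure of the theorem would produce an $\eps>0$, finite groups $G_n$ that are $D_n$-quasirandom with $D_n\to\infty$, and functions $f_1^{(n)},f_2^{(n)},f_3^{(n)}:G_n\to[-1,1]$ for which the left-hand side exceeds $\eps$. Taking the ultraproduct $G=\prod_{n\to\alpha}G_n$ along a non-principal ultrafilter $\alpha$ yields a group carrying a translation-invariant Loeb probability measure $\mu$, together with bounded measurable limit functions $f_1,f_2,f_3$; because $D_n\to\infty$, the group $G$ has no non-trivial finite-dimensional unitary representation (it is \emph{ultra quasirandom}), and the operator $\E(\cdot\mid{\mathcal I}_G)$ together with all of the averages appearing in the statement pass to the limit. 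Thus it suffices to show that for an ultra quasirandom $G$ and bounded measurable real $f_1,f_2,f_3$,
\begin{equation*}
 \int_G f_1(x)\,f_2(xg)\,f_3(gx)\,d\mu(x) \;=\; \Big(\int_G f_1\,d\mu\Big)\Big(\int_G f_2\cdot\E(f_3\mid{\mathcal I}_G)\,d\mu\Big) \qquad\text{for $\mu$-a.e.\ $g\in G$.}
\end{equation*}

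\emph{Mixing along a minimal idempotent.} Write $F(g)$ for the inner integral; the change of variables $x=yg^{-1}$ turns it into $\langle (R_{g^{-1}}f_1)\cdot(C_g f_3),\,f_2\rangle_{L^2(\mu)}$, where $R_h f(y):=f(yh)$ and $C_g f(y):=f(gyg^{-1})$. Fix a minimal idempotent $p$ in a minimal left ideal of $\beta G$. For any unitary representation of $G$, the limit of the translates along $p$ taken in the weak operator topology is a contractive idempotent (using $p\cdot p=p$), hence an orthogonal projection; for a minimal $p$ its range is exactly the space of almost periodic vectors, i.e.\ the closed span of the finite-dimensional subrepresentations. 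For right (and likewise left) translation on $L^2(\mu)$, ultra quasirandomness forces that span to be the constants, so the limit of $R_g$ along $p$ is $f\mapsto\int_G f$; for conjugation, a finite-dimensional subrepresentation contained in ${\mathcal I}_G^{\perp}$ would be a non-trivial finite-dimensional unitary representation of $G$ (the trivial one already sits in ${\mathcal I}_G$), so the limit of $C_g$ along $p$ is $f\mapsto\E(f\mid{\mathcal I}_G)$. Equivalently --- via the characterization of weak mixing through the absence of invariant vectors in $U\otimes\overline U$ --- one obtains the exact analogue of Proposition~\ref{wm} for $G$, namely $\int_G|\langle R_g f_1,f_2\rangle-(\int f_1)(\int f_2)|\,d\mu(g)=0$, together with its counterpart for conjugation.

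\emph{Decoupling the triple correlation.} This is the crux, and the step I expect to be the main obstacle. One must show that as $g$ varies the three incarnations of $g$ --- right translation of $f_1$, the untranslated $f_2$, and conjugation of $f_3$ --- become jointly independent, so that $\int_G (R_{g^{-1}}f_1)\cdot(C_g f_3)\,d\mu(g)=(\int_G f_1)\cdot\E(f_3\mid{\mathcal I}_G)$, whence $\int_G F(g)\,d\mu(g)$ equals the value $L:=(\int_G f_1)(\int_G f_2\cdot\E(f_3\mid{\mathcal I}_G))$ on the right of the displayed identity; and, more strongly, $\int_G|F(g)-L|^2\,d\mu(g)=0$, which by Cauchy--Schwarz gives the $\mu$-a.e.\ identity. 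One cannot simply multiply the individual weak limits, and the conjugation factor leaves a genuine residue --- precisely why $\E(f_3\mid{\mathcal I}_G)$, rather than $\int f_3$, appears. I would run a van der Corput / IP-convergence argument in the variable $g$: after replacing $g$ by $gh$ and expanding, the offending correlation is dominated by an average over the difference $h$ of the inner product of the shifted configuration with the original, and --- because $p$ is idempotent, so that iterated limits along $p$ collapse, $\lim_{h\to p}\lim_{g\to p}\Phi(gh)=\lim_{g\to p}\Phi(g)$ --- this reduces, after one or two such passes which peel off $f_1$ and then $f_3$, to the single-parameter mixing statements of the previous step. The contributions of the non-almost-periodic parts vanish, while the almost periodic parts (constant, respectively conjugation-invariant) reassemble into $L$; unwinding the ultraproduct then contradicts the assumed failure. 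The essential difficulty is that $g$ acts in three inequivalent ways at once, producing a higher-order correlation beyond the reach of a naive iteration of Proposition~\ref{wm}, and it is the collapse of iterated idempotent-ultrafilter limits that makes the van der Corput reduction close up cleanly.
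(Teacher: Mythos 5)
Your high-level architecture matches the paper's: reduce by contradiction and an ultraproduct to a statement about an ultra quasirandom group with its Loeb measure, isolate the conjugation-invariant factor ${\mathcal I}_G$ as the source of the correction term, and close the argument with an idempotent-ultrafilter van der Corput step (the paper's Theorem \ref{ergo}, proved via Theorems \ref{iet} and \ref{vdc}). But there is a genuine gap at the crucial middle step, namely your claim that for a minimal idempotent $p \in \beta G$ the weak limit of $R_g$ (resp.\ $C_g$) along $p$ is the projection onto the constants (resp.\ onto ${\mathcal I}_G$). Two things go wrong. First, the limit of a unitary action along an idempotent is the orthogonal projection onto the \emph{$p$-rigid} vectors, not onto the almost periodic vectors, and these can differ drastically: the paper's Theorem \ref{mix}(v) exhibits, for every $a$ in the ultraproduct of the groups $SL_2(F_{p_\n})$, a Loeb set $E$ with $\mu_G(E)=1/2$ and $L_aE=E$, so non-constant rigid functions abound and an idempotent chosen without care (minimal or not) will see them. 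Second, the mixing that quasirandomness actually delivers in the limit object (Lemma \ref{wmi}, and the diagonal analogue Lemma \ref{Drm-basic}) holds only for $\mu_G$-\emph{almost every} $g$, with the exceptional null set depending on the functions; an ultrafilter on the discrete group $G$ has no reason to charge any particular Loeb-conull set, so "mixing along $p$" does not follow. Relatedly, your opening assertion that $D_\n\to\infty$ forces the ultraproduct to have no non-trivial finite-dimensional unitary representations is exactly what the paper says it does \emph{not} know how to prove for general ultra quasirandom groups; what is available, and what the paper uses, is the measure-theoretic weak mixing of the Koopman representations inherited from Proposition \ref{wm}.

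The missing content is the construction that bridges "mixing for a.e.\ $g$" and "mixing along an IP system/idempotent sitting inside the bad set." The paper does this by building the Loeb product space $G^{\N}$ (Theorem \ref{prod}), drawing random generators $g_1,g_2,\dots$, showing the resulting random IP system almost surely inherits left, right, and relative diagonal mixing for a prescribed separable family of functions (Lemmas \ref{lrm}, \ref{Drm}), computing the probability $\mu_G(E)^{2^k-1}$ that a truncated IP system lands in a given positive-measure set $E$ (Lemma \ref{include}), and then invoking countable saturation to extract a \emph{deterministic} mixing IP system contained in the exceptional set $E$ where the conclusion is assumed to fail (Lemma \ref{propo}). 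Only then is the idempotent ultrafilter introduced, supported on that particular IP system, and your van der Corput step becomes the paper's Theorem \ref{ergo}. Without some version of this construction, your final inequality $\int_G|F(g)-L|^2\,d\mu_G(g)=0$ is unsupported: the $p$-limit statements you derive say nothing about $\mu_G$-typical $g$, and a direct expansion of $\int_G F(g)^2\,d\mu_G(g)$ is itself a higher-order correlation not controlled by Proposition \ref{wm}. So the proposal correctly identifies the endpoints of the argument but omits the construction that is, in effect, the main technical contribution of the proof.
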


Thus, for instance, we have the following general bounds on $|A \cap gB \cap Bg|$ for most $g$:

\begin{corollary}\label{main-cor} Let $G$ be a $D$-quasirandom group for some $D \geq 1$, let $\eps > 0$, and let $A,B \subset G$.  Then we have
$$ \frac{|A|}{|G|} \left(\frac{|B|}{|G|}\right)^2 - \eps \leq \frac{|A \cap gB \cap Bg|}{|G|} \leq \frac{|A|}{|G|} \frac{|B|}{|G|} + \eps$$
for all but at most $\eps^{-1} c(D) |G|$ values of $g \in G$, where $c(D)$ goes to zero as $D \to \infty$.  
\end{corollary}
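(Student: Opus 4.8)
The plan is to derive Corollary~\ref{main-cor} directly from Theorem~\ref{main} by specialising to indicator functions. First I would set $f_1 = 1_A$, $f_2 = 1_B$, and $f_3 = 1_B$, all of which are bounded in magnitude by $1$, so that Theorem~\ref{main} applies. The inner average $\E_{x \in G} f_1(x) f_2(xg) f_3(gx)$ is then exactly $\frac{1}{|G|}|\{x : x \in A,\ xg \in B,\ gx \in B\}|$; writing $x = ag^{-1}$ in the first two conditions and $x = g^{-1}b$ in the third shows that $x \in A$, $xg \in B$, $gx \in B$ is equivalent to $x \in A \cap Bg^{-1} \cap g^{-1}B$. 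Replacing $g$ by $g^{-1}$ (which is a bijection of $G$ and changes neither the averaging over $g$ nor the conclusion), the inner average becomes $\frac{|A \cap Bg \cap gB|}{|G|}$, matching the quantity in the corollary.

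Next I would identify the ``main term'' $(\E_G f_1)(\E_G f_2\, \E(f_3 | \mathcal{I}_G))$ appearing in Theorem~\ref{main}. We have $\E_G f_1 = |A|/|G|$ and $\E_G f_2 = |B|/|G|$. For the conjugation-invariant projection, the key observation is that $\E_G(f_2\, \E(f_3|\mathcal I_G))$ can be rewritten: since $\E(f_3|\mathcal I_G)$ is conjugation invariant and the inner product $\langle f_2, \E(f_3|\mathcal I_G)\rangle$ equals $\langle \E(f_2|\mathcal I_G), \E(f_3|\mathcal I_G)\rangle = \langle \E(f_2|\mathcal I_G), f_3\rangle$ by self-adjointness and idempotence of the orthogonal projection onto $\mathcal I_G$. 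With $f_2 = f_3 = 1_B$, this is $\|\E(1_B|\mathcal I_G)\|_{L^2(G)}^2 \geq 0$. Here I would invoke the elementary two-sided bound
$$ \left(\E_G 1_B\right)^2 \;=\; \left(\E_G \E(1_B|\mathcal I_G)\right)^2 \;\leq\; \|\E(1_B|\mathcal I_G)\|_{L^2(G)}^2 \;\leq\; \E_G 1_B \;=\; \frac{|B|}{|G|},$$
where the left inequality is Cauchy--Schwarz (or Jensen) and the right inequality uses that $0 \le \E(1_B|\mathcal I_G) \le 1$ pointwise, so its square is dominated by itself, and averaging commutes with the projection. Hence the main term $(\E_G f_1)(\E_G f_2 \E(f_3|\mathcal I_G))$ lies between $\frac{|A|}{|G|}(\frac{|B|}{|G|})^2$ and $\frac{|A|}{|G|}\frac{|B|}{|G|}$.

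Finally I would combine these ingredients with Markov's inequality. Theorem~\ref{main} gives $\E_{g \in G} |\E_{x} f_1(x) f_2(xg) f_3(gx) - M| \leq c(D)$ where $M$ is the main term just bounded. By Markov's inequality, the number of $g$ for which $|\E_x f_1(x) f_2(xg) f_3(gx) - M| > \eps$ is at most $\eps^{-1} c(D) |G|$. For all remaining $g$, the inner average $\frac{|A \cap gB \cap Bg|}{|G|}$ is within $\eps$ of $M$, hence is at least $M - \eps \geq \frac{|A|}{|G|}(\frac{|B|}{|G|})^2 - \eps$ and at most $M + \eps \leq \frac{|A|}{|G|}\frac{|B|}{|G|} + \eps$, which is exactly the claimed conclusion. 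There is no serious obstacle here: the only point requiring a little care is the rewriting of $\E_G(f_2 \E(f_3|\mathcal I_G))$ as $\|\E(1_B|\mathcal I_G)\|_{L^2}^2$ and the verification of its two-sided bound, together with the cosmetic substitution $g \mapsto g^{-1}$ needed to align the sets $A \cap Bg \cap gB$ with the product $f_1(x)f_2(xg)f_3(gx)$.
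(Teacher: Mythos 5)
Your proof is correct, and your treatment of the lower bound coincides with the paper's: both set $f_1=1_A$, $f_2=f_3=1_B$, substitute $g\mapsto g^{-1}$, and bound the main term from below via $\E_G\bigl(1_B\,\E(1_B|{\mathcal I}_G)\bigr)=\|\E(1_B|{\mathcal I}_G)\|_{L^2(G)}^2\geq(\E_G 1_B)^2$ using self-adjointness and idempotence of the projection together with Cauchy--Schwarz. Where you genuinely diverge is the upper bound. The paper does not extract it from Theorem \ref{main} at all: it crudely bounds $|A\cap gB\cap Bg|\leq |A\cap Bg|$ and then applies the elementary weak mixing estimate of Proposition \ref{wm}, which yields the explicit, polynomial rate $c(D)=D^{-1/2}$ (up to the Markov step) for that half of the inequality. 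You instead stay entirely within Theorem \ref{main} and observe that $0\leq\E(1_B|{\mathcal I}_G)\leq 1$ pointwise forces $\|\E(1_B|{\mathcal I}_G)\|_{L^2(G)}^2\leq\E_G 1_B$, so the single main term $M$ is sandwiched between $\frac{|A|}{|G|}(\frac{|B|}{|G|})^2$ and $\frac{|A|}{|G|}\frac{|B|}{|G|}$, and one application of Markov's inequality produces a single exceptional set outside of which both inequalities hold. Your route is slightly more unified and economical (one exceptional set instead of two), while the paper's route makes the upper bound quantitatively effective, independent of the ineffective $c(D)$ coming from the ultrafilter machinery behind Theorem \ref{main}; since the corollary only asserts some $c(D)\to 0$, both arguments are perfectly adequate.
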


\begin{proof}  The upper bound follows from Proposition \ref{wm} with $f_1 := 1_A$ and $f_2 := 1_B$ (replacing $g$ by $g^{-1}$), after crudely bounding $|A \cap gB \cap Bg|$ by $|A \cap Bg|$.  Now we turn to the lower bound.  By setting $f_1 := 1_A$ and $f_2 = f_3 := 1_B$ (and replacing $g$ by $g^{-1}$), this bound is immediate from Theorem \ref{main} and Markov's inequality once one verifies that 
\begin{equation}\label{egb}
 \E_G 1_B \E(1_B|{\mathcal I}_G) \geq \left(\frac{|B|}{|G|}\right)^2.
\end{equation}
But this follows from the identities
\begin{align*}
 \E_G 1_B \E(1_B|{\mathcal I}_G) &= \E_G \E(1_B|{\mathcal I}_G) \E(1_B|{\mathcal I}_G)\\
 \E_G \E(1_B|{\mathcal I}_G) &= \frac{|B|}{|G|}
\end{align*}
and the Cauchy-Schwarz inequality.
\end{proof}

Note from Theorem \ref{main} that apart from improvements in the $\eps^{-1} c(D)$ factor, both bounds in the above corollary are best possible without further hypotheses on $B$, with the lower bound essentially attained when $B$ is approximately evenly distributed among all (or almost all) conjugacy classes, and the upper bound essentially attained when $B$ is the union of conjugacy classes.

One can also view Theorem \ref{main} more probabilistically:

\begin{corollary}[Relative weak mixing, again]\label{main-again}  Let $G$ be a $D$-quasirandom finite group for some $D \geq 1$.  Let $x,g$ be drawn uniformly at random from $G$.  Let $x_0,x_1,x_2,x_3$ be further random variables drawn from $G$, with $x_0,x_1,x_2$ drawn uniformly and independently at random from $G$, and for each choice of $x_0,x_1,x_2$, the random variable $x_3$ is drawn uniformly from the conjugacy class of $x_2$.  (Equivalently, one could take $x_3 := hx_2 h^{-1}$, where $h$ is drawn uniformly from $G$ independently of $x_0,x_1,x_2$).  Then the random variables $(g,x,xg,gx)$ and $(x_0,x_1,x_2,x_3)$ in $G^4$ are close in the following weak sense: whenever $f_0,f_1,f_2,f_3: G \to \R$ are functions bounded in magnitude by $1$, then
$$ |\E f_0(g) f_1(x) f_2(xg) f_3(gx) - \E f_0(x_0) f_1(x_1) f_2(x_2) f_3(x_3)| \leq c(D)$$
where $c(D) \to 0$ as $D \to \infty$.
\end{corollary}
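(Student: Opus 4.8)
The plan is to deduce Corollary \ref{main-again} directly from Theorem \ref{main}, with no new ingredients: one simply conditions on the appropriate variables and unwinds the definitions.

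First I would rewrite the ``model'' expectation on the right-hand side. Since $x_0,x_1$ are independent of $(x_2,x_3)$, it factors as $(\E_G f_0)(\E_G f_1)\, \E[ f_2(x_2) f_3(x_3)]$. For the remaining factor, I would use the description $x_3 = h x_2 h^{-1}$ with $h$ uniform and independent of $x_2$, noting that this is genuinely equivalent to $x_3$ being uniform on the conjugacy class of $x_2$, since for each fixed $x_2$ the map $h \mapsto h x_2 h^{-1}$ pushes the uniform measure on $G$ forward to the uniform measure on that conjugacy class (all fibers having size $|C_G(x_2)|$). Conditioning on $x_2$ then gives $\E[f_3(x_3)\mid x_2] = \E_{h \in G} f_3(h x_2 h^{-1}) = \E(f_3|{\mathcal I}_G)(x_2)$, and hence $\E[ f_2(x_2) f_3(x_3)] = \E_G f_2\, \E(f_3|{\mathcal I}_G)$. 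So the right-hand side equals $(\E_G f_0)(\E_G f_1)(\E_G f_2\, \E(f_3|{\mathcal I}_G))$.

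Next, setting $F(g) := \E_{x \in G} f_1(x) f_2(xg) f_3(gx)$, the left-hand side expectation is exactly $\E_{g\in G} f_0(g) F(g)$. Since $\E_{g\in G} f_0(g) = \E_G f_0$, the difference of the two sides equals $\E_{g\in G} f_0(g)\bigl( F(g) - (\E_G f_1)(\E_G f_2\, \E(f_3|{\mathcal I}_G)) \bigr)$. As $|f_0| \le 1$ pointwise, the triangle inequality bounds the absolute value of this by $\E_{g\in G} \bigl| F(g) - (\E_G f_1)(\E_G f_2\, \E(f_3|{\mathcal I}_G)) \bigr|$, which is at most $c(D)$ by Theorem \ref{main} (applied to $f_1,f_2,f_3$, all of which are $\R$-valued and bounded by $1$ as required). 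This gives the claimed bound.

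Since the argument is entirely bookkeeping, I do not expect any real obstacle; the only steps requiring a little care are the verification that the conditional expectation $\E[f_3(x_3)\mid x_2]$ coincides with the conjugation-averaging projection $\E(f_3|{\mathcal I}_G)(x_2)$, and the (essentially trivial) observation that the factor $f_0$ can be absorbed without loss because it is bounded by $1$ and its mean factors cleanly out of the main term.
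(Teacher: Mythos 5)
Your proof is correct and is precisely the ``routine computation'' that the paper leaves to the reader: factoring the model expectation using independence and the identity $\E[f_3(x_3)\mid x_2] = \E(f_3|{\mathcal I}_G)(x_2)$, then absorbing the bounded factor $f_0(g)$ inside the $g$-average and applying Theorem \ref{main}. No issues.
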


The equivalence of Corollary \ref{main-again} and Theorem \ref{main} follows from a routine computation which we leave to the reader.  Informally, this corollary asserts that in a quasirandom group, the only significant constraint on the tuple $(x,g,xg,gx)$ for random $x,g$ is the obvious constraint that $xg$ and $gx$ are conjugate to each other, at least for the purposes of computing ``order $1$ statistics'' such as $\E f_0(x) f_1(x) f_2(xg) f_3(gx)$ involving products of quantities, each of which only involve at most one of the expressions $x,g,xg,gx$.

We can also easily obtain the following combinatorial consequence of the above results, which can be viewed as a density version of the non-commutative Schur theorem from \cite{bc2}, in the case of quasirandom groups:

\begin{corollary}[Density noncommutative Schur theorem]\label{dst} Let $G$ be a $D$-quasirandom finite group for some $D \geq 1$.  Let $A, B, C \subset G$ with $|A|, |B|, |C| \geq \delta |G|$ for some $\delta>0$.  Then, if $D$ is sufficiently large depending on $\delta$, there exists $g \in A, x \in B$ with $xg, gx \in C$ and $x,g,xg,gx$ all distinct.
\end{corollary}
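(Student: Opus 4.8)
The plan is to read this off directly from Corollary \ref{main-again} (equivalently, Theorem \ref{main}) applied to the indicator functions $f_0 := 1_A$, $f_1 := 1_B$, $f_2 := f_3 := 1_C$, which are all bounded in magnitude by $1$. With this choice the model expectation $\E f_0(x_0) f_1(x_1) f_2(x_2) f_3(x_3)$ factors: since $x_0,x_1$ are independent of the pair $(x_2,x_3)$, it equals $\frac{|A|}{|G|}\cdot\frac{|B|}{|G|}\cdot \E 1_C(x_2) 1_C(x_3)$, and writing $x_3 = h x_2 h^{-1}$ and averaging over $h$ first identifies $\E 1_C(x_2) 1_C(x_3)$ with $\E_G 1_C \E(1_C|{\mathcal I}_G)$, which is at least $(|C|/|G|)^2 \geq \delta^2$ by \eqref{egb}. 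Hence the model expectation is at least $\delta^4$, and Corollary \ref{main-again} gives
$$ \E_{g,x \in G} 1_A(g) 1_B(x) 1_C(xg) 1_C(gx) \geq \delta^4 - c(D).$$

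The next step is to discard the pairs $(g,x)$ for which $x,g,xg,gx$ fail to be all distinct. Checking the possible coincidences (for instance $x = xg \iff g = e$, $g = gx \iff x = e$, while $xg = gx$ means $x$ and $g$ commute), one sees that the degenerate set is contained in $\{g = e\}\cup\{x=e\}\cup\{x=g\}\cup\{xg=gx\}$. The first three events involve at most $3|G|$ pairs. For the commuting pairs one invokes the classical identity $\#\{(g,x): gx = xg\} = \sum_{g \in G} |C_G(g)| = k(G)|G|$, where $k(G)$ is the number of conjugacy classes of $G$, together with the quasirandomness bound $k(G) \leq 1 + (|G|-1)/D^2$, which follows from $\sum_\rho (\dim\rho)^2 = |G|$ and the fact that every non-trivial irreducible representation of $G$ has dimension at least $D$. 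Since a $D$-quasirandom group with $D \geq 2$ has $|G| \geq 1 + D^2$, the degenerate set has size at most $(4 + |G|/D^2)|G| \leq 5|G|^2/D^2$.

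Combining the two estimates, the number of pairs $(g,x)$ with $g \in A$, $x \in B$, $xg,gx \in C$ and $x,g,xg,gx$ all distinct is at least $(\delta^4 - c(D) - 5/D^2)|G|^2$, which is positive as soon as $D$ is large enough, depending only on $\delta$, that $c(D) < \delta^4/3$ and $5/D^2 < \delta^4/3$; any such pair exhibits the desired configuration. I do not anticipate a serious obstacle here, since the substantive content is already contained in Theorem \ref{main}: the only point requiring a little care is verifying that the non-distinctness locus — most notably the set of commuting pairs — is negligible, which is where quasirandomness enters a second time, via the bound on the number of conjugacy classes.
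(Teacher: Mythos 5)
Your proposal is correct and follows essentially the same route as the paper: apply Corollary \ref{main-again} with the indicator functions, lower-bound the model expectation by $\delta^4$ via \eqref{egb}, and then discard the degenerate pairs by bounding the number of commuting pairs through the conjugacy-class count $k(G) \leq 1 + (|G|-1)/D^2$ coming from $\sum_\rho d_\rho^2 = |G|$. The only cosmetic difference is that you invoke the centralizer identity directly where the paper cites Burnside's lemma, and you track the constants slightly more explicitly.
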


\begin{proof}  Let $x,g$ be drawn uniformly at random from $G$, and let $x_0,x_1,x_2,x_3$ be drawn as in Corollary \ref{main-again}.  We have
$$ \E 1_A(x_0) 1_B(x_1) 1_C(x_2) 1_C(x_3) = (|A|/|G|) (|B|/|G|)  \E_G (1_C \E(1_C|{\mathcal I}_G))$$
and hence by \eqref{egb}
$$ \E 1_A(x_0) 1_B(x_1) 1_C(x_2) 1_C(x_3) \geq \delta^4.$$
By Corollary \ref{main-again}, we thus have (for $D$ large enough) that
$$ \E 1_A(g) 1_B(x) 1_C(xg) 1_C(gx) \geq \delta^4/2,$$
thus there are at least $\delta^4 |G|^2/2$ tuples $(g,x,xg,gx)$ with $g \in A$, $x \in B$, and $xg, gx \in C$.

Now we eliminate those tuples in which $g,x,xg,gx$ are not all distinct.  Clearly there are at most $O(|G|)$ tuples for which $g=x$ or for which one of $g,x$ is equal to one of $xg,gx$.  Now we consider those tuples for which $xg=gx$.  By Burnside's lemma, the number of such tuples is equal\footnote{This observation dates back to \cite{erdos}.} to $|G|$ times the number of conjugacy classes of $G$.  But from the Peter-Weyl theorem, the number of conjugacy classes is also equal to the number of non-isomorphic irreducible unitary representations $\rho: G \to U_{d_\rho}(\C)$ of $G$, which obey the identity 
$$ \sum_\rho d_\rho^2 = |G|.$$
If $G$ is $D$-quasirandom, then $d_\rho \geq D$ for all non-trivial $\rho$, and so we see that the number of pairs $x,g$ with $xg=gx$ is at most
$$ |G| (1 + \frac{|G|-1}{D^2}) \leq \frac{|G|^2}{D^2} + 1.$$
Putting everything together, we see that there are at most $O(|G|^2/|D|^2) + O(|G|)$ tuples for which $x,g,xg,gx$ are not all distinct.  Since $|G| \geq D$ (as can be seen by considering the regular representation of $G$), the claim then follows if $D$ is sufficiently large depending on $\delta$.
\end{proof}

In order to prove Theorem \ref{main}, we will introduce a new version of the Furstenberg correspondence principle adapted to sequences of finite quasirandom groups, which is based on the ultraproduct construction used in nonstandard analysis.  As with the usual correspondence principle, this construction will allow us to deduce the combinatorial mixing result in Theorem \ref{main} from a corresponding mixing result in ergodic theory.  A key feature of the construction is that significant vestiges of the mixing property from Proposition \ref{wm} are retained in the measure-preserving system that one studies on the ergodic theoretic side of the correspondence principle.  On the other hand, the group that acts in this setting is not expected to be amenable, instead behaving more like the free nonabelian group.  Fortunately, there is an existing tool in the literature for exploiting mixing properties for non-amenable groups, namely the machinery of idempotent ultrafilters. We will  introduce the necessary definitions in later sections. For more information on ultrafilters and their use in ergodic theory see \cite{berg1, berg2, berg3, berg4}.

Due to the repeated use of infinitary techniques (including two completely separate and unrelated uses of ultrafilters), our arguments do not give an effective\footnote{The most infinitary step in the arguments involve the usage of idempotent ultrafilters, which are closely related to Hindman's theorem \cite{hind} in infinitary Ramsey theory.  It may be possible to use some suitable finitizations of Hindman's theorem as a substitute for the tool of idempotent ultrafilters to eventually obtain some (incredibly poor) quantitative decay rate for $c(D)$, but we do not pursue this issue here.}
 bound on the rate of decay of $c(D)$ as $D \to \infty$.  In particular, we do not know if one can obtain a polynomial rate of decay in $D$, in analogy with Proposition \ref{wm}.  

In order to illustrate the general ultraproduct correspondence principle strategy, we also give a significantly simpler and weaker recurrence result which does not assume quasirandomness, but only establishes recurrence for the pattern $(x,xg,gx)$ rather than $(g,x,xg,gx)$:

\begin{theorem}[Double recurrence]\label{loaded}  For every $\delta>0$, there exists $N > 0$ and $\eps>0$ such that the following statement holds: if $G$ is a finite group of cardinality at least $N$, and $A$ is a subset of $G$ with $|A| \geq \delta |G|$, then there exists a non-trivial group element $g \in G$ such that $|A \cap g^{-1} A \cap A g^{-1}| \geq \eps |G|$.  In particular, there exists a non-trivial $g \in G$ and $x \in G$ such that $x, xg, gx \in A$.
\end{theorem}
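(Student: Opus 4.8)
The plan is to establish the contrapositive through the ultraproduct correspondence principle announced in the introduction, which recasts the statement as a multiple recurrence assertion for a single, generally non-amenable, group action, and then to prove that assertion with idempotent ultrafilters. So suppose the conclusion fails: there is a $\delta>0$ such that for every $n\in\N$ one can choose a finite group $G_n$ with $|G_n|\ge n$ and a set $A_n\subseteq G_n$ with $|A_n|\ge\delta|G_n|$ for which $|A_n\cap g^{-1}A_n\cap A_ng^{-1}|<\frac1n|G_n|$ for all non-trivial $g\in G_n$. Fix a non-principal ultrafilter on $\N$ and form the ultraproduct group $G:=\prod_n G_n$: it is infinite, it carries a Loeb probability measure $\mu$ built from the normalized counting measures on the $G_n$, and it acts on $(G,\mu)$ by left translations, right translations, and conjugations, all measure-preserving. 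The internal set $A:=\prod_n A_n$ satisfies $\mu(A)\ge\delta$, and since any $g=(g_n)_n\neq1$ in $G$ has $g_n\neq1$ for almost every $n$, the internal set $A\cap g^{-1}A\cap Ag^{-1}$ is the ultraproduct of the sets $A_n\cap g_n^{-1}A_n\cap A_ng_n^{-1}$; passing to the ultralimit of the hypothesis therefore gives $\mu(A\cap g^{-1}A\cap Ag^{-1})=0$ for every non-trivial $g\in G$. (The ``in particular'' clause of the theorem follows at once from the main clause.)

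It now suffices to contradict this, i.e.\ to exhibit a non-trivial $g\in G$ with $\mu(A\cap g^{-1}A\cap Ag^{-1})>0$. To make non-triviality automatic I would use an idempotent ultrafilter: since $G$ is infinite, the set $G^*:=\beta G\setminus G$ of non-principal ultrafilters on $G$ is a non-empty closed subsemigroup of $(\beta G,\cdot)$, so by Ellis's theorem it contains an idempotent $q=q\cdot q$, which, being non-principal, contains $G\setminus\{1\}$. On $\mathcal H:=L^2(G,\mu)$ introduce the commuting families of unitaries $\lambda_g\colon f\mapsto f(g\,\cdot\,)$ and $\rho_g\colon f\mapsto f(\,\cdot\,g)$; each is pointwise-multiplicative and fixes the constant function $1$ (and $\rho$ is a homomorphism, $\lambda$ an anti-homomorphism, in $g$), and $\mu(A\cap g^{-1}A\cap Ag^{-1})=\langle 1_A,\,(\lambda_g1_A)(\rho_g1_A)\rangle$. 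Hence it is enough to prove
$$\lim_{g\to q}\ \big\langle 1_A,\ (\lambda_g1_A)(\rho_g1_A)\big\rangle\ >\ 0,$$
for then this quantity is positive on a $q$-large, hence non-empty, set of non-trivial $g$.

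For the recurrence I would begin from the standard idempotent-ultrafilter fact (see the references on ultrafilters in ergodic theory cited in the introduction): for a unitary representation $g\mapsto W_g$ and an idempotent $q$, the weak-operator limit $P:=\lim_{g\to q}W_g$ is an orthogonal projection whose range contains every $W$-invariant vector, so that $\lim_{g\to q}\langle w,W_gw\rangle=\|Pw\|^2\ge|\langle w,1\rangle|^2$ for all $w\in\mathcal H$; applied to left translations, right translations, and conjugations, this yields the three corresponding single recurrences with lower bound $\mu(A)^2$. To reach the genuine double pattern I would then run a van der Corput argument for ultrafilter limits: the weak $q$-limit of the vectors $(\lambda_g1_A)(\rho_g1_A)$ is controlled by the differenced inner products $\lim_{g\to q}\langle(\lambda_g1_A)(\rho_g1_A),\,(\lambda_{hg}1_A)(\rho_{hg}1_A)\rangle$ with $h$ ranging along $q$, and expanding these — using that the $\lambda_g,\rho_g$ are multiplicative and (anti)multiplicative in $g$ — collapses each to an inner product again of the single-variable shape $\lim_{g\to q}\langle F,\,(\lambda_gF')(\rho_gF'')\rangle$, but with $1_A$ replaced by bounded functions obtained from $1_A$ through boundedly many translations and through conjugations of the form $g\mapsto g^{-1}hg$. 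A PET-style induction on the complexity of these conjugation twists terminates, with the projection fact handling the base case, and delivers $\lim_{g\to q}\mu(A\cap g^{-1}A\cap Ag^{-1})\ge\mu(A)^{O(1)}>0$. (Alternatively, one could simply quote an IP multiple recurrence theorem for two commuting measure-preserving actions.)

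The main obstacle is precisely this last step: running the van der Corput/PET machinery with only a single idempotent ultrafilter standing in for a F{\o}lner averaging scheme — which is unavoidable here, since the limiting group $G$ behaves like a free non-abelian group and is not amenable — and verifying both that the conjugation twists created by differencing stay within a controllable family and that the resulting induction closes. This is exactly where idempotent ultrafilters become indispensable, and it is also the source of the absence of any effective dependence of $\eps$ on $\delta$.
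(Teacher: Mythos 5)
Your first half --- the contrapositive, the ultraproduct $G=\prod_{\n\to\alpha}G_\n$ with its Loeb measure, the bound $\mu(A)\ge\delta$, and the transfer of the finitary hypothesis to $\mu(A\cap g^{-1}A\cap Ag^{-1})=0$ for every non-trivial $g\in G$ --- is exactly the paper's argument in Section \ref{lo-sec}, and your closing parenthetical (``simply quote an IP multiple recurrence theorem for two commuting measure-preserving actions'') is precisely what the paper does: it restricts to a countable subgroup generated by the relevant elements and invokes \cite[Theorem 1.5]{berg4}, which does not require amenability.

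The gap is in your primary route to the infinitary recurrence statement. The van der Corput lemma for idempotent ultrafilters is a \emph{vanishing} criterion: it shows that a weak $q$-limit is zero when the differenced inner products vanish along $q$. It can therefore only obliterate a ``mixing'' component of $1_A$; it cannot by itself produce a positive lower bound on $\lim_{g\to q}\langle 1_A,(\lambda_g1_A)(\rho_g1_A)\rangle$. To get positivity one must decompose $1_A$ relative to the appropriate characteristic factors (functions rigid along $q$ for the left, right, and diagonal/conjugation actions), kill the orthogonal complements by van der Corput, and then prove a positivity statement for the structured part --- and that last step is the actual content of the non-commutative Roth theorem, which your sketch does not supply; ``the projection fact handling the base case'' gives single recurrence only. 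Note also that in the paper's own van der Corput argument (Theorem \ref{ergo}) the differenced inner products collapse only because of the left and right mixing hypotheses (i) and (ii), which are supplied by quasirandomness; Theorem \ref{loaded} assumes no quasirandomness, so those hypotheses are unavailable here and your ``PET-style induction'' has nothing to terminate on. The asserted lower bound $\mu(A)^{O(1)}$ is likewise unsupported: the known arguments give positivity with no effective dependence on $\mu(A)$ (consistent with the paper's remark that no effective bounds emerge). Either flesh out the structured-part positivity argument --- essentially reproving \cite[Theorem 1.5]{berg4} --- or take your own fallback and cite that theorem, as the paper does.
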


We prove this theorem in Section \ref{lo-sec}.   It may be compared with \cite[Corollary 6.5]{berg-amenable}, which with the same hypotheses produces a non-trivial $g \in G$ and $x \in G$ such that $x, gx, xg^{-1} \in A$.

Actually, we can improve Theorem \ref{loaded} slightly:

\begin{theorem}[Double strong recurrence]\label{loaded-2}  For every $\delta>0$, there exists $\eps>0$ such that the following statement holds: if $G$ is a finite group, and $A$ is a subset of $G$ with $|A| \geq \delta |G|$, then there exist at least $\eps |G|^2$ pairs $(g,x) \in G \times G$ such that $x, xg, gx \in A$.
\end{theorem}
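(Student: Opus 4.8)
The plan is to reduce Theorem \ref{loaded-2} to the ultra-group recurrence that already underlies Theorem \ref{loaded}, after first dispatching small groups by a triviality. Write $\beta(G,A) := \E_{g,x\in G} 1_A(x)1_A(xg)1_A(gx)$, so that the number of pairs to be produced is exactly $\beta(G,A)|G|^2$. If $|G| \le N$ for a threshold $N = N(\delta)$ to be fixed below, then taking $g$ to be the identity and $x$ to range over $A$ already gives $|A| \ge \delta|G| \ge \frac{\delta}{N}|G|^2$ valid pairs (each with $x = xg = gx \in A$); so it suffices to bound $\beta(G,A)$ below for $|G| > N$, and we may take $N$ as large as we like in terms of $\delta$.

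So suppose, for contradiction, that $\beta(G,A)$ is not bounded below over finite groups with $|G|$ large and $A \subseteq G$ with $|A| \ge \delta|G|$: since $\beta$ takes only finitely many (positive) values on groups of any bounded size, there are then $G_n$ with $|G_n| \to \infty$, and $A_n \subseteq G_n$ with $|A_n| \ge \delta|G_n|$, such that $\beta(G_n,A_n) \to 0$. I would then pass to an ultraproduct: fixing a non-principal ultrafilter on $\N$, form the ultraproduct group $\G = \prod_n G_n$, equipped with the Loeb probability measure $\mu$ (the standard part of the internal normalised counting measure), on which left and right translation act by measure-preserving transformations, and set $A := \prod_n A_n$, so that $\mu(A) \ge \delta$. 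Since $|G_n| \to \infty$, every singleton of $\G$ is $\mu$-null. Transferring via {\L}o{\'s}'s theorem and a Keisler--Fubini argument on the product Loeb space yields
$$\int_\G \int_\G 1_A(x)1_A(xg)1_A(gx)\,d\mu(x)\,d\mu(g) = 0;$$
substituting $h = xg$ in the inner integral and using measure-invariance, the left-hand side equals $\int_\G 1_A(x)\,\mu(A \cap xAx^{-1})\,d\mu(x)$. In particular $\mu(A \cap g^{-1}A \cap Ag^{-1}) = 0$ for $\mu$-almost every $g$, and (as $\{e\}$ is null) the identity element rescues nothing.

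It remains to contradict this, and here the substantive input is the recurrence argument of Section \ref{lo-sec}. Note first that the analogue without the weight $1_A(x)$ is easy: $\int_\G \mu(A \cap xAx^{-1})\,d\mu(x) = \int_\G 1_A \, \E(1_A|{\mathcal I}_\G)\,d\mu = \|\E(1_A|{\mathcal I}_\G)\|_{L^2(\G,\mu)}^2 \ge \mu(A)^2 > 0$, by conjugation-averaging and Cauchy--Schwarz exactly as in \eqref{egb}. The whole difficulty is therefore the weight $1_A(x)$, which ties the three copies of $A$ together through a non-commutative, non-amenable group action, so that the Proposition \ref{wm}-style mixing used elsewhere — unavailable here, since quasirandomness is not assumed — cannot be invoked. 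This is precisely the situation the idempotent-ultrafilter (IP-recurrence) machinery is built for: applied to the translation action of $\G$ on $(\G,\mu)$ it produces, for any positive-measure $A$, not just a single recurrence element but a set of $g \in \G$ of positive $\mu$-measure along which $\int_\G 1_A(x)1_A(xg)1_A(gx)\,d\mu(x) > 0$ — equivalently, it forces the double integral above to be strictly positive, contradicting the previous paragraph. It follows that $\beta(G,A) \ge \eps'(\delta) > 0$ for all large $G$, and the theorem holds with $\eps := \min(\delta/N,\eps'(\delta))$.

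The step I expect to be the main obstacle is the invocation of idempotent-ultrafilter recurrence in the last paragraph, and specifically the upgrade from ``some recurrence element $g$ exists'' (which is all Theorem \ref{loaded} requires) to ``a positive-measure set of recurrence elements exists'' (which the counting conclusion $\eps|G|^2$ of Theorem \ref{loaded-2} requires). Everything else — the small-group reduction, the ultraproduct transfer, and the Fubini and substitution bookkeeping — is routine once the framework of Section \ref{lo-sec} is in place.
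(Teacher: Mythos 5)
Your reduction is sound as far as it goes: the small-group triviality (taking $g=e$), the compactness-and-contradiction setup, the passage to the ultraproduct group $\G$ with Loeb measure, and the Fubini--Tonelli bookkeeping reducing everything to the claim that $\int_{\G\times\G} 1_A(x)1_A(xg)1_A(gx)\,d\mu_{\G\times\G}>0$ whenever $\mu(A)>0$ all match the paper's derivation of Theorem \ref{loaded-2} from Theorem \ref{loaded-2-ultra}. You have also correctly isolated where the difficulty lives (the weight $1_A(x)$, which the conjugation-averaging/Cauchy--Schwarz computation cannot absorb). But the step you flag as the ``main obstacle'' is a genuine gap, not a technicality, and the tool you propose cannot close it. The idempotent-ultrafilter recurrence underlying Proposition \ref{p1} (i.e.\ \cite[Theorem 1.5]{berg4}) applies to \emph{countable} groups: one first passes to a countable subgroup $H \leq \G$, and the conclusion is that the return set is syndetic (indeed central*) \emph{in $H$}. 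Every countable subset of $\G$ is Loeb-null, and syndeticity inside $H$ --- finitely many $H$-translates of the return set covering $H$ --- says nothing about covering $\G$, hence gives no lower bound on Loeb measure. Upgrading ``IP-many return times'' to ``a positive-measure set of return times'' is exactly the content of Section \ref{ultrasec}, and that upgrade leans essentially on quasirandomness via Proposition \ref{wm} (random generators mix); without a quasirandomness hypothesis --- e.g.\ for $\G$ an ultraproduct of groups with large abelianizations --- no such mixing is available, and the asserted positive-measure return set is precisely the statement to be proved, not a consequence of known IP machinery.

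What the paper actually uses to prove the positivity is a completely different input: the change of variables $(x,g)=(ab,bc^{-1}a)$ in \eqref{form} rewrites the count as
$$ |G|^2 \int_{G^3} 1_A(ab)\, 1_A(aca^{-1})\, 1_A(bcb^{-1})\ d\mu_{G^3}(a,b,c),$$
in which each factor depends on only two of the three variables $a,b,c$; this is a triangle count in a tripartite graph, and the lower bound then follows from the Ruzsa--Szemer\'edi triangle removal lemma (Lemma \ref{tri}), or in the ultraproduct formulation from Lemma \ref{ulrt}, since the $\ge \delta|G|^2$ ``diagonal'' triangles coming from pairs $(a,b)$ with $ab\in A$ are edge-disjoint. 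This regularity-lemma ingredient is what replaces mixing in the absence of quasirandomness, and it is the piece your proposal is missing. To repair your argument you would either need to supply a proof that the return set has positive Loeb measure in the uncountable group $\G$ (which I do not believe the idempotent-ultrafilter theory provides), or substitute the removal-lemma argument at this point --- at which stage you would have reproduced the paper's proof.
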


We give two proofs of this theorem in Section \ref{lo2-sec}.  One proof is measure-theoretic in nature (and similar in spirit to the ergodic theory methods).  The other proof is combinatorial, relying on the triangle removal lemma of Ruzsa and Szemer\'edi \cite{rsz}.  However, we do not know how to use such methods to establish Theorem \ref{main} or Corollary \ref{main-cor}.  As with the proof of Corollary \ref{dst}, we can ensure that $x,xg,gx$ are all distinct if one assumes a sufficient amount\footnote{\label{foot}The groups $G$ with few commuting pairs $\{ (g,x) \in G \times G: xg = gx \}$ were qualitatively classified in \cite{neumann}.  Roughly speaking, the necessary and sufficient condition that $G$ has $o(|G|^2)$ such pairs is that $G$ does not have a bounded index subgroup whose commutator also has bounded index.  Note this is a much weaker property than quasirandomness, for which the argument used to prove Corollary \ref{dst} may be applied. See also \cite{guralnick} for some more quantitatively precise characterizations of groups with many commuting pairs.} of non-commutativity in the group $G$.  Of course, if $G$ is abelian, then $xg=gx$ and Theorems \ref{loaded}, \ref{loaded-2} become trivial.  

As we shall show in Section \ref{lo2-sec}, the combinatorial proof of Theorem \ref{loaded-2} generalizes to give a multidimensional version:

\begin{theorem}[Multiple strong recurrence]\label{loaded-4}  Let $k \geq 1$ be an integer.  For every $\delta>0$, there exists $\eps>0$ such that the following statement holds: if $G$ is a finite group, and $A$ is a subset of $G^k$ with $|A| \geq \delta |G|^k$, then there exist at least $\eps |G|^{k+1}$ tuples $(g,x_1,\ldots,x_k) \in G^{k+1}$ such that
$$ (gx_1,\ldots,gx_{i},x_{i+1},\ldots,x_k) \in A$$
for all\footnote{We ignore the block $gx_1,\ldots,gx_i$ when $i=0$ and ignore the block $x_{i+1},\ldots,x_k$ when $i=k$; thus we interpret $(gx_1,\ldots,gx_{i},x_{i+1},\ldots,x_k)$ as $(x_1,\ldots,x_k)$ in the case $i=0$, and $(gx_1,\ldots,gx_k)$ in the case $i=k$.} $i=0,\ldots,k$, and also
$$ (gx_1g^{-1},\ldots,gx_kg^{-1}) \in A.$$
\end{theorem}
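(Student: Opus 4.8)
The plan is to deduce Theorem \ref{loaded-4} from the hypergraph removal lemma, paralleling the combinatorial (triangle removal) proof of Theorem \ref{loaded-2}, which is the case $k=1$. Write $n:=|G|$. Theorem \ref{loaded-4} asks for $\gtrsim_\delta n^{k+1}$ tuples $(g,x_1,\ldots,x_k)$ satisfying a system of $k+2$ membership conditions $y_0,\ldots,y_{k+1}\in A$, where $y_i:=(gx_1,\ldots,gx_i,x_{i+1},\ldots,x_k)\in G^k$ for $i=0,\ldots,k$ and $y_{k+1}:=(gx_1g^{-1},\ldots,gx_kg^{-1})$. First I would recast this system as the requirement that a configuration of $k+2$ points in $A$ forms a ``simplex'': one builds a $(k+2)$-partite $(k+1)$-uniform hypergraph $H$ on vertex classes $V_0,\ldots,V_{k+1}$, each a copy of $G$, so that a copy of the complete $(k+1)$-uniform hypergraph $K^{(k+1)}_{k+2}$ in $H$ — a choice of one vertex $v_j\in V_j$ for each $j$ making all $k+2$ facets $\{v_i:i\ne j\}$ into hyperedges — encodes exactly a tuple $(g,x_1,\ldots,x_k)$ satisfying the $k+2$ conditions. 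For $k=1$ this is a tripartite graph and copies of $K^{(2)}_3$ are ordinary triangles, recovering the Ruzsa--Szemer\'edi \cite{rsz} style gadget behind Theorem \ref{loaded-2}.

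In more detail, the construction should come with a bijective parameterisation $(g,x_1,\ldots,x_k,t)\mapsto(v_0,\ldots,v_{k+1})\in G^{k+2}$ by the data of a tuple together with one auxiliary ``base point'' $t\in G$, and the hyperedges should be chosen so that the facet obtained by deleting $V_j$ is a hyperedge precisely when the element of $G^k$ it determines lies in $A$ — with the crucial property that this element depends on the $k+1$ vertices of the facet with $t$ cancelling out, and that after cancellation it equals exactly $y_j$. A copy of $K^{(k+1)}_{k+2}$ is then the same datum as a tuple satisfying all $k+2$ conditions plus an arbitrary base point.

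Granting such a construction, the remainder is routine supersaturation. Setting $g=e$ collapses all of $y_0,\ldots,y_{k+1}$ to $(x_1,\ldots,x_k)$, so every $(x_1,\ldots,x_k)\in A$ together with every base point $t$ produces a ``degenerate'' copy of $K^{(k+1)}_{k+2}$; as in the $k=1$ case these can be taken pairwise edge-disjoint up to a lower-order error, giving $\gtrsim_\delta n^{k+1}$ edge-disjoint copies. The hypergraph removal lemma for $K^{(k+1)}_{k+2}$ then forces $H$ to contain $\gtrsim_\delta n^{k+2}$ copies in total, since otherwise the removal lemma would allow one to destroy every copy by deleting fewer hyperedges than there are pairwise edge-disjoint copies, which is absurd. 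Discarding the $O(n^{k+1})$ degenerate copies and dividing by the $n$-fold base-point multiplicity leaves $\gtrsim_\delta n^{k+1}$ genuine tuples $(g,x_1,\ldots,x_k)$ satisfying the full system, as desired; for the finitely many $n$ below the threshold (depending only on $\delta$) produced by this argument, the $g=e$ tuples alone number $|A|\ge\delta n^k\ge(\delta/n)\,n^{k+1}$, so the conclusion holds after shrinking $\eps$.

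The main obstacle is the construction of $H$ itself: one needs the parameterisation to be a bijection, the base point to disappear from every facet's associated point of $G^k$, the resulting points to be precisely $y_0,\ldots,y_{k+1}$, and the $g=e$ copies to be essentially edge-disjoint. The conditions $y_0,\ldots,y_k$ involve only left multiplications by $g$ (and none at all for $y_0$) and can be handled by translation-type gadgets, but the conjugation condition $y_{k+1}=(gx_1g^{-1},\ldots,gx_kg^{-1})\in A$ is more delicate: a base point placed on one side of the vertices, as in the classical triangle-removal construction, does not automatically cancel inside a conjugate, so the simplex gadget must be arranged so that this cancellation still occurs while keeping the degenerate simplices edge-disjoint. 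This is exactly the subtlety already present in the combinatorial proof of Theorem \ref{loaded-2}, and the multidimensional version incorporates it coordinate by coordinate; I expect the bijectivity bookkeeping, the edge-disjointness of the degenerate copies, and the removal-lemma supersaturation to be straightforward once the gadget is in hand.
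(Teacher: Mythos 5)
Your overall framework (encode the $k+2$ membership conditions as the facets of a simplex in a $(k+2)$-partite $(k+1)$-uniform hypergraph, extract $\gtrsim_\delta |G|^{k+1}$ pairwise edge-disjoint degenerate copies from the $g=e$ tuples, and invoke the hypergraph removal lemma) is the right one, and it is the strategy the paper uses for the conjugation-free part of the statement. But there is a genuine gap exactly where you flag it: you never construct the gadget that makes the conjugation condition $(gx_1g^{-1},\ldots,gx_kg^{-1})\in A$ a function of only $k+1$ of the $k+2$ vertex variables, and that construction is the entire content of the theorem beyond the translation-only version. Saying you expect the rest to be straightforward ``once the gadget is in hand'' defers precisely the step that does not follow from the classical triangle-removal template; as you yourself note, a base point inserted on one side of the group elements does not cancel inside a conjugate, and no amount of bookkeeping fixes that without a new idea.

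The paper's resolution is a dimension-lifting trick. One first proves the version of the theorem \emph{without} the conjugation condition (Theorem \ref{loaded-3}, via the simplex removal lemma applied to sets $E_i$ built from the partial products $x_0, x_0x_1,\ldots$ and $x_{k+1}^{-1}, x_{k+1}^{-1}x_k^{-1},\ldots$), and then deduces Theorem \ref{loaded-4} by applying that conjugation-free statement in dimension $k+1$ to the lifted set $B:=\{(y_1,\ldots,y_{k+1})\in G^{k+1} : (y_1y_{k+1}^{-1},\ldots,y_ky_{k+1}^{-1})\in A\}$, which has density at least $\delta$ in $G^{k+1}$. Writing $x_j:=y_jy_{k+1}^{-1}$, the conditions $(gy_1,\ldots,gy_i,y_{i+1},\ldots,y_{k+1})\in B$ for $i\le k$ unpack to the translation conditions $(gx_1,\ldots,gx_i,x_{i+1},\ldots,x_k)\in A$, while the top condition $(gy_1,\ldots,gy_{k+1})\in B$ unpacks to $((gy_1)(gy_{k+1})^{-1},\ldots,(gy_k)(gy_{k+1})^{-1})=(gx_1g^{-1},\ldots,gx_kg^{-1})\in A$; that is, the conjugation condition \emph{is} a left-translation condition in the lifted coordinates, with $y_{k+1}$ playing the role of your base point but attached multiplicatively on the right of \emph{every} coordinate so that it survives into the conjugate. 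Each tuple $(g,x_1,\ldots,x_k)$ is counted $|G|$ times (once per choice of $y_{k+1}$), which converts the $\eps|G|^{k+2}$ lifted tuples into the stated $\eps|G|^{k+1}$ count. If you insert this reduction as the first step, the remainder of your outline (degenerate copies from elements of $A$, injectivity of the facet maps, and the removal-lemma supersaturation) goes through as you describe.
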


Note that the $k=1$ case of this theorem, when applied to the set $A$ in Theorem \ref{loaded-2}, gives at least $\eps |G|^2$ pairs $(g,x)$ such that $x, gx, gxg^{-1} \in A$; by replacing $(x,g)$ by $(g' x',(g')^{-1})$ we see that Theorem \ref{loaded-2} is equivalent to the $k=1$ case of Theorem \ref{loaded-4}.

The analogue of Theorem \ref{loaded} (and hence Theorem \ref{loaded-2}) can fail for the pattern $x,g, gx, xg$ if one does not assume quasirandomness.  For instance, if $G$ has an index two subgroup $H$, and $A$ is the complement of $H$ in $G$, then $A$ contains no patterns of the form $x,g,gx$, let alone $x,g,gx,xg$.  However, it is still reasonable to hope for a ``noncommutative Schur theorem'', namely that if a finite group $G$ is partitioned into $r$ color classes $A_1,\ldots,A_r$, then at least one of the color classes $A_i$ has the property that $x,g,gx,xg \in A_i$ for at least $c |G|^2$ pairs $(x,g) \in G^2$, where $c$ depends only on $r$; in particular, under a suitable hypothesis that $G$ is sufficiently noncommutative (in the sense of footnote \ref{foot}), we can find $x,g$ such that $x,g,gx,xg$ are \emph{distinct} elements of $A_i$. We could not verify or disprove this claim, but note that an analogous claim in the infinitary setting of countable amenable groups was established in \cite[Theorem 3.4]{berg4}.  If one replaces the pattern $x,g,gx,xg$ by $x,g,gx$ then the claim easily follows from Folkman's theorem \cite[\S 3.4]{grs} applied to a randomly chosen (finite portion of an) IP system in $G$; we omit the details.

\begin{remark} Throughout this paper we shall freely use the axiom of choice.  However, thanks to a well known result of G\"odel \cite{godel}, any result that can be formalized in first-order arithmetic (such as\footnote{Strictly speaking, the definition of quasirandomness involves the field $\C$ of complex numbers, but it is easy to see that one can replace that field by the algebraic closure $\overline{\Q}$ of the rationals, which are easily formalized within first-order arithmetic.} Theorem \ref{main}) and is provable in Zermelo-Frankel set theory with the axiom of choice (ZFC), can also be proven in Zermelo-Frankel set theory without the axiom of choice (ZF).
\end{remark}

\subsection{Acknowledgments}

The first author acknowledges the support of the NSF under grant DMS-1162073.  The second author was partially supported by a Simons Investigator award from the Simons Foundation and by NSF grant DMS-0649473.  The authors also thank Robert Guralnick for help with the references, and the anonymous referee for a careful reading of the paper and several useful suggestions.

\section{Ultraproducts, $\sigma$-topologies, and Loeb measure}

The arguments in this paper will rely heavily on the machinery of ultraproducts, as well as some related concepts such as $\sigma$-topological spaces and the Loeb measure construction.  The purpose of this section is to review this machinery.

Given a set $S$, define an \emph{ultrafilter} on $S$ to be a collection $\alpha$ of subsets of $S$ such that the map $A \mapsto 1_{A \in \alpha}$, that assigns $1$ to subsets $A$ of $S$ in $\alpha$, and $0$ to all other subsets, is a finitely additive $\{0,1\}$-valued probability measure on $S$.  The set of all ultrafilters is denoted $\beta S$.  One can embed $S$ in $\beta S$ by identifying each $x \in S$ with the ultrafilter $\{ A \subset S: x \in A \}$ (or, equivalently, with the Dirac measure at $x$).  A routine application of Zorn's lemma shows that there exist non-principal ultrafilters $\alpha \in\beta S \backslash S$ for any infinite set $S$.

Now we fix a non-principal ultrafilter $\alpha \in \beta \N \backslash \N$ on the natural numbers $\N$.  A subset of $\N$ is said to be \emph{$\alpha$-large} if it lies in $\alpha$.  
Given a sequence $X_\n$ of sets indexed by all $\n$ in an $\alpha$-large set, we define the \emph{ultraproduct} $\prod_{\n \to \alpha} X_\n$ to be the space of all formal\footnote{Note that despite the formal use of the $\lim$ notation, no topological structure is required on the $X_\n$ in order to define an ultraproduct.  If one prefers, one could view $\prod_{\n \to \alpha} X_\n$ as the space of equivalence classes of tuples $(x_\n)_{\n \in A}$ defined on $\alpha$-large sets $A$, with $(x_\n)_{\n \in A}, (y_\n)_{\n \in B}$ equivalent if one has $x_\n=y_\n$ for an $\alpha$-large set of $\n$.} limits (or \emph{ultralimits}) $\lim_{\n \to \alpha} x_\n$ of sequences $x_\n$ defined and in $X_\n$ for an $\alpha$-large set of $\n$, with two formal limits $\lim_{\n \to \alpha} x_\n, \lim_{\n \to \alpha} y_\n$ declared to be equal if we have $x_\n = y_\n$ for an $\alpha$-large set of $\n$.  An ultraproduct $\prod_{\n \to \alpha} X_\n$ of sets $X_\n$ will be referred to as an \emph{internal set}.
For a single set $X$, the ultraproduct $\prod_{\n \to \alpha} X$ is called the \emph{ultrapower} of $X$ and is denoted ${}^* X$; note that $X$ embeds into ${}^* X$ after identifying each $x \in X$ with its ultralimit $\lim_{\n \to \alpha} x$. 
Given a sequence $f_\n: X_\n \to Y_\n$ of functions defined for an $\alpha$-large set of $\n$, we define the \emph{ultralimit} $\lim_{\n \to \alpha} f_\n$ to be the function from $\prod_{\n \to \alpha} X_\n$ to $\prod_{\n \to \alpha} Y_\n$ defined by
$$ (\lim_{\n \to \alpha} f_\n) (\lim_{\n \to \alpha} x_\n) := \lim_{\n \to \alpha} f_\n(x_\n)$$
Such functions are also known as \emph{internal functions}.

Given an element $x = \lim_{\n \to \alpha} x_\n$ of the ultrapower ${}^* \R$ of the reals, we say that $x$ is \emph{bounded} if $|x| \leq C$ for some real number $C$ (i.e. $|x_\n| \leq C$ for an $\alpha$-large set of $\n$), and \emph{infinitesimal} if $|x| \leq \eps$ for every real $\eps>0$.  A well-known Bolzano-Weierstrass argument shows that every bounded $x \in {}^* \R$ can be expressed uniquely as the sum of a real number $\operatorname{st}(x)$, referred to as the \emph{standard part} of $x$, and an infinitesimal.  By convention, we define the standard part of an unbounded element of ${}^*\R$ to be $\infty$.  The quantity $\st \lim_{\n \to \alpha} x_\n$ is also known as the \emph{$\alpha$-limit} of the $x_\n$, and can be denoted as $\alpha\!-\!\lim_\n x_\n$.

Internal sets $X = \prod_{\n \to \alpha} X_\n$ do not have a natural topological structure (other than the discrete topology).  However, as pointed out in \cite{szegedy}, there is a useful substitute for this topological structure on such an internal set $X$, namely the more general concept of a \emph{$\sigma$-topological} structure.

\begin{definition}[$\sigma$-topology]\label{sigma}\cite{szegedy}  A \emph{$\sigma$-topology} on a set $X$ is a collection ${\mathcal F}$ of subsets of $X$ which contains the empty set $\emptyset$ and whole set $X$, is closed under finite intersections, and is closed under \emph{at most countable} unions (as opposed to \emph{arbitrary} unions, which is the definition of a true topology).  The pair $(X,{\mathcal F})$ will be called a \emph{$\sigma$-topological space}.  An element of ${\mathcal F}$ will be called \emph{countably open}, and the complement of a countably open set in $X$ will be called \emph{countably closed}.   A map $f: X \to Y$ between two $\sigma$-topological spaces $(X,{\mathcal F}_X)$, $(Y, {\mathcal F}_Y)$ will be called \emph{continuous} if the inverse image of any countably open subset of $Y$ is a countably open subset of $X$, or equivalently if the inverse image of a countably closed set is a countably closed set.  Similarly, the map $f$ is said to be \emph{open} (resp. closed) if the forward image of any countably open (resp. closed) subset of $X$ is a countably open (resp. closed) subset of $Y$.

A $\sigma$-topological space $(X,{\mathcal F})$ is said to be \emph{countably compact} if any countable cover $X \subset \bigcup_{m=1}^\infty V_m$ of $X$ by countably open sets has a finite subcover, or equivalently if any collection $(F_m)_{m \in \N}$ of countably closed subsets in $X$ with the property that any finite intersection of the $F_m$ is non-empty, also necessarily has non-empty joint intersection $\bigcap_{m=1}^\infty F_m$.  A $\sigma$-topological space $(X,{\mathcal F})$ is said to be $T_1$ if every point in $X$ is countably closed.
\end{definition}

One should view ``countably compact $T_1$'' as being to $\sigma$-topology as ``compact Hausdorff'' is to ordinary topology.

We have the following basic compactness theorem, known to model theorists as the \emph{countable saturation property}:

\begin{lemma}[Countable saturation]\label{countsat}  Let $X = \prod_{\n \to \alpha} X_\n$ be an internal set, and let ${\mathcal F}_X$ be the collection of all subsets of $X$ that can be expressed as the union of at most countably many internal subsets of $X$.  Then $(X,{\mathcal F}_X)$ is a countably compact $T_1$ $\sigma$-topological space.  

Furthermore, any internal function $f: X \to Y$ between two internal sets $X, Y$ will be continuous, open and closed with respect to these $\sigma$-topological structures.
\end{lemma}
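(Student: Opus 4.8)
The plan is to verify the four assertions in turn: that ${\mathcal F}_X$ is a $\sigma$-topology, that it is $T_1$, that it is countably compact, and that internal maps are continuous, open and closed. The first two and the last are routine ultraproduct bookkeeping; all the real content is in countable compactness.

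For the $\sigma$-topology axioms, I would note that $\emptyset$ and $X$ are themselves internal and that $\prod_{\n\to\alpha} A_\n \cap \prod_{\n\to\alpha} B_\n = \prod_{\n\to\alpha}(A_\n\cap B_\n)$ is internal; distributing an intersection of two countable unions of internal sets then produces a countable union of internal sets, and closure under countable unions is immediate. For $T_1$, a point $x=\lim_{\n\to\alpha} x_\n$ has complement $\prod_{\n\to\alpha}(X_\n\setminus\{x_\n\})$, which is internal, hence countably open, so $\{x\}$ is countably closed.

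Countable compactness is the heart. Since the complement of an internal set is internal, a countably closed set is exactly a countable intersection of internal sets, so by passing to complements the claim reduces to the standard countable saturation property: if $E_1,E_2,\dots$ are internal subsets of $X$ with $E_1\cap\cdots\cap E_n\neq\emptyset$ for every $n$, then $\bigcap_n E_n\neq\emptyset$. Replacing $E_n$ by $E_1\cap\cdots\cap E_n$ and re-choosing fibrewise representatives, I may assume the $E_n$ are nonempty and that $E_{1,\n}\supseteq E_{2,\n}\supseteq\cdots$ for every single $\n$, where $E_n=\prod_{\n\to\alpha}E_{n,\n}$. Then $B_n:=\{\n: E_{n,\n}\neq\emptyset \text{ and } \n\geq n\}$ is $\alpha$-large — this is where the non-principality of $\alpha$ is used, to know cofinite sets are $\alpha$-large — and the $B_n$ are decreasing. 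For $\n\in B_1$, let $m(\n)$ be the largest $m$ with $\n\in B_m$ (finite, since $B_m\subseteq\{\n\geq m\}$) and choose $x_\n\in E_{m(\n),\n}$; otherwise choose $x_\n\in X_\n$ arbitrarily. Then for each fixed $n$ and each $\n\in B_n$ we have $m(\n)\geq n$, hence $x_\n\in E_{m(\n),\n}\subseteq E_{n,\n}$, so $\lim_{\n\to\alpha}x_\n$ lies in every $E_n$. Manufacturing this single diagonal sequence that simultaneously witnesses membership in all the $E_n$ is the one genuinely delicate step, and it hinges on the level function $m(\n)$ being well-defined.

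Finally, for an internal $f=\lim_{\n\to\alpha}f_\n$ one checks directly that $f^{-1}(\prod_{\n\to\alpha}V_\n)=\prod_{\n\to\alpha}f_\n^{-1}(V_\n)$ and $f(\prod_{\n\to\alpha}U_\n)=\prod_{\n\to\alpha}f_\n(U_\n)$ are internal, so that $f$ is continuous and open. For closedness, write a countably closed set as a decreasing countable intersection $F=\bigcap_k G_k$ of internal sets; I claim $f(F)=\bigcap_k f(G_k)$. One inclusion is trivial, and for the other, if $y\in\bigcap_k f(G_k)$ then the sets $G_k\cap f^{-1}(\{y\})$ are nonempty internal sets (since $\{y\}$, and hence $f^{-1}(\{y\})$, is internal) that decrease in $k$, so by countable saturation their intersection contains a point $x\in F$ with $f(x)=y$. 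As each $f(G_k)$ is internal, $f(F)$ is a countable intersection of internal sets, hence countably closed. The only obstacle worth flagging is the saturation argument of the previous paragraph.
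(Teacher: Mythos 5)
Your proposal is correct and follows essentially the same route as the paper: the same diagonal choice of representatives $x_\n \in E_{m(\n),\n}$ over a decreasing chain of $\alpha$-large index sets for countable saturation (your level function $m(\n)$ is the paper's $M_\n$), and the same reduction of closedness of internal maps to saturation applied to the fibres $G_k \cap f^{-1}(\{y\})$. The only difference is that you spell out the routine $\sigma$-topology and $T_1$ verifications that the paper leaves as ``easy to see.''
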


\begin{proof}  It is easy to see that ${\mathcal F}_X$ is a $T_1$ $\sigma$-topology.  To verify countable compactness, we will use the formulation  from Definition \ref{sigma} involving countably closed sets.
Expressing each $F_m$ as the countable intersection of internal sets, we see that we may assume without loss of generality that the $F_m$ are internal, thus for each $m$ we have $F_m = \prod_{\n \to \alpha} F_{\n,m}$ for some subsets $F_{\n,m}$ of $X_\n$.   (Strictly speaking, $F_{\n,m}$ is initially only defined for an $\alpha$-large set of $\n$, but we can extend to all $\n$ by defining $F_{\n,m}$ to be the empty set for all other values of $\n$.)

By hypothesis, $\bigcap_{m=1}^M F_m$ is non-empty for any fixed $M$.  As a consequence, for each such $M$, we see that $\bigcap_{m=1}^M F_{\n,m}$ is non-empty for all $\n$ in an $\alpha$-large subset $S_M$ of $\N$.  By shrinking the $S_M$ if necessary, we may assume that they are decreasing in $M$.  For each $\n \in S_1$, let $M_\n$ be the largest natural number less than or equal to $\n$ with the property that $\n \in S_{M_\n}$, so that $\bigcap_{m=1}^{M_\n} F_{\n,m}$ is non-empty.  By the axiom of choice, we may thus find a sequence $(x_\n)_{\n \in S_1}$ such that $x_\n \in \bigcap_{m=1}^{M_\n} F_{\n,m}$ for all $\n \in S_1$.  If we form $x := \lim_{\n \to \alpha} x_\n$, then we have $x \in F_M$ for all $M$, since $x_\n \in F_{\n,M}$ for all $\n \in S_M$.  The claim follows.

Now let $f: X \to Y$ be an internal function.  It is clear that $f$ is continuous and open.  To demonstrate that it is closed, let $F$ be a countably closed subset of $X$, thus $F = \bigcap_{n=1}^\infty F_n$ for some internal subsets $F_n$ of $X$.  Observe that if $y \in Y$ lies in $f(F_n)$ for each $n$, then the internal sets $\{ x \in F_n: f(x) = y \}$ have all finite intersections non-empty, and hence by countable compactness $\{ x \in F: f(x) = y\}$ is non-empty as well.  This shows that $f(F) = \bigcap_{n=1}^\infty f(F_n)$, and so $f$ is closed as required.
\end{proof}

Henceforth we endow all internal sets with the $\sigma$-topological structure given by Lemma \ref{countsat}.  This structure is unfortunately not a genuine topology, as all points are internal and thus countably open, but arbitrary unions of points need not be countably open.  However, it turns out in practice that $\sigma$-topological structure can serve as a reasonable substitute for genuine topological structure, so long as one restricts attention to at most countably many sets at any given time (and provided that one works exclusively with sequences rather than with nets).

We also note the pleasant fact that the standard part function $\st: {}^* \R \to \R \cup \{\infty\}$ is a continuous map from ${}^*\R$ (with the $\sigma$-topological structure) to $\R \cup \{\infty\}$ (with the usual topological structure), thus the inverse image of an open (resp. closed) set in $\R \cup \{\infty\}$ is countably open (resp. countably closed) in ${}^* \R$. 

Given a finite non-empty set $X$, we can define the uniform probability measure $\mu_X$ on $X$ by the formula
$$ \mu_X(E) := |E|/|X|.$$
It turns out that this simple measure construction can be extended to ultraproducts of finite non-empty sets as well, and is known as the \emph{Loeb measure construction}:

\begin{definition}[Loeb measure]\label{loeb}\cite{loeb}  Let $X = \prod_{\n \to \alpha} X_\n$ be an ultraproduct of finite non-empty sets $X_\n$, and let $\mu_{X_\n}$ be the uniform probability measures on each $X_\n$.  Let ${\mathcal B}_X^0$ be the Boolean algebra of internal subsets of $X$, and let ${\mathcal B}_X$ be the $\sigma$-algebra generated by ${\mathcal B}_X^0$.  We define the \emph{Loeb measure} on $X$ to be the unique probability measure on ${\mathcal B}_X$ with the property that
\begin{equation}\label{mux}
 \mu_X(F) := \operatorname{st}( {}^* \mu_X(F) )
\end{equation}
whenever $F = \prod_{\n \to \alpha} F_\n$ is an internal subset of $X$, where 
$$
{}^* \mu_X(F) := \lim_{\n \to \alpha} \mu_{X_\n}( F_\n ) \in {}^* [0,1]$$
is the internal measure of $F$.
\end{definition}

To verify that Loeb measure actually exists and is unique, we observe from Lemma \ref{countsat} that the function $\mu_X$ defined on ${\mathcal B}_X^0$ is a premeasure with total mass one, and the claim then follows from the Carath\'eodory extension theorem (or, more precisely, the Hahn-Kolmogorov extension theorem).  

\begin{remark} One can view ${\mathcal B}_X^0$ as analogous to the algebra of elementary subsets of a Euclidean space (i.e. Boolean combinations of finitely many boxes), with ${\mathcal B}_X$ as analogous to the $\sigma$-algebra of Borel sets (indeed, note that this $\sigma$-algebra is generated by the countably open sets).  One could, if one wished, form the completion of Loeb measure by adjoining all sub-null sets to ${\mathcal B}_X$, thus giving a measure analogous to Lebesgue measure rather than Borel measure.  It will however be convenient to avoid working with this completion, as it has poorer properties with respect to restriction to measure zero subsets.  (This is analogous to how a slice of a Borel measurable subset of $\R^2$ is automatically Borel measurable in $\R$, whereas the analogous claim for Lebesgue measurable subsets is certainly false.)  

It will also be very important to keep in mind that on the product 
$$X \times Y = (\lim_{\n \to \alpha} X_\n) \times (\lim_{\n \to \alpha} Y_\n) \equiv \lim_{\n \to \alpha} (X_\n \times Y_\n)$$ 
of two ultraproducts $X = \lim_{\n \to \alpha} X_\n$, $Y = \lim_{\n \to \alpha} Y_\n$ of finite non-empty sets, the Loeb measure $\mu_{X \times Y}$ is \emph{not}, in general, the product $\mu_X \times \mu_Y$ of the Loeb measures on $X$ and $Y$; instead, the latter measure is a restriction of the former to a much smaller $\sigma$-algebra (${\mathcal B}_X \times {\mathcal B}_Y$ is usually much smaller than ${\mathcal B}_{X \times Y}$).  In a similar spirit, the $\sigma$-topology on $X \times Y$ is \emph{not} the product of the $\sigma$-topologies on $X$ and $Y$ in general, but is usually a much finer $\sigma$-topology.  We will discuss this important phenomenon in more detail later in this section.
\end{remark}

We record the following pleasant approximation property:

\begin{lemma}[Approximation by internal sets]\label{appx}  Let $X$ be an ultraproduct of finite non-empty sets, and let $E$ be a Loeb measurable subset of $X$.  Then there exists an internal subset $E'$ of $X$ that differs from $E$ by a $\mu_X$-null set (thus $\mu_X(E \Delta E') = 0$).
\end{lemma}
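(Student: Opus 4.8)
The plan is to prove the lemma in two stages: first show that every Loeb measurable set can be approximated \emph{to within any prescribed $\eps>0$} in Loeb measure by an internal set, and then \emph{upgrade} this to an exact approximation modulo a null set by exploiting the countable saturation property of Lemma \ref{countsat}. The first stage is a routine exercise in measure theory. The main obstacle is the second stage: in ordinary measure theory one cannot in general improve an ``$\eps$-approximation for every $\eps$'' to an ``exact approximation mod null'' (a fat Cantor set in $[0,1]$ cannot be approximated mod null by a finite union of intervals). What rescues us here is precisely that the internal sets are ``compact enough'': a sequence of internal sets that is Cauchy in Loeb measure has a genuine internal set as a limit, and this is exactly where Lemma \ref{countsat} enters.

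\emph{Stage 1: approximation to within $\eps$.} Let $\mathcal{C}$ denote the collection of all $E \in {\mathcal B}_X$ such that for every $\eps>0$ there is an internal set $A \in {\mathcal B}_X^0$ with $\mu_X(E \Delta A) < \eps$. Clearly ${\mathcal B}_X^0 \subset \mathcal{C}$, and $\mathcal{C}$ is closed under complements since $E^c \Delta A^c = E \Delta A$ and $A^c$ is internal. To see that $\mathcal{C}$ is closed under countable unions, suppose $E_j \in \mathcal{C}$ with internal $A_j$ satisfying $\mu_X(E_j \Delta A_j) < \eps 2^{-j-1}$, and set $E := \bigcup_j E_j$ and $G := \bigcup_j A_j$. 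A direct set-theoretic check gives $E \Delta G \subset \bigcup_j (E_j \Delta A_j)$, so $\mu_X(E \Delta G) < \eps/2$; and by continuity from below of the measure $\mu_X$, for $N$ large enough the internal set $A := \bigcup_{j \leq N} A_j \in {\mathcal B}_X^0$ satisfies $\mu_X(G \setminus A) < \eps/2$, whence $\mu_X(E \Delta A) < \eps$. Thus $\mathcal{C}$ is a $\sigma$-algebra containing ${\mathcal B}_X^0$, and since ${\mathcal B}_X$ is by definition the $\sigma$-algebra generated by ${\mathcal B}_X^0$ we conclude $\mathcal{C} = {\mathcal B}_X$.

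\emph{Stage 2: upgrading to a null symmetric difference.} Now fix a Loeb measurable $E$. By Stage 1, for each $k \in \N$ choose an internal set $A_k$ with $\mu_X(E \Delta A_k) < 4^{-k}$; the triangle inequality for symmetric differences gives $\mu_X(A_k \Delta A_{k+1}) < 2 \cdot 4^{-k}$, and since this is a strict inequality between standard reals we in fact have ${}^*\mu_X(A_k \Delta A_{k+1}) < 2 \cdot 4^{-k}$ in ${}^*\R$. Consider the internal set ${\mathbf J}$ of all internal subsets of $X$ (concretely, ${\mathbf J} = \prod_{\n \to \alpha} {\mathcal P}(X_\n)$, which is an internal set since each ${\mathcal P}(X_\n)$ is finite), and for each $k$ set
$$ \Phi_k := \{ A \in {\mathbf J} : {}^*\mu_X(A \Delta A_k) \leq 3 \cdot 4^{-k} \}. $$
The map $A \mapsto {}^*\mu_X(A \Delta A_k)$ is an internal function from ${\mathbf J}$ to ${}^*\R$, so $\Phi_k$ is an internal, hence countably closed, subset of ${\mathbf J}$. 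For any $K$ we have $A_K \in \bigcap_{k \leq K} \Phi_k$, because for $k \leq K$, ${}^*\mu_X(A_K \Delta A_k) \leq \sum_{j=k}^{K-1} {}^*\mu_X(A_j \Delta A_{j+1}) < \sum_{j \geq k} 2 \cdot 4^{-j} = \tfrac{8}{3} 4^{-k} \leq 3 \cdot 4^{-k}$. Hence the countably closed sets $(\Phi_k)_{k \in \N}$ have the finite intersection property, and by the countable compactness part of Lemma \ref{countsat} their joint intersection is non-empty; pick $E' \in \bigcap_{k} \Phi_k$. Then $E'$ is an internal subset of $X$, and for every $k$ we have $\mu_X(E' \Delta A_k) = \st({}^*\mu_X(E' \Delta A_k)) \leq 3 \cdot 4^{-k}$, so $\mu_X(E' \Delta E) \leq \mu_X(E' \Delta A_k) + \mu_X(A_k \Delta E) \leq 3 \cdot 4^{-k} + 4^{-k}$. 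Letting $k \to \infty$ gives $\mu_X(E' \Delta E) = 0$, as required.

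I note that Stage 2 can alternatively be carried out by an explicit diagonalisation: writing $A_k = \prod_{\n \to \alpha} A_{k,\n}$, one chooses an index function $\n \mapsto k(\n)$ that is unbounded along $\alpha$ and for which the estimates $\mu_{X_\n}(A_{j,\n} \Delta A_{j+1,\n}) < 2 \cdot 4^{-j}$ hold for all $j < k(\n)$, and sets $E' := \prod_{\n \to \alpha} A_{k(\n),\n}$; the same telescoping estimate then yields $\mu_X(E' \Delta A_K) \leq \tfrac{8}{3} 4^{-K}$ for every standard $K$. Either way, the essential point — and the step I expect to require the most care — is the passage from finitely-approximable to exactly-approximable, which rests on the ``completeness'' of the family of internal sets under the Loeb pseudometric $\mu_X(\cdot \Delta \cdot)$, a manifestation of countable saturation.
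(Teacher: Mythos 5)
Your proof is correct, and it takes a genuinely different route from the paper's, although both ultimately rest on the same nonstandard ingredient: countable saturation applied to the internal power set $\prod_{\n \to \alpha} 2^{X_\n}$. The paper argues in a single pass, showing directly that the collection of sets differing from an internal set by a $\mu_X$-null set is a $\sigma$-algebra containing ${\mathcal B}_X^0$; the only nontrivial case is a countable union $\bigcup_n E_n$ of disjoint internal sets of total measure $p$, and saturation is invoked to produce one internal $S$ that contains every finite union $\bigcup_{n \leq n_0} E_n$ while having internal measure at most $p + \frac{1}{n_0}$ for \emph{every} $n_0$, which forces $\mu_X(S \Delta \bigcup_n E_n) = 0$. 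You instead first prove the purely measure-theoretic fact that every Loeb set is $\eps$-approximable by internal sets, and then isolate a completeness statement --- a sequence of internal sets that is Cauchy in the pseudometric $\mu_X(\cdot\,\Delta\,\cdot)$ has an internal limit --- which you establish by saturation applied to the internal sets $\Phi_k$. Your route is slightly longer but more modular: Stage 1 uses no nonstandard analysis at all, and the Stage 2 completeness fact is a reusable principle in its own right (the internal sets form a closed subset of the measure algebra). The paper's one-pass argument is more economical, since building the null-difference property directly into the $\sigma$-algebra argument makes the $\eps$-approximation stage unnecessary. The delicate points in your version --- that each $\Phi_k$ is internal and hence countably closed, that the strict inequality $\mu_X(A_k \Delta A_{k+1}) < 2 \cdot 4^{-k}$ between standard reals upgrades to the internal measure, and that $A_K$ itself witnesses the finite intersection property --- are all handled correctly.
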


\begin{proof}  As the Loeb $\sigma$-algebra is generated by the Boolean algebra of internal subsets, it suffices to show that the property of differing from an internal subset by a $\mu_X$-null set is closed under complements and countable unions.  The complementation property is clear. To prove the countable union property, it suffices by countable additivity of $\mu_X$ to show that any countably open set $\bigcup_n E_n$ (where each $E_n$ is an internal subset of $X$) itself differs from an internal subset by a $\mu_X$-null set.  We may of course assume that the $E_n$ are disjoint.  Let $p$ denote the Loeb measure of $\bigcup_n E_n$, thus $p = \sum_{n=1}^\infty \mu(E_n)$.  For any $n_0 \in \N$, we clearly can find an internal subset $S$ of $X$ which contains $\bigcup_{n=1}^{n_0} E_n$ and has internal measure at most $p + \frac{1}{n_0}$; indeed, one can just take $S = \bigcup_{n=1}^{n_0} E_n$ itself.  This is an internal property of the set $S$, and so by countable saturation (applied to the internal power set $\prod_{\n \to \alpha} 2^{X_\n}$ of $X$) we conclude that there exists an internal subset $S$ of $X$ which contains $\bigcup_{n=1}^{n_0} E_n$ and has internal measure (and hence Loeb measure) at most $p + \frac{1}{n_0}$ for \emph{every} $n_0 \in \N$.  In particular, it can only differ from $\bigcup_{n=1}^\infty E_n$ by a $\mu_G$-null set, and the claim follows.
\end{proof}

One of the basic theorems in ordinary topology is Tychonoff's theorem that the arbitrary product of compact topological spaces is still compact.  Related to this is the assertion (proven using the Kolmogorov extension theorem) that the product of arbitrarily many (inner regular) probability spaces is still a probability space.  We now develop analogues of these two assertions for $\sigma$-topological spaces and for Loeb measure, which will be needed later in the paper when we wish to apply probability theory to a sequence of random variables drawn independently and uniformly at random from an ultraproduct of finite non-empty sets.

We first give the $\sigma$-topological version of Tychonoff's theorem, a fact closely related to the property of \emph{$\omega_1$-saturation} considered by model theorists.  

\begin{lemma}[$\sigma$-Tychonoff theorem]\label{sigma-tych}
Let $(X_a)_{a \in A}$ be a family of sets $X_a$ indexed by an at most countable set $A$, let $X_A := \prod_{a \in A} X_a$ be the product space, and for each $I \subset J \subset A$, let $\pi_{I \leftarrow J}: X_J \to X_I$ be the obvious projection map between the spaces $X_I := \prod_{a \in I} X_a$ and $X_J := \prod_{a \in J} X_a$.  Suppose that for each finite subset $I$ of $A$, $X_I$ is equipped with a countably compact $T_1$ $\sigma$-topology ${\mathcal F}_I$, such that the projection maps $\pi_{I \leftarrow J}$ are all both continuous and closed.  Define a \emph{cylinder set} on $X_A$ to be a set of the form $\pi^{-1}_{I \leftarrow A}(V_I)$, where $V_I$ is a countably open subset of $X_I$, and let ${\mathcal F}_A$ be the collection of all at most countable unions of cylinder sets.  Then $(X_A, {\mathcal F}_A)$ is also a countably compact $T_1$ $\sigma$-topological space.
\end{lemma}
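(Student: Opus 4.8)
The plan is to verify the three assertions in turn: that $\mathcal{F}_A$ is a $\sigma$-topology, that it is $T_1$, and that it is countably compact, with the last being the substantive point. For the first, $\mathcal{F}_A$ trivially contains $\emptyset$ and $X_A$ and is closed under at most countable unions, so the only issue is closure under finite intersections; after distributing, it suffices to intersect two cylinder sets $\pi^{-1}_{I\leftarrow A}(V_I)$ and $\pi^{-1}_{J\leftarrow A}(W_J)$, and setting $K:=I\cup J$ (still finite) and using continuity of $\pi_{I\leftarrow K},\pi_{J\leftarrow K}$, both become cylinders over $K$ whose intersection is the cylinder $\pi^{-1}_{K\leftarrow A}(\pi^{-1}_{I\leftarrow K}(V_I)\cap\pi^{-1}_{J\leftarrow K}(W_J))$. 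The same continuity argument shows each $\pi_{I\leftarrow A}\colon X_A\to X_I$ is continuous for the $\sigma$-topologies, since preimages of countably open sets are cylinders, hence in $\mathcal{F}_A$. For $T_1$, I would enumerate $A=\{a_1,a_2,\ldots\}$, set $I_n:=\{a_1,\ldots,a_n\}$, and observe that for $x\in X_A$ one has $\{x\}=\bigcap_n \pi^{-1}_{I_n\leftarrow A}(\{\pi_{I_n\leftarrow A}(x)\})$, a countable intersection of countably closed sets (each singleton in $X_{I_n}$ is countably closed by the $T_1$ property of $X_{I_n}$, and its preimage is countably closed by continuity), hence countably closed.

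For countable compactness, let $(F_m)_{m\in\N}$ be countably closed subsets of $X_A$ with the finite intersection property. Each $X_A\setminus F_m$ is a countable union of cylinder sets, so $F_m$ is a countable intersection of sets of the form $\pi^{-1}_{I\leftarrow A}(G)$ with $I\subset A$ finite and $G$ countably closed in $X_I$. Replacing $(F_m)$ by the (still countable) family of all these sets — which changes neither the total intersection nor the finite intersection property, since the first-coordinate projection of any finite index set is finite — I may assume outright $F_m=\pi^{-1}_{I_m\leftarrow A}(G_m)$. I would then put $J_m:=I_1\cup\cdots\cup I_m$, an increasing sequence of finite subsets of $A$, and $J_\infty:=\bigcup_m J_m$. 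Since $\pi_{J_m\leftarrow A}$ is surjective, $F_1\cap\cdots\cap F_m=\pi^{-1}_{J_m\leftarrow A}(H_m)$ for the nonempty countably closed set $H_m:=\bigcap_{j\le m}\pi^{-1}_{I_j\leftarrow J_m}(G_j)\subset X_{J_m}$ (a finite intersection of preimages of countably closed sets under continuous maps), and one checks $\pi_{J_m\leftarrow J_{m+1}}(H_{m+1})\subset H_m$.

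Next I would pass to the stable images $H_m':=\bigcap_{k\ge m}\pi_{J_m\leftarrow J_k}(H_k)$, which is exactly the classical Mittag-Leffler-type device for inverse limits of compact spaces. Here the hypothesis that the projections are \emph{closed} is used essentially: each $\pi_{J_m\leftarrow J_k}$ is a composition of closed maps, hence closed, so the sets $\pi_{J_m\leftarrow J_k}(H_k)$ are countably closed; they are nonempty and decreasing in $k$, so by the countable compactness of $X_{J_m}$ each $H_m'$ is a nonempty countably closed subset of $X_{J_m}$. A second, nested application of countable compactness — to the decreasing sequence $k\mapsto \pi_{J_{m+1}\leftarrow J_k}(H_k)\cap \pi^{-1}_{J_m\leftarrow J_{m+1}}(\{y\})$ in $X_{J_{m+1}}$ for a given $y\in H_m'$, using that singletons in $X_{J_m}$ are countably closed — shows that $\pi_{J_m\leftarrow J_{m+1}}\colon H_{m+1}'\to H_m'$ is surjective. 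By the axiom of (dependent) choice I then pick a compatible sequence $y_m\in H_m'$ with $\pi_{J_m\leftarrow J_{m+1}}(y_{m+1})=y_m$; these assemble into a point $y\in X_{J_\infty}$, which I extend arbitrarily to a point $x\in X_A$. Since $\pi_{J_m\leftarrow A}(x)=y_m\in H_m$ for every $m$, we get $x\in F_1\cap\cdots\cap F_m$ for every $m$, i.e. $x\in\bigcap_m F_m$.

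The hard part will be the surjectivity of $\pi_{J_m\leftarrow J_{m+1}}$ on the stable images, which forces the two-layered compactness argument above and is the only place where the ``closed'' hypothesis (together with $T_1$, via singletons being countably closed) is genuinely needed. The remaining difficulty is purely bookkeeping: one must be scrupulous about only ever manipulating countably many cylinder sets at a time — which is precisely why $A$ is assumed at most countable, so that it can be exhausted by the increasing finite sets $J_m$ and the whole argument stays within the countable operations permitted by a $\sigma$-topology.
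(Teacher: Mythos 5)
Your proof is correct and follows essentially the same route as the paper's: reduce to closed cylinder sets over an increasing sequence of finite index sets, use the closedness of the projections together with countable compactness of the finite products to form stable images, use the $T_1$ hypothesis plus a second application of countable compactness to get surjectivity of the bonding maps between consecutive stable sets, and thread a compatible sequence through by (dependent) choice. The only cosmetic difference is that the paper constructs the points $x_n$ recursively, intersecting $F_{I_m}$ with the fiber $\pi^{-1}_{I_{n-1}\leftarrow I_m}(\{x_{n-1}\})$ before projecting, rather than first isolating the stable images $H_m'$ and then proving restricted surjectivity as you do.
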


\begin{proof}   It is clear that $(X_A, {\mathcal F}_A)$ is a $\sigma$-topological space, so we only need to verify countable compactness.  As in Lemma \ref{countsat}, it suffices to show that if $(E_n)_{n=1}^\infty$ is a sequence of cylinder sets $E_n = \pi_{I_n \leftarrow A}^{-1}(F_{I_n})$, where each $I_n$ is finite and $F_{I_n}$ is countably closed, with $\bigcap_{n=1}^M E_n$ non-empty for every finite $M$, then $\bigcap_{n=1}^\infty E_n$ is non-empty as well.

By increasing each $I_n$ if necessary (and using the continuity of the projection maps $\pi_{I \leftarrow J}$) we may assume that the $I_n$ are increasing in $I$, and then by shrinking the $F_{I_n}$ we may assume that $F_{I_m} \subset \pi_{I_n \leftarrow I_m}^{-1}(F_{I_n})$ for all $m \geq n$.  

We now recursively construct a sequence of points $x_n \in \bigcap_{I_n \leftarrow I_m}(F_{I_m})$ for $n=1,2,\ldots$ as follows.  To construct $x_1$, observe from the closed nature of the $\pi_{I \leftarrow J}$ that $\pi_{I_1 \leftarrow I_m}(F_{I_m})$ are countably closed non-empty decreasing subsets of $X_{I_1}$, hence by countable compactness $\bigcap_{m=1}^\infty \pi_{I_1 \leftarrow I_m}(F_{I_m})$ is non-empty.  We select a point $x_1$ arbitrarily from this set.  Now assume inductively that $n>1$ and that $x_{n-1}$ has already been constructed.  Then the sets $\pi_{I_n \leftarrow I_m}(F_{I_m} \cap \pi_{I_{n-1} \leftarrow I_m}^{-1}(\{x_{n-1}\}))$ for $m \geq n$ are countably closed non-empty decreasing subsets of $X_{I_1}$ (here we use the continuous and closed nature of the $\pi_{I \leftarrow J}$, as well as the $T_1$ nature of $I_{n-1}$), and hence by countable compactness we can find $x_n$ in the joint intersection $\bigcap_{m=n}^\infty \pi_{I_n \leftarrow I_m}(F_{I_m} \cap \pi_{I_{n-1} \leftarrow I_m}^{-1}(\{x_{n-1}\}))$ is non-empty.  By construction, we have $\pi_{I_n \leftarrow I_m}(x_m) = x_n$ for all $m \geq n$.  If we then select $x \in X_A$ such that $\pi_{I_n \leftarrow A}(x) = x_n$ for all $n$, we conclude that $x$ lies in every $F_n$ as required.
\end{proof}

Now we turn to product Loeb measures.  Let us first consider the problem of multiplying together two Loeb probability spaces $(X, {\mathcal B}_X, \mu_X)$ and $(Y, {\mathcal B}_Y, \mu_Y)$, where $X = \prod_{\n \to \alpha} X_\n, Y = \prod_{\n \to \alpha} Y_\n$ are ultraproducts of finite non-empty sets.  We have two probability space structures on the product $X \times Y$, namely the Loeb space $(X \times Y, {\mathcal B}_{X \times Y}, \mu_{X \times Y})$ and the product space $(X \times Y, {\mathcal B}_X \times {\mathcal B}_Y, \mu_X \times \mu_Y)$.  It is easy to see that the latter space is a restriction of the former, thus ${\mathcal B}_X \times {\mathcal B}_Y \subset {\mathcal B}_{X \times Y}$, and $\mu_X \times \mu_Y(E) = \mu_{X \times Y}(E)$ whenever $E \in {\mathcal B}_X \times {\mathcal B}_Y$.  This is ultimately because the Cartesian product of two internal sets is again an internal set (identifying Cartesian products of ultraproducts with ultraproducts of Cartesian products in the obvious manner).   On the other hand, not every set which is measurable in ${\mathcal B}_{X \times Y}$ is measurable in ${\mathcal B}_X \times {\mathcal B}_Y$; intuitively, the reason for this is that internal subsets of $X \times Y$ need not be approximable by Boolean combinations of finitely many Cartesian products of internal subsets of $X$ and $Y$ (or, at the finitary level, not all subsets of $X_\n \times Y_\n$ can be well approximated by Boolean combinations of finitely many subsets of $X_\n$ and $Y_\n$, where the number of such subsets is bounded uniformly in $\n$).  Thus, one should view the probability space $(X \times Y, {\mathcal B}_{X \times Y}, \mu_{X \times Y})$
as a strict \emph{extension} of the probability space $(X \times Y, {\mathcal B}_X \times {\mathcal B}_Y, \mu_X \times \mu_Y)$, or equivalently one should view the latter space as a strict factor of the former.

Despite the disparity between the two factors, we still have the following version of the Fubini-Tonelli theorem:

\begin{theorem}[Fubini-Tonelli theorem for Loeb measure]\label{ftl}  Let $X = \prod_{\n \to \alpha} X_\n, Y = \prod_{\n \to \alpha} Y_\n$ be ultraproducts of finite non-empty sets.  Let $f$ be a bounded ${\mathcal B}_{X \times Y}$-measurable function.  Then, for every $x \in X$, the function $y \mapsto f(x,y)$ is ${\mathcal B}_Y$-measurable, and the function $x \mapsto \int_Y f(x,y)\ d\mu_Y(y)$ is ${\mathcal B}_X$-measurable.  Furthermore, we have the identity
$$ \int_{X \times Y} f(x,y)\ d\mu_{X \times Y}(x,y) = \int_X \left(\int_Y f(x,y)\ d\mu_Y(y)\right)\ d\mu_X(x).$$
Similarly with the roles of $X$ and $Y$ reversed.  As a particular corollary, if $E$ is a $\mu_{X \times Y}$-null set in $X \times Y$, then for $\mu_X$-almost every $x \in X$, the set $E_x := \{ y \in Y: (x,y) \in E \}$ is a $\mu_Y$-null set, and similarly with the roles of $X$ and $Y$ reversed.
\end{theorem}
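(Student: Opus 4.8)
The plan is to run the standard monotone-class argument, bootstrapping from the base case of internal sets, where the desired identity is just the elementary Fubini identity on each finite factor $X_\n\times Y_\n$ transferred through the ultralimit, and where the additional structure of Loeb measure is what makes slices behave well (in contrast with the Lebesgue case).

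\emph{Step 1: slices of internal sets.} First I would observe that if $E=\prod_{\n\to\alpha}E_\n$ is an internal subset of $X\times Y$ and $x=\lim_{\n\to\alpha}x_\n\in X$, then the slice $E_x=\{y\in Y:(x,y)\in E\}$ equals $\prod_{\n\to\alpha}(E_\n)_{x_\n}$, where $(E_\n)_{x_\n}:=\{y_\n\in Y_\n:(x_\n,y_\n)\in E_\n\}$; in particular $E_x$ is internal, hence $\mathcal{B}_Y$-measurable, and $\int_Y 1_E(x,y)\,d\mu_Y(y)=\mu_Y(E_x)=\st\tilde g_E(x)$, where $\tilde g_E:=\lim_{\n\to\alpha}\big(x_\n\mapsto |(E_\n)_{x_\n}|/|Y_\n|\big):X\to{}^*[0,1]$ is an internal function. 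Since the standard part of any bounded internal function $\tilde g=\lim_{\n\to\alpha}g_\n$ on $X$ is $\mathcal{B}_X$-measurable (its sublevel set $\{\st\tilde g<t\}$ is the countable union over $k$ of the internal sets $\{\tilde g<t-1/k\}$), the map $x\mapsto\mu_Y(E_x)$ is $\mathcal{B}_X$-measurable.

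\emph{Step 2: the identity for internal sets.} The one substantial ingredient is a sub-lemma asserting that for a bounded internal function $\tilde g=\lim_{\n\to\alpha}g_\n$ on an ultraproduct $X$ of finite non-empty sets one has $\int_X\st(\tilde g)\,d\mu_X=\st\big(\lim_{\n\to\alpha}\E_{x_\n\in X_\n}g_\n(x_\n)\big)$. I would prove this first for internal functions taking finitely many standard values (both sides are then the same finite linear combination of Loeb measures of internal sets, using $\mu_X(A)=\st\lim_{\n\to\alpha}\mu_{X_\n}(A_\n)$), and then for general bounded $\tilde g$ by sandwiching it internally between the simple internal functions $\tilde g_k:=\frac1k\lfloor k\tilde g\rfloor$ and $\tilde g_k+1/k$ and letting $k\to\infty$. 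Applying the sub-lemma to $\tilde g_E$ together with the elementary identity $\E_{x_\n\in X_\n}\big(|(E_\n)_{x_\n}|/|Y_\n|\big)=\mu_{X_\n\times Y_\n}(E_\n)$ yields $\int_X\mu_Y(E_x)\,d\mu_X(x)=\st\lim_{\n\to\alpha}\mu_{X_\n\times Y_\n}(E_\n)=\mu_{X\times Y}(E)$, which is the theorem for $f=1_E$ with $E$ internal.

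\emph{Step 3: bootstrap and conclusion.} Let $\mathcal{C}$ be the collection of $E\in\mathcal{B}_{X\times Y}$ for which $E_x\in\mathcal{B}_Y$ for every $x$, $x\mapsto\mu_Y(E_x)$ is $\mathcal{B}_X$-measurable, and $\int_X\mu_Y(E_x)\,d\mu_X(x)=\mu_{X\times Y}(E)$. Because $(E^c)_x=(E_x)^c$ and $(\bigsqcup_nE_n)_x=\bigsqcup_n(E_n)_x$, with monotone convergence handling the countable-union case, $\mathcal{C}$ is a $\lambda$-system; by Steps 1--2 it contains the $\pi$-system of internal sets, which generates $\mathcal{B}_{X\times Y}$, so $\mathcal{C}=\mathcal{B}_{X\times Y}$ by Dynkin's $\pi$--$\lambda$ theorem. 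This gives the theorem for indicator functions, hence by linearity for simple $\mathcal{B}_{X\times Y}$-measurable functions, and then for all bounded $\mathcal{B}_{X\times Y}$-measurable $f$ by uniform approximation by simple functions (a uniform limit preserves measurability of the slices, measurability of the inner integral, and the integral identity). The reversed-roles statement is symmetric, and the final corollary is the case $\mu_{X\times Y}(E)=0$ of Step 3, in which $\int_X\mu_Y(E_x)\,d\mu_X(x)=0$ with $\mu_Y(E_x)\ge0$ forces $\mu_Y(E_x)=0$ for $\mu_X$-a.e. $x$. The main obstacle is exactly the Step 2 sub-lemma on integrating standard parts of internal functions: this is where one must use the concrete construction of Loeb measure rather than an abstract measure, and it is also the reason one cannot simply quote the classical product-measure Fubini theorem, since $\mathcal{B}_X\times\mathcal{B}_Y$ is strictly smaller than $\mathcal{B}_{X\times Y}$ and a generic internal subset of $X\times Y$ does not lie in it, so the base case genuinely has to be handled by hand on the finite factors.
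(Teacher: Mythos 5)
Your proposal is correct and follows essentially the same route as the paper: reduce to indicator functions, handle internal sets as the base case (where slices are internal and the finite Fubini identity transfers through the ultralimit, with the integral of the standard part of the internal function $g$ computed by sandwiching between simple functions), and then bootstrap via a monotone class / Dynkin argument. Your Step 2 sub-lemma is simply a more explicit rendering of the paper's phrase ``approximating $g$ from above and below by simple functions,'' and your use of the $\pi$--$\lambda$ theorem in place of the paper's monotone class lemma for algebras is an immaterial variation.
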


As with the usual Fubini-Tonelli theorem, one can generalize this theorem from bounded functions to non-negative or absolutely integrable functions (after excluding some null set of $X$ where the $Y$ integral is infinite or divergent), but we will not need to do so here.

\begin{proof}  This will be a slight variant of the usual proof of the Fubini-Tonelli theorem.  By approximating $f$ by simple functions and using linearity, we may reduce to the case when $f = 1_E$ is an indicator function for some $E \in {\mathcal B}_{X \times Y}$; thus our task is now to show that the slices $E_x := \{ y \in Y: (x,y) \in E\}$ are ${\mathcal B}_Y$-measurable for every $x$, the function $x \mapsto \mu_Y(E_x)$ is ${\mathcal B}_X$-measurable, and that
$$ \mu_{X \times Y}(E) = \int_X \mu_Y(E_x)\ d\mu_X(x).$$
By the monotone class lemma, it suffices to show that the set of $E$ in ${\mathcal B}_{X \times Y}$ obeying these conclusions is closed under upward unions, downward intersections, and contains the algebra ${\mathcal B}_{X \times Y}^0$ of internal subsets of $X \times Y$.  The first two claims follow from several applications of the monotone convergence theorem in the three probability spaces $(X \times Y, {\mathcal B}_{X \times Y}, \mu_{X \times Y})$, $(X, {\mathcal B}_X, \mu_X)$, and $(Y, {\mathcal B}_Y, \mu_Y)$.  So we may assume that $E$ is an internal subset of $X \times Y$, thus $E = \prod_{\n \to \alpha} E_\n$ where $E_\n \subset X_\n \times Y_\n$ for an $\alpha$-large set of $\n$.  Since
$$ E_x = \prod_{\n \to \alpha} (E_\n)_{x_\n}$$
whenever $x = \lim_{\n \to \alpha} x_n$, we see that $E_x$ is internal and thus ${\mathcal B}_Y$-measurable for all $x \in X$.  Also, from \eqref{mux} we have $\mu_Y(E_x) = \operatorname{st}( g(x) )$, where $g$ is the internal function
$$ g(x) := {}^* \mu_Y(E_x) = \lim_{\n \to \alpha} \frac{1}{|Y_\n|} |\{ y_\n \in Y_\n: (x_\n,y_\n) \in E_\n \}|.$$
From the trivial Fubini-Tonelli theorem for finite sets, we have
$$ \frac{1}{|X_\n|} \sum_{x_\n \in X_\n} g(x_\n) = \mu_{X_\n \times Y_\n}(E_\n).$$
Taking ultralimits, and approximating $g$ from above and below by simple functions, we see that
$$ \int_X g(x)\ d\mu_X(x) = \mu_{X \times Y}(E)$$
and the claim follows.
\end{proof}

We can now construct a Loeb product space with infinitely many factors as follows.

\begin{theorem}[Loeb product spaces]\label{prod}  Let $A$ be an index set (possibly countable or even uncountable), and let $(X_a)_{a \in A}$ be a family of internal sets $X_a$, with each $X_a$ being the ultraproduct of finite non-empty sets.  Let $X_A := \prod_{a \in A} X_a$ be the product space, and for each finite subset $I$ of $A$, let $\pi_I: X_A \to X_I$ be the projection to the space $X_I := \prod_{a \in I} X_a$.  Let ${\mathcal B}_{X_A}$ denote the $\sigma$-algebra generated by the pullbacks $\pi_I^{-1}({\mathcal B}_{X_I}) := \{ \pi_I^{-1}(E_I): E_I \in {\mathcal B}_{X_I} \}$ for all finite subsets $I$ of $A$; equivalently, ${\mathcal B}_{X_A}$ is generated by the cylinder sets from Lemma \ref{sigma-tych}.  Then there exist a unique probability measure $\mu_{X_A}$ on ${\mathcal B}_{X_A}$ whose pushforward measures $(\pi_I)_* \mu_{X_A}$ agree with $\mu_{X_I}$ for each finite subset $I$ of $A$, thus
\begin{equation}\label{muxa}
 \mu_{X_A}( \pi_I^{-1}( E_I ) ) = \mu_{X_I}(E_I)
\end{equation}
for all $E_I \in {\mathcal B}_{X_I}$.
\end{theorem}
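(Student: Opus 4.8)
The plan is to treat Theorem \ref{prod} as a version of the Kolmogorov extension theorem, adapted to the $\sigma$-topological machinery developed above. First I would define a set function $\mu_0$ on the Boolean algebra ${\mathcal A}$ consisting of all finite unions of cylinder sets $\pi_I^{-1}(E_I)$ (with $I \subset A$ finite and $E_I \in {\mathcal B}_{X_I}$); this is genuinely an algebra, since the complement and the finite intersections of cylinder sets are again cylinder sets once one pulls everything back to a common finite index set. On a single cylinder set I set $\mu_0(\pi_I^{-1}(E_I)) := \mu_{X_I}(E_I)$ and extend finitely additively. For this to be well defined and finitely additive, the only thing that needs checking is the \emph{consistency} of the family $(\mu_{X_I})$: that $(\pi_{I\leftarrow J})_* \mu_{X_J} = \mu_{X_I}$ whenever $I \subset J \subset A$ are finite. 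Writing $X_J = X_I \times X_{J\setminus I}$ as internal sets (and $\mu_{X_J}$ as the associated Loeb measure), this is immediate from the Fubini--Tonelli theorem for Loeb measure (Theorem \ref{ftl}): for $E = E_I \times X_{J \setminus I}$ the slice $E_x$ is either all of $X_{J\setminus I}$ or empty, so $\mu_{X_J}(E) = \int_{X_I} \mu_{X_{J\setminus I}}(E_x)\, d\mu_{X_I}(x) = \mu_{X_I}(E_I)$. Uniqueness of the eventual extension $\mu_{X_A}$ is then the standard fact that two probability measures agreeing on a generating algebra agree on the generated $\sigma$-algebra, so it remains only to establish existence.

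By the Hahn--Kolmogorov extension theorem, existence reduces to showing that $\mu_0$ is countably additive on ${\mathcal A}$; as $\mu_0$ is already finitely additive, this is equivalent to continuity at the empty set, i.e.\ to the assertion that whenever $(E_n)$ is a decreasing sequence in ${\mathcal A}$ with $\inf_n \mu_0(E_n) \geq \eps > 0$, the joint intersection $\bigcap_n E_n$ is non-empty. The tool that here replaces inner regularity by compact sets in the classical proof is \emph{inner regularity of Loeb measure by internal sets}: for any Loeb measurable $E \subset X$ and any $\delta > 0$ there is an internal $K \subset E$ with $\mu_X(K) \geq \mu_X(E) - \delta$. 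This follows from Lemma \ref{appx}, which produces an internal $E'$ with $\mu_X(E \Delta E') = 0$, together with the countable-saturation argument in its proof: the null set $E' \setminus E$ is contained in a single internal set $S$ of arbitrarily small measure, and then $K := E' \setminus S$ is internal, contained in $E' \cap E \subset E$, and of measure at least $\mu_X(E) - \delta$.

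With this in hand I would run the classical argument in the $\sigma$-topological category. Write $E_n = \pi_{I_n}^{-1}(F_n)$ with the $I_n$ finite and increasing (enlarging index sets and using that the projections are internal, hence continuous, by Lemma \ref{countsat}) and $F_n \in {\mathcal B}_{X_{I_n}}$; choose internal $K_n \subset F_n$ with $\mu_{X_{I_n}}(F_n \setminus K_n) < \eps 2^{-n-1}$, and set $D_n := \bigcap_{m \le n} \pi_{I_m}^{-1}(K_m)$. A union bound gives $\mu_0(E_n \setminus D_n) < \eps/2$, hence $\mu_0(D_n) > \eps/2$, so $D_n$ is non-empty; moreover $D_n = \pi_{I_n}^{-1}(\tilde K_n)$ where $\tilde K_n := \bigcap_{m \le n} \pi_{I_m \leftarrow I_n}^{-1}(K_m)$ is a non-empty internal subset of $X_{I_n}$ with $\tilde K_n \subset K_n \subset F_n$, and $\tilde K_{n+1} \subset \pi_{I_n \leftarrow I_{n+1}}^{-1}(\tilde K_n)$. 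Now localize to the at most countable index set $A' := \bigcup_n I_n$: each finite subproduct of the internal sets $(X_a)_{a \in A'}$ is itself internal, hence a countably compact $T_1$ $\sigma$-topological space by Lemma \ref{countsat}, and the projection maps between such subproducts are internal functions, hence continuous and closed, again by Lemma \ref{countsat}. Thus the $\sigma$-Tychonoff theorem (Lemma \ref{sigma-tych}) applies to $(X_a)_{a \in A'}$, and the countably closed cylinder sets $\{ x \in X_{A'} : \pi_{I_n}(x) \in \tilde K_n \}$ decrease and are non-empty, so by countable compactness their joint intersection contains a point $x'$; extending $x'$ arbitrarily to a point $x \in X_A$ (possible as each $X_a$ is non-empty) gives $\pi_{I_n}(x) \in \tilde K_n \subset F_n$, i.e.\ $x \in E_n$, for all $n$. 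Hence $\bigcap_n E_n \neq \emptyset$, as required, and Hahn--Kolmogorov produces the measure $\mu_{X_A}$ satisfying \eqref{muxa}.

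I expect the main obstacle to be exactly this continuity-at-$\emptyset$ step, and in particular the need to orchestrate three earlier results simultaneously — inner regularity by internal sets (via Lemma \ref{appx}), the closedness of internal maps (the last assertion of Lemma \ref{countsat}), and the $\sigma$-Tychonoff theorem (Lemma \ref{sigma-tych}) — together with the bookkeeping required to handle a possibly uncountable $A$ by restricting to the countable index set generated by the cylinders in play. Everything else (the algebra structure of ${\mathcal A}$, well-definedness of $\mu_0$, finite additivity, and uniqueness) is routine in the style of the classical Kolmogorov extension theorem.
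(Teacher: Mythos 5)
Your proof is correct and follows essentially the same route as the paper: the paper's own argument is a three-line sketch that invokes Lemma \ref{countsat} and Lemma \ref{sigma-tych} to get countable compactness of the cylinder $\sigma$-topology, asserts that \eqref{muxa} therefore defines a premeasure on the Boolean algebra of cylinder sets, and concludes via the Carath\'eodory/Hahn--Kolmogorov extension theorem. What you have written is a faithful expansion of that sketch, supplying the consistency check via Theorem \ref{ftl}, the inner regularity by internal sets via Lemma \ref{appx}, and the reduction to a countable index set needed to apply Lemma \ref{sigma-tych} --- all details the paper leaves implicit in its ``hence.''
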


\begin{proof}  By Lemma \ref{countsat} and Lemma \ref{sigma-tych}, the cylinder sets on $X_A$ generate a countably compact $T_1$ $\sigma$-topology on $X_A$.  Hence the function $\mu_{X_A}$ defined on the boolean algebra of cylinder sets by \eqref{muxa} is a premeasure of total mass $1$.  The claim then follows from the Carath\'eodory extension theorem (or Hahn-Kolmogorov extension theorem).
\end{proof}

\begin{remark}\label{prada}
We will refer to the probability space $(X_A, {\mathcal B}_{X_A}, \mu_{X_A})$ generated by Theorem \ref{prod} as the \emph{Loeb product space} on $X_A$.  In general, it will be a strict extension of the usual product probability space $(X, \prod_{a \in A} {\mathcal B}_a, \prod_{a \in A} \mu_a)$.  If we use this Loeb product space as the sample space for probability theory, then the coordinate projections from $X_A$ to each factor space $X_a$ can be interpreted as a family $(x_a)_{a \in A}$ of random variables, with each finite subtuple $(x_a)_{a \in I}$ being distributed with the law of the Loeb probability space $(X_I, {\mathcal B}_I, \mu_I)$.  In particular, for any ${\mathcal B}_{X_I}$-measurable set $E_I$, we have
$$ \P( (x_a)_{a \in I} \in E_I) = \mu_{X_I}( E_I ).$$
This property is stronger than joint independence of the $x_a$, because the $\sigma$-algebra ${\mathcal B}_{X_I}$ is significantly larger than the product $\sigma$-algebra $\prod_{a \in I} {\mathcal B}_{X_a}$.  
\end{remark}

\section{Proof of Theorem \ref{loaded}}\label{lo-sec}.

We now prove Theorem \ref{loaded}, whose proof is simpler than that of Theorem \ref{main}, but already illustrates the key strategies used in that latter proof, in particular the use of an ultraproduct correspondence principle to reduce the problem to an ergodic theoretic one.  More precisely, we will deduce Theorem \ref{loaded} from the following recurrence result:

\begin{proposition}[Double recurrence]\label{p1}  Let $G$ be an infinite (and possibly uncountable) group that acts on a probability space $(X,\mu)$ by two commuting measure-preserving actions $(L_g)_{g \in G}, (R_g)_{g \in G}$ (thus $\mu(L_g E) = \mu(R_g E) = \mu(E)$, $L_g L_h = L_{gh}$, $R_g R_h = R_{gh}$, and $L_g R_h = L_h R_g$ for all $g,h \in G$ and measurable $E \subset X$), and let $A$ be a subset of $X$ of positive measure.  Then there exists a non-trivial group element $g \in G$ such that
$$ \mu(A \cap L_g A \cap L_g R_g A) > 0.$$
In particular, $A \cap L_g A \cap L_g R_g A$ is non-empty.
\end{proposition}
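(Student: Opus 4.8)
The plan is to establish Proposition \ref{p1} using the machinery of idempotent ultrafilters, exploiting the fact that $G$ acts by two \emph{commuting} measure-preserving actions. Fix a non-principal ultrafilter $p \in \beta G \setminus G$ on the group $G$ that is \emph{idempotent} with respect to the semigroup structure on $\beta G$ induced by the group multiplication on $G$ (such $p$ exist by the Ellis--Namakura lemma, since $\beta G$ is a compact left-topological semigroup). Associated to $p$ we have the operators of $p$-limits: for a bounded sequence $(h_g)_{g \in G}$ in a weak-topology-compact space, $p\text{-}\lim_{g} h_g$ denotes the limit along $p$. The key point is that, because $L$ and $R$ are commuting actions, the assignment $g \mapsto L_g R_g$ is a genuine \emph{action} of $G$ on $X$ (one has $L_g R_g L_h R_h = L_{gh} R_{gh}$ precisely because $L$ and $R$ commute), and moreover $L$ and $L R := (L_g R_g)_{g\in G}$ are two commuting measure-preserving actions of $G$.

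The main step is to show that, with $T_g := L_g$ and $S_g := L_g R_g$, the two-dimensional average along the idempotent ultrafilter converges to something controlled by a projection. Concretely, I would consider for $f \in L^2(X,\mu)$ the limit
$$ P f := \operatorname*{w-lim}_{g \to p} T_g S_g^{-1} f = \operatorname*{w-lim}_{g \to p} R_g^{-1} f $$
Wait — more carefully, the relevant object is the pair correlation. Following the standard idempotent-ultrafilter argument for double recurrence (as in the work of Bergelson--McCutcheon and others), one shows that
$$ p\text{-}\lim_g \int_X 1_A \cdot T_g 1_A \cdot S_g 1_A \, d\mu \geq \left( \int_X 1_A \, d\mu \right)^{c} $$
for some positive exponent, or at least that this $p$-limit is strictly positive. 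The mechanism: let $f = 1_A$ and set $g_1 \mapsto T_{g_1} f$, and use the idempotency $p = p \cdot p$ to split $p\text{-}\lim_{g}$ as $p\text{-}\lim_{g'} p\text{-}\lim_{g''}$ evaluated at $g = g' g''$. Because $T$ and $S$ are commuting actions, $T_{g'g''} = T_{g'} T_{g''}$ and $S_{g'g''} = S_{g'}S_{g''}$, and the inner $p$-limit produces a $T\times S$-invariant (hence in particular conjugation-stable) vector $\tilde f = \operatorname*{w-lim}_{g'' \to p} T_{g''} S_{g''} f$ onto which things project, with $\int \tilde f \, d\mu = \mu(A) > 0$ and $\tilde f \geq 0$. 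One then gets a lower bound of the shape $\mu(A) \cdot \langle 1_A, \tilde{\tilde f}\rangle > 0$ by a Cauchy--Schwarz / positivity argument. Since the $p$-limit of $\mu(A \cap T_g A \cap S_g A)$ is strictly positive, the set of $g$ for which $\mu(A \cap L_g A \cap L_g R_g A) > 0$ is $p$-large, hence infinite (as $p$ is non-principal), and in particular contains a non-trivial element. (Note $S_g = L_g R_g$ and $T_g = L_g$, so $T_g A = L_g A$ and $S_g A = L_g R_g A$, matching the statement.)

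The main obstacle I anticipate is carefully justifying the interchange of $p$-limits via idempotency in the non-amenable (indeed possibly uncountable, free-group-like) setting: one must verify that all the relevant sequences take values in appropriate weakly compact sets so that $p$-limits exist, that weak limits of nonnegative functions are nonnegative with the expected integral, and that the ``passage to the invariant factor'' step genuinely produces a $\tilde f$ with $\int \tilde f\,d\mu = \mu(A)$ rather than something smaller (this uses that $T$ and $S$ are \emph{measure-preserving}, so $\int T_{g''}S_{g''} f \, d\mu = \int f\,d\mu$ for every $g''$, and the $p$-limit preserves this). A secondary subtlety is ensuring the element $g$ produced is genuinely \emph{non-trivial}: since $G$ is infinite and $p$ is non-principal, the $p$-large set of good $g$ cannot be $\{1\}$, so this is automatic. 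The commutativity hypothesis $L_g R_h = L_h R_g$ is used crucially in two places: to make $g \mapsto L_g R_g$ an action at all, and to make $T$ and $S$ commute so that the double $p$-limit telescopes correctly.
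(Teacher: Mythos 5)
The paper does not prove this proposition from first principles: it reduces to countable $G$ (every infinite group contains a countably infinite subgroup) and then invokes \cite[Theorem 1.5]{berg4}, the Bergelson--McCutcheon non-commutative Roth theorem, which is precisely the idempotent-ultrafilter double recurrence statement you are trying to sketch. So you have identified the right machinery, but your sketch has two genuine problems. First, a concrete error: $T_g := L_g$ and $S_g := L_g R_g$ are \emph{not} commuting actions. One computes $T_g S_h = L_g L_h R_h = L_{gh} R_h$ while $S_h T_g = L_h R_h L_g = L_{hg} R_h$, so $T$ and $S$ commute only when $G$ is abelian. The correct facts are that $L$ and $R$ commute and that $g \mapsto L_g R_g$ is therefore an action; but $L$ and $(L_g R_g)_{g \in G}$ do not commute with each other. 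Since you state that the commutativity of $T$ and $S$ is used ``crucially'' to make the double $p$-limit telescope, the mechanism as described does not go through. (The telescoping $T_{g'g''} = T_{g'} T_{g''}$, $S_{g'g''} = S_{g'} S_{g''}$ in fact only needs each of $T, S$ separately to be a homomorphism, so this may be repairable, but the argument must be restated.)

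Second, and more seriously, the heart of the matter --- that $p\!-\!\lim_g \mu(A \cap T_g A \cap S_g A) > 0$ --- is asserted rather than proved. For \emph{single} recurrence the idempotent ergodic theorem gives $p\!-\!\lim_g \langle f, U_g f \rangle = \|Pf\|^2 \geq (\int_X f\, d\mu)^2$ because $P$ is an orthogonal projection fixing constants; for \emph{double} recurrence no such two-line argument exists. One needs (a) a van der Corput estimate along $p$ showing that the triple correlation is unchanged when the function in one slot is replaced by its projection onto the factor of functions rigid under the appropriate (relative diagonal) action, and (b) a separate and genuinely delicate positivity argument on that rigid factor. Your proposed lower bound of the shape $\mu(A) \cdot \langle 1_A, \tilde{\tilde f}\rangle$ does not obviously close: a nonnegative $\tilde{\tilde f}$ with $\int_X \tilde{\tilde f}\, d\mu = \mu(A)$ can perfectly well be orthogonal to $1_A$ unless one exploits the projection structure, which is exactly where the work lies. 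These steps constitute the main content of \cite[Theorem 1.5]{berg4}, so a complete proof along your lines would amount to reproving that theorem. A minor further point: you must also ensure the idempotent $p$ is non-principal (Ellis--Namakura applied to all of $\beta G$ could return the principal idempotent at the identity); this is standard, since $\beta G \setminus G$ is a closed subsemigroup, but it should be said.
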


\begin{proof} Since every infinite group contains a countably infinite subgroup, we may assume without loss of generality that $G$ is countable.  The claim now follows from \cite[Theorem 1.5]{berg4} (note carefully that this result does \emph{not} require $G$ to be amenable).  In fact, that theorem yields the stronger result that there exists $\lambda>0$ for which the set $\{ g: \mu(A \cap L_g A \cap L_g R_g A) > \lambda \}$ is both left-syndetic and right-syndetic (and is even central* and inverse central*; see \cite[p. 1256]{berg4} for definitions).   
\end{proof}

Now we can begin the proof of Theorem \ref{loaded}.  In order to emphasise the relationship with the ergodic theorem in Proposition \ref{p1}, we introduce the uniform probability measure $\mu_G$ on a finite group $G$, thus
$$ \mu_G(E) := |E|/|G|$$
for all $E \subset G$, and $L^2(G) = L^2(G,\mu_G)$.  We also introduce the left-shift $L_g$ and right-shift $R_g$ actions on $G$ by the formulae
\begin{equation}\label{lager}
 L_g x := gx; R_g x := xg^{-1};
\end{equation}
these are commuting actions of $G$ on itself.  They induce the associated Koopman operators on $L^2(G)$ by the formulae
$$ L_g f(x) := f(g^{-1} x)$$
and
$$ R_g f(x) := f(x g),$$
which are of course unitary operators.  Observe that for any $g \in G$, we have
$$ \mu_G( \{ x \in G: x, xg, gx \in A \} ) = \mu_G( A \cap L_g A \cap L_g R_g A )$$
and so our task is to show that for any $\delta>0$ there exist $N,\eps>0$ such that if $G$ is a finite group with $|G| \geq N$ and $A$ is a subset of $G$ with $\mu_G(A) \geq \delta$, then there exists a non-trivial $g \in G$ such that
$$ \mu_G( A \cap L_g A \cap L_g R_g A ) \ge \eps.$$

Suppose for sake of contradiction that the claim failed.  Carefully negating the quantifiers, and using the axiom of choice, we may then find a $\delta > 0$, a sequence $G_\n$ of finite groups for each $\n \in \N := \{1,2,3,\ldots\}$ with $|G_\n| \geq \n$, and subsets $A_\n$ of $G_\n$, with the properties that
\begin{equation}\label{mud}
 \mu_{G_\n}(A_\n) \geq \delta
 \end{equation}
and
\begin{equation}\label{rga}
 \mu_{G_\n}( A_\n \cap L_{\n,g_\n} A_\n \cap L_{\n,g_{\n}} R_{\n,g_{\n}} A_\n ) \leq \frac{1}{\n}
 \end{equation}
for all non-trivial $g_\n \in G_\n$, where we use $L_{\n,g_\n}, R_{\n,g_\n}$ to denote the left and right actions for $G_\n$. 

Fix all the above data $G_\n, A_\n, \delta$, which we can view as a sequence of finitary ``approximate counterexamples'' to Theorem \ref{loaded}.  The next step is to use an ultraproduct construction to pass from this sequence of approximate counterexamples to a genuine counterexample to Theorem \ref{loaded} and obtain the desired contradiction.  Versions of this ``compactness and contradiction'' strategy of course appear in many arguments, including some versions of the Furstenberg correspondence principle (see e.g. \cite{berg5}, \cite{berg6}); see also \cite{szegedy} for a construction closely related to the one used here. One could also formalize the arguments here in the language of nonstandard analysis, but we will avoid doing so in this paper in order to reduce the possibility of confusion.

As in the previous section, we fix a non-principal ultrafilter $\alpha \in \beta \N \backslash \N$ on the natural numbers.  We may now form the ultraproducts $G := \prod_{\n \to \alpha} G_\n$ and $A := \prod_{\n \to \alpha} A_\n$.  As the $G_\n$ were all groups, the ultraproduct $G$ is also a group (with the group and inversion operations being the ultralimits of the associated operations on $G_\n$).  On the other hand, as each $G_\n$ had cardinality at least $\n$, we easily verify that $G$ has cardinality at least $\n$ for each $\n$, so that $G$ is now an infinite group (indeed, it will necessarily uncountable, since it is countably compact thanks to Lemma \ref{countsat}).  The set $A$ is of course a subset of $G$.  Our objective is to use this data to build a counterexample to Theorem \ref{loaded}.

Let $(X,{\mathcal X},\mu) := (G, {\mathcal B}_G, \mu_G)$ be the Loeb probability space associated to $G$.  (We will use two different symbols $G,X$ to describe the same object here, in order to emphasise the conceptual distinction between the underlying space $X$, and the group $G$ that acts on that space.)
It is easy to see that the left and right actions $(L_g)_{g \in G}, (R_g)_{g \in G}$ are measure-preserving actions on $X$, thus for any $g \in G$, the maps $E \mapsto L_g E$ and $E \mapsto R_g E$ are measure-preserving on $(X,{\mathcal X},\mu)$.  Note however that $(g,x) \mapsto L_g x$ and $(g,x) \mapsto R_g x$ are jointly measurable as maps from $G \times X$ to $X$ only if one uses the Loeb product $\sigma$-algebra ${\mathcal B}_{G \times X}$ on $G \times X$, rather than the product Loeb $\sigma$-algebra ${\mathcal B}_G \times {\mathcal B}_X$.  To avoid technical issues associated to this, we will not perform any operation (e.g. integration in $G$ rather than in $X$) that would require joint measurability of the actions.

\begin{remark}  The Loeb probability measure $\mu_G$ on the group $G$ is closely analogous to a Haar probability measure on a compact group, with the main difference being that $G$ is only a (countably) compact group with respect to a $\sigma$-topological structure, rather than a genuinely topological structure.  (For instance, it is easy to see that the group operations $g \mapsto g^{-1}$, $(g,h) \mapsto gh$ are continuous with respect to the $\sigma$-topological structures on $G$ and $G \times G$.)
\end{remark}

The set $A$ is an internal subset of $X$ and is hence measurable in $(X,{\mathcal X},\mu)$. From \eqref{mud}, \eqref{mux} we see that
$$ \mu(A) \geq \delta;$$
in particular, $A$ has positive measure.  Applying Proposition \ref{p1}, we can thus find a non-trivial element $g$ of $G$ such that
$$ \mu(A \cap L_g A \cap L_g R_g A) > \eps$$
for some $\eps>0$.  Now, if we write $g = \lim_{\n \to \alpha} g_\n$, then $g_\n$ is non-trivial for an $\alpha$-large set of $\n$.
Furthermore, from \eqref{mux} we have
$$ \mu(A \cap L_g A \cap L_g R_g A) = \st \lim_{\n \to \alpha} \mu_{G_\n}(A_\n \cap L_{\n,g_\n} A_\n \cap L_{\n,g_\n} R_{\n,g_\n} A_\n) $$
and thus
$$ \mu_{G_\n}(A_\n \cap L_{\n,g_\n} A_\n \cap L_{\n,g_\n} R_{\n,g_\n} A_\n) > \eps$$
for an $\alpha$-large set of $\n$.  But this contradicts \eqref{rga} (note that as $\alpha$ is non-principal, any $\alpha$-large subset of $\N$ contains arbitrarily large elements).  This contradiction concludes the proof of Theorem \ref{loaded}. $\Box$

\begin{remark}  It is also possible to replace the use of ultraproducts in the above argument with applications of the compactness and completeness theorems in first-order logic instead, to obtain a countably saturated model\footnote{A model is \emph{countably saturated} if, whenever one has a countable family of sentences $S_1,S_2,\ldots$ with the property that any finite number of these sentences are simultaneously satisfiable, then the entire family is simultaneously satisfiable; this property is the model-theoretic analogue of Lemma \ref{countsat}.} $(G,A,\mu,L,R)$ of a group $G$ and set $A$ that formally lies in a space $X$ with a probability measure $\mu$ and commuting actions $(L_g)_{g \in G}, (R_g)_{g \in G}$, such that this model is a limit of the finitary models $(G_\n,A_\n,\mu_{G_\n}, L_\n, R_\n)$ in the sense that any statement in first-order logic which holds  for all but finitely many of the finitary models, is also true in the countably saturated model.  We leave the details to the interested reader.
\end{remark}

\section{An ergodic theorem}

We now begin the proof of Theorem \ref{main}, which follows a similar strategy to that of Theorem \ref{loaded} but with some additional complications.  In particular, we will need to replace the multiple recurrence theorem in Proposition \ref{p1} with a convergence theorem which, due to the inherent independence properties of quasirandom groups in our application, takes the shape of a relative weak mixing result along a properly chosen IP system.  To prove this theorem, it will be convenient to use the machinery of idempotent ultrafilters and their associated limits.

We turn to the details.  Given a group $G$, define an \emph{IP system} in $G$ to be a set\footnote{Strictly speaking, the IP system should be a tuple consisting of the set $H$ together with the generators $g_1,g_2,\ldots$, but we shall abuse notation and refer to the system simply by the set $H$.} of the form
$$ H = \{ g_{i_1} \ldots g_{i_r}: r \geq 1; 1 \leq i_1 < i_2 < \ldots < i_r \}$$
where $g_1,g_2,\ldots$ are an infinite sequence of elements in $G$ (not necessarily distinct).  Inside such an IP system, we can form the sub-IP system
$$ H_n = \{ g_{i_1} \ldots g_{i_r}: r \geq 1; n \leq i_1 < i_2 < \ldots < i_r \}$$
for any natural number $n$.  Given a sequence $(x_g)_{g \in H}$ of points in a Hausdorff topological space $Z$ indexed by $H$ and a point $x \in X$, we say that $x_g$ \emph{converges along $H$} to $x$, and write $H\!-\!\lim_g x_g = x$, if for every neighborhood $V$ of $x$, there exists $n$ such that $x_g \in V$ for all $g \in H_n$.

The variant of Proposition \ref{p1} that we will need is

\begin{theorem}\label{ergo}  Let $(X,{\mathcal X},\mu)$ be a probability space, and let $G$ be a group.  Let $(L_g)_{g \in G}$ and $(R_g)_{g \in G}$ be two measure-preserving actions of $G$ on $X$, which commute in the sense that $L_g R_h = R_h L_g$ for all $g,h \in G$.  Define $L_g f := f \circ L_g^{-1}$ and $R_g f := f \circ R_g^{-1}$ for $f \in L^\infty(X,\mu)$.  Let $H$ be an IP system in $G$, and $f_1, f_2, f_3$ be elements of $L^\infty(X,\mu)$.  Assume the following mixing properties:
\begin{itemize}
\item[(i)]  (Left mixing) For any $f,f' \in L^2(X,{\mathcal X},\mu)$, one has
$$ H\!-\!\lim_g \int_X f L_g f'\ d\mu = \left(\int_X f\ d\mu\right) \left(\int_X f'\ d\mu\right).$$
(In particular, we assume that this $H$-limit exists.)
\item[(ii)]  (Right mixing) For any $f,f' \in L^2(X,{\mathcal X},\mu)$, one has
$$ H\!-\!\lim_g \int_X f R_g f'\ d\mu = \left(\int_X f\ d\mu\right) \left(\int_X f'\ d\mu\right).$$
\item[(iii)]  ($f_3$ orthogonal to diagonally rigid functions)  Let $f \in L^2(X,{\mathcal X},\mu)$ be any function with the rigidity property that, for any $\eps>0$ and for any natural number $n$, there exists $g \in H_n$ such that $\| L_g R_g f - f \|_{L^2(X,{\mathcal X},\mu)} \leq \eps$.  Then $\int_X f f_3\ d\mu = 0$.
\end{itemize}
Then for any $\eps>0$ and natural number $n$, there exists $g \in H_n$ such that
$$ |\int_X f_1 (L_g f_2) (L_g R_g f_3)\ d\mu| \leq \eps.$$
\end{theorem}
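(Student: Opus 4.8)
My plan is to prove Theorem \ref{ergo} by the van der Corput differencing method, transplanted to the IP setting through the machinery of idempotent ultrafilters. First I fix a non-principal idempotent ultrafilter $p$ on the partial semigroup $(\mathcal F,\cup)$ of finite non-empty subsets of $\N$ which is concentrated on the tails of the IP system, i.e.\ $\{\alpha\in\mathcal F:\min\alpha\ge n\}\in p$ for every $n$; such a $p$ exists by the Ellis--Namakura idempotent theorem applied to the compact subsemigroup $\bigcap_n\overline{\{\alpha:\min\alpha\ge n\}}$ of $\beta\mathcal F$. Write $g_\alpha:=g_{i_1}\cdots g_{i_r}$ for $\alpha=\{i_1<\cdots<i_r\}$, so that $g_{\alpha\cup\beta}=g_\alpha g_\beta$ whenever $\max\alpha<\min\beta$, and set $I(\alpha):=\int_X f_1\,(L_{g_\alpha}f_2)(L_{g_\alpha}R_{g_\alpha}f_3)\,d\mu$. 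It suffices to prove $p\text{-}\lim_\alpha I(\alpha)=0$: then for any $\eps,n$ the two $p$-large sets $\{\alpha:|I(\alpha)|\le\eps\}$ and $\{\alpha:\min\alpha\ge n\}$ intersect, and any $\alpha$ in the intersection yields $g:=g_\alpha\in H_n$ with $|I(\alpha)|\le\eps$.

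Put $u_\alpha:=(L_{g_\alpha}f_2)(L_{g_\alpha}R_{g_\alpha}f_3)\in L^\infty(X,\mu)$, a family bounded in the Hilbert space $L^2(X,\mu)$, and let $V$ be the weak limit of $u_\alpha$ along $p$ (it exists by weak compactness of bounded sets), so that $p\text{-}\lim_\alpha I(\alpha)=\langle f_1,V\rangle$. Since constants are diagonally rigid, hypothesis (iii) with $f=1$ gives $\int_X f_3\,d\mu=0$; then, using that $L_{g_\alpha}$ is measure-preserving together with right-mixing (ii), $\langle 1,V\rangle=p\text{-}\lim_\alpha\int_X f_2\,(R_{g_\alpha}f_3)\,d\mu=(\int f_2)(\int f_3)=0$, so $V\perp 1$ and $\langle f_1,V\rangle$ is unchanged upon subtracting from $f_1$ its mean. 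Since $f_1$ does not enter the hypotheses at all, it suffices to prove the stronger assertion $V=0$. For this I invoke the IP van der Corput lemma from the idempotent-ultrafilter toolkit (cf.\ \cite{berg1,berg2,berg3,berg4} and the Furstenberg--Katznelson IP machinery): to conclude $V=0$ it is enough that the IP-autocorrelations of $(u_\alpha)$ — limits of the form $p\text{-}\lim_\beta\langle u_{\gamma\cup\beta},u_\beta\rangle$, suitably ordered — tend to $0$ as $\gamma\to p$. Expanding such an autocorrelation via $L_{g_{\gamma\cup\beta}}=L_{g_\gamma}L_{g_\beta}$, $R_{g_{\gamma\cup\beta}}=R_{g_\gamma}R_{g_\beta}$ and the commutation $LR=RL$, peeling off the outermost operator common to the two entries, and then applying left-mixing (i) to the residual $L_{g_\beta}$-factor and right-mixing (ii) to the residual $R_{g_\beta}$-factor so that each is replaced by its mean, the autocorrelation collapses to a product of the shape $\langle L_{g_\gamma}f_2,f_2\rangle\,\langle L_{g_\gamma}R_{g_\gamma}f_3,f_3\rangle$. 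The first factor is bounded by $\|f_2\|_{L^2}^2$, so everything reduces to showing $p\text{-}\lim_\gamma\langle L_{g_\gamma}R_{g_\gamma}f_3,f_3\rangle=0$.

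To finish, set $T_g:=L_gR_g$; by the commutation hypothesis this is a unitary representation of $G$ on $L^2(X,\mu)$, since $T_{gh}=T_gT_h$, and the bounded orbit $(T_{g_\gamma}f_3)$ has a weak $p$-limit $P_3$. By idempotency of $p$ and weak--weak continuity of bounded operators, $p\text{-}\lim_\beta\langle T_{g_\beta}P_3,v\rangle=p\text{-}\lim_\beta p\text{-}\lim_\gamma\langle T_{g_{\beta\cup\gamma}}f_3,v\rangle=\langle P_3,v\rangle$ for all $v$, i.e.\ $T_{g_\beta}P_3\to P_3$ weakly along $p$; since each $T_{g_\beta}$ is unitary this upgrades to $p\text{-}\lim_\beta\|T_{g_\beta}P_3-P_3\|_{L^2}^2=2\|P_3\|_{L^2}^2-2\|P_3\|_{L^2}^2=0$. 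Intersecting $\{\beta:\|T_{g_\beta}P_3-P_3\|_{L^2}\le\eps\}\in p$ with $\{\beta:\min\beta\ge n\}\in p$ shows that for every $\eps>0$ and $n$ there is $g\in H_n$ with $\|L_gR_gP_3-P_3\|_{L^2}\le\eps$; that is, $P_3$ is a diagonally rigid function. Hence hypothesis (iii) gives $\langle P_3,f_3\rangle=0$, which is precisely $p\text{-}\lim_\gamma\langle L_{g_\gamma}R_{g_\gamma}f_3,f_3\rangle=0$, completing the proof.

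The main obstacle is the van der Corput step: pinning down the exact form of the IP van der Corput lemma applicable here, and then carrying out the collapse of the autocorrelation cleanly despite the non-commutativity of $G$ — in particular, tracking the conjugation terms $g_\beta^{-1}g_\gamma g_\beta$ that appear when $L_{g_\beta}^{-1}$ is commuted past $L_{g_\gamma}$, and verifying that left-mixing (i) and right-mixing (ii) combine exactly as needed so that the whole expression reduces to the single conjugation-autocorrelation $\langle T_{g_\gamma}f_3,f_3\rangle$ of $f_3$. The remaining ingredients — existence of the tail-idempotent $p$, the reduction to $V=0$, and the rigidity of $P_3$ — are routine manipulations with ultrafilter limits.
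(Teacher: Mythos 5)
Your proposal is correct and follows the same overall strategy as the paper's proof: organize $H$-limits via an idempotent ultrafilter, apply a van der Corput lemma to the family $u_\alpha = (L_{g_\alpha} f_2)(L_{g_\alpha}R_{g_\alpha}f_3)$, collapse the autocorrelation using the measure-preserving shift and the commutation $LR=RL$ to the product $\bigl(\int_X f_2\, L_{h} f_2\,d\mu\bigr)\bigl(\int_X f_3\, L_{h}R_{h} f_3\,d\mu\bigr)$, and use the parallelogram-law upgrade from weak to strong rigidity to bring hypothesis (iii) to bear. The one genuine difference is organizational: the paper first splits $f_2$ into its mean and mean-zero parts, disposes of the constant part via the idempotent ergodic theorem (Theorem \ref{iet}) and then, for mean-zero $f_2$, kills the \emph{first} factor $\int_X f_2 L_h f_2\,d\mu \to (\int_X f_2\,d\mu)^2 = 0$; you instead keep $f_2$ arbitrary, bound the first factor crudely, and kill the \emph{second} factor by showing the weak limit $P_3$ of $T_{g_\gamma}f_3$ is diagonally rigid and hence orthogonal to $f_3$ by (iii). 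Your route avoids the case split entirely and is, if anything, slightly cleaner; it uses exactly the ingredients the paper already supplies (the van der Corput lemma you need is Theorem \ref{vdc}, i.e.\ \cite[Theorem 2.3]{berg4}, read through the identification $g = g_\beta$, $h = g_\gamma$, $gh = g_{\beta\cup\gamma}$; your rigidity argument for $P_3$ is the relevant special case of Theorem \ref{iet}). Your flagged worry about conjugation terms $g_\beta^{-1}g_\gamma g_\beta$ is unfounded: since $L$ is a representation, one simply applies the measure-preserving map $L_{g_\beta}^{-1}$ to the whole integrand, turning $L_{g_{\beta\cup\gamma}}$ into $L_{g_\gamma}$ and $L_{g_\gamma}R_{g_\beta g_\gamma}$ into $R_{g_\beta}L_{g_\gamma}R_{g_\gamma}$, so the collapse is exactly the product you wrote down with no conjugations appearing. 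The only point to be careful about, which you correctly flag, is the ordering convention $\max\beta < \min\gamma$ so that $g_{\beta\cup\gamma} = g_\beta g_\gamma$ matches the form $\langle f_{gh}, f_g\rangle$ required by the van der Corput hypothesis; with that convention fixed, everything goes through.
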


Note that we allow $G$ to be uncountable.  However, observe that we may without loss of generality restrict $G$ to the group generated by $H$, so we may assume without loss of generality that $G$ is at most countable.  Note also that the left and right mixing properties are assumed to apply to \emph{all} $L^2$ functions $f,f'$, not just the three given functions $f_1,f_2,f_3$, but the diagonal mixing property (or more precisely, the orthogonality to diagonally rigid function property) is only imposed for the specific function $f_3$.  In our application, we cannot impose diagonal mixing for arbitrary functions, because of the non-ergodicity of the conjugation action (any subset of a group $G$ which is a union of conjugacy classes is clearly invariant with respect to conjugation).  

To prove Theorem \ref{ergo} we will use the tool\footnote{It should also be possible to prove Theorem \ref{ergo} without idempotent ultrafilters, by replacing the notion of a $p$-limit with that of an IP-limit.  But then one would need to repeatedly invoke Hindman's theorem \cite{hind} as a substitute for the idempotent property, which would require one to continually pass to IP subsystems of the original IP system.  We will not detail this approach to Theorem \ref{ergo} here.} of \emph{idempotent ultrafilters}.  We stress that despite some superficial similarities, these ultrafilters are unrelated to the non-principal ultrafilter $\alpha$ used in the ultraproduct correspondence principle, and are used for completely different purposes.

We first recall the definition of an idempotent ultrafilter.  See \cite{berg1,berg2,berg3} for some surveys on idempotent ultrafilters and their uses in ergodic theory.

\begin{definition}[Idempotent ultrafilter] Let $G$ be an at most countable group.  Given an ultrafilter $p \in \beta G$, define the product ultrafilter $p \cdot p$ by requiring that $A \in p \cdot p$ if and only if $Ag^{-1}$ is $p$-large for a $p$-large set of $g$.  (Recall that a subset of $G$ is \emph{$p$-large} if it lies in $p$.)  We say that the ultrafilter $p$ is \emph{idempotent} if $p \cdot p = p$.  
\end{definition}

We have the following basic existence theorem:

\begin{theorem}[IP systems support idempotent ultrafilters]\label{exist} Let $H$ be an IP system in an at most countable group $G$.  Then there exists an idempotent ultrafilter $p$ on $G$ supported by every $H_n$ (i.e. $H_n$ is $p$-large for all $n$).
\end{theorem}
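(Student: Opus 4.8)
The plan is to realize the desired idempotent ultrafilter as an element of a suitable compact right-topological semigroup and apply the Ellis--Namakura idempotent theorem. First I would recall that $\beta G$, the space of ultrafilters on $G$, carries a natural semigroup operation $p \cdot q$ defined by declaring $A \in p \cdot q$ iff $\{ g \in G : Ag^{-1} \in q \} \in p$ (the definition in the excerpt is the special case $q = p$). With respect to the standard (Stone--\v Cech) topology on $\beta G$ — generated by the clopen sets $\overline{A} := \{ p : A \in p \}$ for $A \subset G$ — this space is compact Hausdorff, and for each fixed $q$ the map $p \mapsto p \cdot q$ is continuous; thus $(\beta G, \cdot)$ is a compact right-topological semigroup. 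These are standard facts about $\beta G$ (see \cite{berg1,berg2,berg3}), which I would state without reproving.

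Next I would identify the relevant closed subsemigroup. Let $S := \bigcap_{n \geq 1} \overline{H_n} = \{ p \in \beta G : H_n \in p \text{ for all } n \}$, the set of ultrafilters ``supported by every $H_n$.'' This is a nonempty (since the $H_n$ are decreasing nonempty sets, they generate a filter, which extends to an ultrafilter) closed subset of $\beta G$. The key structural point is that $S$ is a \emph{subsemigroup}: if $p, q \in S$, I must check $H_n \in p \cdot q$ for every $n$, i.e. that $\{ g : H_n g^{-1} \in q \} \in p$. Here is where the defining combinatorial feature of an IP system enters: if $g \in H_m$ for $m$ sufficiently large relative to $n$ — concretely, if $g = g_{i_1} \cdots g_{i_r}$ with $i_1 \geq m$ and we have arranged $m$ to exceed the indices appearing in a given element of $H_n$ — then $H_n g^{-1} \supseteq H_{?}$ contains a tail sub-IP-system, hence lies in $q$. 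More carefully: for any $h = g_{j_1}\cdots g_{j_s} \in H_n$ with $j_s < m$, and any $g = g_{i_1}\cdots g_{i_r} \in H_m$, the product $hg$ again has the form of an element of $H_n$ (the index blocks concatenate in increasing order), so $H_m \subseteq h^{-1} H_n$, i.e. $h \in H_n g^{-1}$ for all $g\in H_m$; rearranging, $H_m \subseteq H_n g^{-1}$ fails in general, so the argument must be run as: for $g \in H_m$ with $m$ large, $H_{m} \subseteq$ (something) --- I would instead argue that $\{ g \in G : H_n g^{-1} \in q\} \supseteq H_n$ because for $h \in H_n$ one has $H_n \supseteq h H_{k}$ for $k$ larger than all indices in $h$, giving $H_k \subseteq h^{-1}H_n \subseteq H_n h^{-1}{}^{-1}$... . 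The clean statement I will prove is: for each $n$ there is $m \geq n$ with $H_m \subseteq \{ g : H_n g^{-1} \in q \}$, using $q \in S$ so $H_m \in q$; and this containment is exactly the ``IP'' closure property that a tail block appended after an element of $H_n$ stays in $H_n$. Since $H_m \in p$, we get $\{g : H_n g^{-1} \in q\} \in p$, i.e. $H_n \in p\cdot q$, so $p \cdot q \in S$.

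Finally, $S$ is a nonempty compact right-topological semigroup (compactness and right-topologicality being inherited from $\beta G$, since $S$ is closed and $S \cdot q \subseteq S$), so by the Ellis--Namakura theorem $S$ contains an idempotent $p = p \cdot p$. By construction $p \in S$ means $H_n$ is $p$-large for every $n$, which is exactly the assertion. I expect the main obstacle to be stating and verifying the subsemigroup property cleanly — that is, pinning down the precise sense in which ``shifting $H_n$ by an element deep in the IP system lands back inside $H_n$,'' since the noncommutativity of $G$ forces care with left versus right cosets and with the order in which generator blocks are concatenated. Everything else (compactness of $\beta G$, right-continuity of multiplication, the Ellis--Namakura idempotent theorem) I would invoke as black boxes from the cited surveys.
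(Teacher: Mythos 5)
Your overall strategy --- realizing the idempotent via the Ellis--Namakura theorem applied to the closed, nonempty subsemigroup $S=\bigcap_{n}\overline{H_n}$ of the compact right-topological semigroup $\beta G$ --- is the standard route (the paper itself gives no argument, only a citation to the literature, and the cited proof is essentially this one). The compactness, nonemptiness, and continuity statements are all fine to take as black boxes. The genuine gap sits exactly where you say the main obstacle is: the subsemigroup property. The ``clean statement'' you propose to prove there, namely that for each $n$ there is $m$ with $H_m \subseteq \{ g : H_n g^{-1} \in q\}$, is false for the product as you (following the paper's displayed definition) set it up. Indeed $H_n g^{-1} = \{ h : hg \in H_n\}$, so $h$ is the \emph{left} factor and $g$ the \emph{right} factor of $hg$; since elements of $H$ are products of generators with \emph{increasing} indices, $hg$ lands in $H_n$ only when every index of $h$ lies below the smallest index of $g$. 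Thus for fixed $g$ the set $H_n g^{-1}$ meets a deep enough tail $H_k$ in at most finitely many words (when $G$ is free on $g_1,g_2,\ldots$ and $g \neq e$, the intersection $H_n g^{-1} \cap H_k$ is actually empty for large $k$), so $H_n g^{-1} \notin q$ for any $q \in S$ and any $g \neq e$. In the free example one gets $\{g : H_n g^{-1} \in q\} \subseteq \{e\}$, hence $H_n \notin p\cdot q$ for all $p,q \in S$: the set $S$ is simply not a subsemigroup for this operation, and your argument cannot be completed as written. The justification you offer (``a tail block appended after an element of $H_n$ stays in $H_n$'') only shows that each admissible prefix $h$ lies in $H_n g^{-1}$; it does not make $H_n g^{-1}$ large.

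The repair is to pair the increasing-index convention with the other standard semigroup operation on $\beta G$: declare $A \in p\cdot q$ iff $\{ g : g^{-1}A \in q\} \in p$, where $g^{-1}A := \{h : gh \in A\}$. This is still a compact right-topological semigroup with $p \mapsto p\cdot q$ continuous, and now the element drawn from $p$ is the left (low-index) block while the tail block from $q$ is appended on the right, which is exactly the IP closure property: if $g \in H_n$ has largest index $M$, then $g^{-1}H_n \supseteq H_{M+1} \in q$, so $\{g : g^{-1}H_n \in q\} \supseteq H_n \in p$ and $H_n \in p\cdot q$. With that one line the subsemigroup property holds and the rest of your proof goes through verbatim. (The left/right mismatch originates in the paper's own definition, which writes $Ag^{-1}$ where $g^{-1}A$ is needed for increasing products; one can instead keep $Ag^{-1}$ and define IP products with decreasing indices, but some consistent pairing must be chosen, and your write-up does not commit to one.)
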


\begin{proof} See \cite[Theorem 2.5]{berg3} (the proof there is stated for actions of $\N$, but the argument is general and applies to arbitrary groups or semigroups $G$).  In \cite{hindman}, this result is attributed to Galwin.
\end{proof}

We will need the notions of $p$-limit and $IP$-limit.
If $p$ is an ultrafilter on $G$, $(x_g)_{g \in H}$ is a tuple in a Hausdorff topological space $Z$ indexed by a $p$-large set $H$,  and $x$ is a point in $Z$, we say that $x_g$ \emph{converges along $p$} to $x$, and write $p\!-\!\lim_g x_g = x$, if for every neighborhood $V$ of $x$, the set $\{g\in H: x_g \in V \}$ lies in $p$.  

Note that if $H$ is an IP system, and $p$ is an ultrafilter that is supported by $H_n$ for every $n$, then convergence along $H$ implies convergence along $p$, thus if $(x_g)_{g \in H}$ in a Hausdorff topological space $Z$ indexed by $H$ and $x \in Z$, then if $H\!-\!\lim_g x_g = x$ then $p\!-\!\lim_g x_g = x$; conversely, if $p\!-\!\lim_g x_g = x$, then for every $n$ and every neighborhood $V$ of $x$ there exists $g \in H_n$ such that $x_g \in V$.  

Given a unitary action $(U_g)_{g \in G}$ of an at most countable group $G$ on a Hilbert space $W$, and an idempotent ultrafilter $p \in \beta G$, we say that an element $f$ of $W$ is \emph{rigid} with respect to the $(U_g)_{g \in G}$ action along $p$ if one has $p\!-\!\lim_g U_g f = f$ in the weak topology of $W$.  
We will need the following ergodic theorem for idempotent ultrafilters:

\begin{theorem}[Idempotent ergodic theorem]\label{iet}  Let $(U_g)_{g \in G}$ be a unitary action of an at most countable group $G$ on a Hilbert space $W$, and let $f \in W$, and let $p$ be an idempotent ultrafilter on $G$.  Then $p\!-\!\lim_g U_g f$ exists in the weak topology of $W$ and is equal to $Pf$, where $P$ is the orthogonal projection to the closed linear subspace $\{ f': p\!-\!\lim_g U_g f' = f' \}$ of $W$ consisting of functions that are rigid with respect to the $(U_g)_{g \in G}$ action along $p$.
\end{theorem}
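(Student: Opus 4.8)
The plan is to mimic the classical von Neumann mean ergodic theorem, using the idempotence of $p$ in place of averaging over a F{\o}lner sequence. First I would decompose $W$ into the closed subspace $V := \{ f' \in W : p\!-\!\lim_g U_g f' = f' \}$ of $p$-rigid vectors and its orthogonal complement $V^\perp$, and show separately that $p\!-\!\lim_g U_g f = f$ for $f \in V$ (immediate from the definition of $V$) and $p\!-\!\lim_g U_g f = 0$ for $f \in V^\perp$. Granting these two facts, an arbitrary $f \in W$ is written $f = Pf + (f - Pf)$ with $Pf \in V$ and $f - Pf \in V^\perp$, and linearity of the (weak) $p$-limit — together with the fact that $p$-limits of bounded nets in a Hilbert space always exist in the weak topology, since bounded balls are weakly compact and the weak topology restricted to such balls is (for separable $W$) metrizable, so $p$-limits are well-defined — gives $p\!-\!\lim_g U_g f = Pf$ as claimed. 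I should first check $V$ is indeed a closed subspace: it is manifestly a linear subspace (the $p$-limit is linear and $U_g$ is linear), and closedness follows because $U_g$ is unitary, hence an $\eps/3$ argument shows that a norm-limit of $p$-rigid vectors is $p$-rigid (using that $\langle U_g f', w\rangle$ converges along $p$ to $\langle f', w\rangle$ uniformly in the relevant sense once $\|f' - f''\|$ is small).

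The heart of the matter is the orthogonal complement statement: if $f \perp V$ then $p\!-\!\lim_g U_g f = 0$ weakly. Here is where idempotence enters. Let $h := p\!-\!\lim_g U_g f$ (which exists weakly as noted). The key computation is that $h$ is itself $p$-rigid: using $p \cdot p = p$, for any $w \in W$ we have
$$
\langle h, w \rangle = p\!-\!\lim_g \langle U_g f, w\rangle = (p\cdot p)\!-\!\lim_g \langle U_g f, w \rangle = p\!-\!\lim_{g_1} p\!-\!\lim_{g_2} \langle U_{g_2 g_1} f, w\rangle = p\!-\!\lim_{g_1} \langle U_{g_1} h, w\rangle,
$$
where the third equality is the defining property of the product ultrafilter $p \cdot p$ (that a $p\cdot p$-limit of a doubly-indexed quantity $\langle U_{g_2 g_1} f, w\rangle$ unfolds as the iterated $p$-limit; this requires a small lemma, standard in the idempotent ultrafilter literature, identifying $(p\cdot p)$-limits of $F(g_2 g_1)$ with $p\!-\!\lim_{g_1} p\!-\!\lim_{g_2} F(g_2 g_1)$), and the last step uses $U_{g_2 g_1} = U_{g_2} U_{g_1}$ and continuity of the inner product. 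This shows $h = p\!-\!\lim_{g} U_g h$ weakly, i.e. $h \in V$. On the other hand, $h$ lies in the weakly closed convex hull of $\{U_g f : g\}$, which is contained in $V^\perp$ since $f \perp V$ and each $U_g$ preserves $V$ (as $V$ is $U$-invariant: if $f'$ is $p$-rigid then so is $U_{g_0} f'$, again by the $p\cdot p = p$ manipulation). Hence $h \in V \cap V^\perp = \{0\}$, giving $h = 0$.

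The main obstacle I anticipate is the careful justification of the unfolding identity $(p\cdot p)\!-\!\lim_g F(g) = p\!-\!\lim_{g_1} p\!-\!\lim_{g_2} F(g_2 g_1)$ and, relatedly, making sure all limits are taken in the correct (weak) topology where they are guaranteed to exist — one must be slightly careful because weak limits of $U_g f$ need not be norm limits, so statements like "$h$ is in the weakly closed convex hull of the orbit" must be invoked rather than norm-closed hull, and the invariance $U_{g_0} V \subseteq V$ must be verified at the level of $p$-limits rather than naively. Once these topological bookkeeping points are handled, the argument is the standard idempotent analogue of von Neumann, and indeed this theorem is classical (going back to work of Bergelson and others on IP ergodic theory); I would cite \cite{berg1, berg2, berg3} for the underlying unfolding lemma and present the decomposition argument above in full.
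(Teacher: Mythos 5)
The paper offers no proof of this theorem at all --- it simply cites \cite[Theorem 2.4]{berg4} --- so your argument must stand on its own. Its overall shape (split $W = V \oplus V^\perp$, use $p \cdot p = p$ to show the weak limit is rigid) is the standard one, and the existence of weak $p$-limits of bounded families, the closedness of $V$, and the unfolding identity $(p\cdot p)\!-\!\lim_g F(g) = p\!-\!\lim_{g_1} p\!-\!\lim_{g_2} F(g_2 g_1)$ (which matches the paper's convention for $p \cdot p$) are all fine. But there is a repairable slip in the key computation: writing $Hv := \mathrm{w}\!-\!p\!-\!\lim_g U_g v$, the inner limit $p\!-\!\lim_{g_2} \langle U_{g_2} U_{g_1} f, w\rangle$ equals $\langle H(U_{g_1} f), w\rangle$, \emph{not} $\langle U_{g_1} h, w\rangle = \langle U_{g_1} H f, w\rangle$; your displayed identity tacitly assumes $H$ commutes with $U_{g_1}$. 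The correct continuation is to move $H$ across the inner product: $p\!-\!\lim_{g_1} \langle H U_{g_1} f, w\rangle = p\!-\!\lim_{g_1} \langle U_{g_1} f, H^* w\rangle = \langle Hf, H^*w\rangle = \langle H^2 f, w\rangle$, which yields $H^2 = H$ and hence the rigidity of $h = Hf$.

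The genuine gap is in the step $h \in V^\perp$. Your convex-hull argument needs each $U_{g_0}$ to preserve $V$, i.e.\ that $U_{g_0} f'$ is $p$-rigid whenever $f'$ is. Since $G$ is a group, $U$-invariance of $V$ is \emph{equivalent} to $P$ commuting with every $U_g$, and the paper's remark immediately after the theorem flags exactly this commutation as an \emph{additional} property available for minimal idempotents, not for general ones. Indeed the ``$p \cdot p = p$ manipulation'' does not prove it: $p\!-\!\lim_g U_{g g_0} f'$ is a limit along the translated ultrafilter $p \cdot g_0$, not along $p$, and idempotence gives no control over it (morally, rigidity along $p$ is invariance under the part of $G$ where $p$ lives, and $U_{g_0}$ conjugates that part away). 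The clean repair, which also makes the $V \oplus V^\perp$ splitting unnecessary, is operator-theoretic: $H$ is a linear idempotent with $\|H\| \leq 1$, and a contractive idempotent on a Hilbert space is automatically self-adjoint, hence equals the orthogonal projection onto its range $= \{f' : Hf' = f'\} = V$. Thus $p\!-\!\lim_g U_g f = Hf = Pf$ for every $f$ at once, with no invariance of $V$ required.
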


\begin{proof}  See \cite[Theorem 2.4]{berg4}.  We remark that a related theorem (under the additional hypothesis that the idempotent ultrafilter $p$ is minimal) was established in \cite[Corollary 4.6]{berg2}.  In the minimal idempotent case we also have the additional property that $P$ commutes with the $U_g$; see \cite[Theorem 2.4]{berg4}.  However, we will not need this additional fact in our arguments here.
\end{proof}

Finally, we will need the following version of the van der Corput lemma for idempotent ultrafilters.

\begin{theorem}[Idempotent van der Corput lemma]\label{vdc}  Let $(U_g)_{g \in G}$ be a unitary action of an at most countable group $G$ on a Hilbert space $W$, and let $p$ be an idempotent ultrafilter on $G$. If $(f_g)_{g \in G}$ is a bounded family of vectors in $W$ with the property that 
$$ p\!-\!\lim_h  p\!-\!\lim_g \langle f_{gh}, f_g \rangle_W = 0,$$
then
$$p\!-\!\lim_g f_g = 0$$
in the weak topology.  
\end{theorem}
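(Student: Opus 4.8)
The plan is to follow the classical Hilbert-space van der Corput argument, but with Cesàro averages replaced throughout by $p$-limits, exploiting the idempotent property $p \cdot p = p$ to handle the change of variables $g \mapsto gh$. First I would show that the hypothesis forces the bounded family $(f_g)_{g \in G}$ to satisfy $p\!-\!\lim_g f_g = 0$ weakly by testing against an arbitrary $w \in W$ and controlling $|p\!-\!\lim_g \langle f_g, w\rangle|$. Since $p$-limits of bounded scalar sequences always exist (by compactness of $\beta G$ and of closed balls in $\C$), set $L := p\!-\!\lim_g \langle f_g, w\rangle$; the goal is $L = 0$. The standard trick is to observe that, by the idempotent property, $p\!-\!\lim_h p\!-\!\lim_g \langle f_{gh}, w\rangle = p\!-\!\lim_g \langle f_g, w\rangle = L$ as well (the map $g \mapsto gh$ followed by a $p$-limit in $h$ reproduces the same $p$-limit; this is exactly the content of $p \cdot p = p$, and should be recorded as a short preliminary lemma, or cited from \cite{berg1,berg3}). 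Hence for each fixed $h$, writing $L_h := p\!-\!\lim_g \langle f_{gh}, w\rangle$, we have $p\!-\!\lim_h L_h = L$.

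Next I would run the Cauchy–Schwarz/van der Corput step at the level of $p$-limits. Let $M := \sup_g \|f_g\|_W < \infty$. Consider, for a fixed $h$, the quantity $p\!-\!\lim_g \langle f_{gh}, f_g \rangle_W$; the hypothesis says $p\!-\!\lim_h$ of this is $0$. One wants to deduce $L = 0$ from this. The cleanest route is: test the hypothesis not against $w$ directly but use it to bound $\|p\!-\!\lim_g f_g\|$ in an averaged sense. Concretely, by Theorem \ref{iet} (the idempotent ergodic theorem), after passing to the unitary framework one knows $p\!-\!\lim_g U_g f$ equals a projection; but here the family $(f_g)$ need not be an orbit, so instead I would argue directly. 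Fix $w$ with $\|w\| \le 1$. For each $h$, $|L_h| = |p\!-\!\lim_g \langle f_{gh}, w\rangle| \le M$. Now the key identity: since $p\!-\!\lim_h L_h = L$, we have $|L|^2 = |p\!-\!\lim_h \langle (\text{something}), \cdot\rangle|$ — more precisely, apply Cauchy–Schwarz in the $h$-variable via the idempotent averaging to get $|L|^2 = |p\!-\!\lim_h L_h \overline{L}| \le p\!-\!\lim_h |L_h|\cdot|L|$, which is circular, so one must instead bring in the $f_g$ themselves. The correct move: write $|L|^2 = L \cdot \overline{p\!-\!\lim_g \langle f_g, w\rangle}$ and push the conjugate limit inside, $|L|^2 = p\!-\!\lim_g \overline{\langle f_g, w\rangle} \cdot (p\!-\!\lim_h L_h)$, then interchange to $|L|^2 = p\!-\!\lim_g p\!-\!\lim_h \langle f_{gh}, w\rangle \overline{\langle f_g, w\rangle}$, and estimate this by pulling $w$ out: $|\langle f_{gh}, w\rangle \overline{\langle f_g, w\rangle}| \le |\langle f_{gh}, f_g\rangle|^{1/2} \cdot (\text{bounded})$ — no, this is not quite an inequality one has. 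The honest replacement is to invoke the polarization/Cauchy–Schwarz lemma: for vectors $u, v$ with $\|u\|,\|v\|\le M$, one has $|\langle u,w\rangle\langle w,v\rangle| \le \|w\|^2 \|u\|\|v\|$, which only gives the trivial bound. So one genuinely needs to test against the $f_g$, not a fixed $w$: apply the van der Corput estimate to conclude $p\!-\!\lim_g \|f_g\|^2$-type control.

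The main obstacle, and the step I would spend the most care on, is exactly this passage: deducing weak convergence to $0$ from the double-$p$-limit hypothesis. The standard finitary van der Corput inequality bounds $\|\frac1N\sum_g f_g\|^2$ by an average of $\langle f_{g+h}, f_g\rangle$ over small $h$; its ultrafilter analogue must replace $\frac1N\sum$ by $p\!-\!\lim$, and the crucial input making this work is that an idempotent $p$ allows the ``averaged Cauchy–Schwarz'' $\big|p\!-\!\lim_g \langle f_g, v\rangle\big|^2 \le \big(p\!-\!\lim_g \|f_g\|^2\big)\|v\|^2$ together with a self-referential choice $v = $ a weak-limit object whose existence is guaranteed by weak compactness of the ball of radius $M$. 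So the structure is: (1) record the idempotent re-indexing identity $p\!-\!\lim_h p\!-\!\lim_g x_{gh} = p\!-\!\lim_g x_g$; (2) let $f$ be any weak cluster point of $(f_g)$ along $p$ — by weak compactness of the $M$-ball there is at least one, and we must show it is unique and zero; (3) for a subnet, or better, using that $p$-limits in the weak topology of a ball exist by weak compactness (Theorem \ref{iet}-style reasoning applies since the weak-$*$ ball is compact Hausdorff), set $f := p\!-\!\lim_g f_g$ weakly, which exists; (4) compute $\|f\|^2 = \langle f, f\rangle = p\!-\!\lim_g \langle f_g, f\rangle = p\!-\!\lim_g p\!-\!\lim_h \langle f_{gh}, f\rangle$ by step (1), $= p\!-\!\lim_h p\!-\!\lim_g \langle f_{gh}, f\rangle = p\!-\!\lim_h \langle f, f\rangle$ — still circular unless we push $f$ back to a $p$-limit: $= p\!-\!\lim_h p\!-\!\lim_g \langle f_{gh}, p\!-\!\lim_{g'} f_{g'}\rangle = p\!-\!\lim_h p\!-\!\lim_g p\!-\!\lim_{g'} \langle f_{gh}, f_{g'}\rangle$; now by idempotency collapse the $g'$ and $g$ limits appropriately to reach $p\!-\!\lim_h p\!-\!\lim_g \langle f_{gh}, f_g\rangle = 0$ by hypothesis. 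Thus $\|f\|^2 = 0$, and since $f$ was the (unique, by the same computation applied to any cluster point) weak $p$-limit, we conclude $p\!-\!\lim_g f_g = 0$ weakly. The delicate point throughout is justifying each interchange of $p$-limits and each application of the collapse $p\cdot p = p$ at the level of the relevant bounded scalar sequences — these are routine given the idempotent property but must be spelled out in the order $h$ then $g$ then $g'$ to match the hypothesis; I would present this as a sequence of equalities of scalars with a one-line justification (continuity of the pairing, idempotency) attached to each.
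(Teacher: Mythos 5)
The paper does not actually prove Theorem \ref{vdc}: its ``proof'' is the citation \cite[Theorem 2.3]{berg4}. So a self-contained argument would be a genuine addition, but yours does not close. The decisive gap is the final ``collapse'' step. After writing $\|f\|^2 = p\!-\!\lim_h p\!-\!\lim_g p\!-\!\lim_{g'} \langle f_{gh}, f_{g'}\rangle$ you invoke idempotency to identify the inner double limit with $p\!-\!\lim_g \langle f_{gh}, f_g\rangle$. But the idempotent identity $p\!-\!\lim_k x_k = p\!-\!\lim_h p\!-\!\lim_g x_{gh}$ applies to a \emph{single} bounded sequence $(x_k)_{k \in G}$ re-indexed through the group multiplication; it does not permit diagonalising a genuinely two-parameter bounded array $z_{g,g'}$ to $p\!-\!\lim_g z_{g,g}$. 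That assertion is false in general: for $z_{g,g'} = 1_{g = g'}$ and non-principal $p$ one has $p\!-\!\lim_g p\!-\!\lim_{g'} z_{g,g'} = 0$ while $p\!-\!\lim_g z_{g,g} = 1$. Concretely, with $f_g := e_g$ an orthonormal family, $p\!-\!\lim_{g'} \langle f_{gh}, f_{g'}\rangle = 0$ for every fixed $g,h$, whereas $p\!-\!\lim_g \langle f_{gh}, f_g\rangle = 1$ when $h$ is the identity; the two expressions you equate are different functions of $h$, and you give no argument that the discrepancy survives or vanishes under the outer $h$-limit. A second, more minor, defect is the explicit interchange $p\!-\!\lim_g p\!-\!\lim_h = p\!-\!\lim_h p\!-\!\lim_g$: iterated $p$-limits of bounded double sequences do not commute (take $a_{g,h} = 1_{g<h}$ on $\N$). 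This one is repairable: applying the idempotent identity in the order dictated by the paper's definition of $p\cdot p$ (outer variable on the right of the product, i.e.\ $p\!-\!\lim_k x_k = p\!-\!\lim_h p\!-\!\lim_g x_{gh}$) gives $\|f\|^2 = p\!-\!\lim_h p\!-\!\lim_g \langle f_{gh}, f\rangle$ directly, with no interchange needed. The collapse step is not repairable in the same way.

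The structural reason your plan cannot work as written is that you try to reach the hypothesis by a chain of exact identities, but weak $p$-limits do not respect inner products: $p\!-\!\lim_g \langle u_g, v_g\rangle$ need not equal the pairing of the two weak limits. The difference between $p\!-\!\lim_g \langle f_{gh}, f\rangle$ (which your identities control) and $p\!-\!\lim_g \langle f_{gh}, f_g\rangle$ (which the hypothesis controls) is $p\!-\!\lim_g \langle f_{gh}, f_g - f\rangle$, and the weak convergence $f_g - f \to 0$ gives no control on this term because $f_{gh}$ moves with $g$. Bridging the gap requires an honest positivity/Cauchy--Schwarz step: one forms the finite average $\frac1N \sum_{i=1}^N f_{gh_i}$ for elements $h_1,\ldots,h_N$ selected by the Galvin--Glazer/Hindman mechanism for idempotents (using that $A^\star := \{g \in A : Ag^{-1} \in p\}$ is again $p$-large when $A \in p$) so that the relevant quotients of the $h_i$ land in the set where $p\!-\!\lim_g \langle f_{gh}, f_g\rangle$ is small, then estimates $\bigl|\bigl\langle \frac1N\sum_i f_{gh_i}, f\bigr\rangle\bigr|^2 \leq \|f\|^2 \cdot \bigl\|\frac1N\sum_i f_{gh_i}\bigr\|^2$ and expands the square, letting $N \to \infty$. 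That selection argument is the real content of \cite[Theorem 2.3]{berg4} and is the piece missing from your write-up; I would either carry it out in full or simply cite the reference as the paper does.
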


\begin{proof}  See \cite[Theorem 2.3]{berg4}.
\end{proof}

We have enough machinery to prove Theorem \ref{ergo}.

\begin{proof} (Proof of Theorem \ref{ergo})  As discussed previously, we may assume without loss of generality that $G$ is at most countable.  By Theorem \ref{exist}, we may find an idempotent ultrafilter $p \in \beta G$ which is supported by $H_\n$ for every $\n$.
From hypothesis (i) we see that for all $f' \in L^2(X,{\mathcal X},\mu)$, we have
$$ H\!-\!\lim_g L_g f' = \int_X f'\ d\mu$$
in the weak topology of $L^2(X,{\mathcal X},\mu)$, and hence
\begin{equation}\label{plg}
 p\!-\!\lim_g L_g f' = \int_X f'\ d\mu
\end{equation}
in the weak topology of $L^2(X,{\mathcal X},\mu)$ also. Similarly, from (ii) we have
\begin{equation}\label{prg}
 p\!-\!\lim_g R_g f' = \int_X f'\ d\mu
\end{equation}
in the weak topology of $L^2(X,{\mathcal X},\mu)$ for all $f' \in L^2(X,{\mathcal X},\mu)$.  Next, if $f \in L^2(X,{\mathcal X},\mu)$ is rigid with respect to the $(L_g R_g)_{g \in G}$ action along $p$, thus
$$ p\!-\!\lim_g L_g R_g f = f,$$
in the weak topology, and in particular
$$ p\!-\!\lim_g \langle f, L_g R_g f\rangle_{L^2(X,{\mathcal X},\mu)} = \|f\|_{L^2(X,{\mathcal X},\mu)}^2;$$
on the other hand, by the parallelogram law and the unitary nature of $L_g R_g$ we have
$$ \| f - L_g R_g f \|_{L^2(X,{\mathcal X},\mu)}^2 = 2\|f\|_{L^2(X,{\mathcal X},\mu)}^2 - 2\langle f, L_g R_g f\rangle_{L^2(X,{\mathcal X},\mu)}$$
and thus
$$ p\!-\!\lim_g \| f - L_g R_g f \|_{L^2(X,{\mathcal X},\mu)} = 0$$
and so we have
$$ p\!-\!\lim_g L_g R_g f = f,$$
in the \emph{strong} topology also.  In particular,
$$ H\!-\!\lim_g L_g R_g f = f,$$
By hypothesis (iii), this forces $f$ to be orthogonal to $f_3$, thus we have
\begin{equation}\label{Rig}
\int_X f f_3\ d\mu = 0
\end{equation}
whenever $f \in L^2(X,{\mathcal X},\mu)$ is rigid with respect to the $(L_g R_g)_{g \in G}$ action along $p$.

To establish the theorem, it will suffice to show that
$$
 p\!-\!\lim_g \int_X f_1 (L_g f_2) (L_g R_g f_3)\ d\mu = 0,
$$
or equivalently that
\begin{equation}\label{pume}
 p\!-\!\lim_g (L_g f_2) (L_g R_g f_3)= 0
 \end{equation}
in the weak topology.

Let us first consider the case when $f_2 = 1$, that is we will show
\begin{equation}\label{pume-1}
 p\!-\!\lim_g \int_X f_1 (L_g R_g f_3)\ d\mu = 0.
\end{equation}
By Theorem \ref{iet}, the left-hand side of \eqref{pume-1} is equal to $\int_X f_1 P f_3$, where $P$ is the orthogonal projection onto the functions that are rigid with respect to the $(L_g R_g)_{g \in G}$ action along $p$; but from \eqref{Rig} we have $Pf_3 = 0$.  This establishes \eqref{pume-1}.

By linearity, we may now reduce to the task of establishing \eqref{pume} when $f_2$ has mean zero: 
\begin{equation}\label{f2x}
\int_X f_2\ d\mu = 0.  
\end{equation}
To handle this case, we apply Theorem \ref{vdc} with $W := L^2(X,{\mathcal X},\mu)$ and $f_g := (L_g f_2) (L_g R_g f_3)$, we see that to show \eqref{pume}, it will suffice to show that
\begin{equation}\label{cl}
p\!-\!\lim_h  p\!-\!\lim_g \int_X (L_{gh} f_2) (L_{gh} R_{gh} f_3) (L_g f_2) (L_g R_g f_3)\ d\mu= 0.
\end{equation}
We may rearrange the left-hand side (using the commutativity of the $L$ and $R$ actions) as
$$ 
p\!-\!\lim_h  p\!-\!\lim_g \int_X (f_2 L_h f_2) R_g (f_3 L_h R_h f_3)\ d\mu.$$
Applying \eqref{prg}, we may simplify this as
$$ 
p\!-\!\lim_h  (\int_X f_2 L_h f_2\ d\mu) (\int_X f_3 L_h R_h f_3\ d\mu).$$
From \eqref{plg}, \eqref{f2x} we have
$$ p\!-\!\lim_h  \int_X f_2 L_h f_2\ d\mu = (\int_X f_2\ d\mu) (\int_X f_2\ d\mu)  = 0.$$
Since $\int_X f_3 L_h R_h f_3\ d\mu$ is bounded in $h$, the claim \eqref{cl} follows.
\end{proof}

\begin{remark}  An inspection of the above argument reveals that we have actually proven an idempotent ultrafilter version of Theorem \ref{ergo}, in which the IP system $H$ is replaced by an idempotent ultrafilter $p$, the notion of an $H$-limit is replaced by a $p$-limit, the rigidity property in Theorem \ref{ergo}(iii) is replaced by the hypothesis that $p\!-\!\lim_g L_g R_g f = f$ (in the strong $L^2$ topology), and the conclusion is that $p\!-\!\lim_g \int_X f_1 (L_g f_2) (R_g f_3)\ d\mu = 0$.  
\end{remark}

\section{Ultra quasirandom groups}\label{ultrasec}

Throughout this section, we fix a non-principal ultrafilter $\alpha \in \beta \N \backslash \N$.

In order to prove Theorem \ref{main}, we will follow the proof of Theorem \ref{loaded} and perform an ultraproduct of a series of proposed counterexamples to Theorem \ref{main}.  In the course of doing so, we will be studying ultraproducts of increasingly quasirandom groups.  It will be convenient to give a name to such an object:

\begin{definition}[Ultra quasirandom group]  An \emph{ultra quasirandom group} is an ultraproduct $G = \prod_{\n \to \alpha} G_\n$ of finite groups with the property that for every $D>0$, the groups $G_\n$ are $D$-quasirandom for an $\alpha$-large set of $\n$.  (Informally: the quasirandomness of the $G_\n$ goes to infinity as $\n$ approaches $\alpha$.)
\end{definition}

To give an example of an ultra quasirandom group, we recall the following classical result of Frobenius:

\begin{lemma}[Frobenius]\label{frob}  Let $F$ be a finite field of prime order $p$, then the group $SL_2(F)$ of $2 \times 2$ matrices of determinant $1$ with entries in $F$ is $\frac{p-1}{2}$-quasirandom.
\end{lemma}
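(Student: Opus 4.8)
The plan is to study how $\rho$ acts on a unipotent element, exploiting the fact that the diagonal torus normalizes the unipotent subgroup and acts on it by squaring. Write $p$ for the order of $F$. If $p=2$, the claimed bound $\tfrac{p-1}{2}=\tfrac12$ is weaker than the trivial bound $d\ge 1$, so I may assume $p$ is odd. Let $\rho\colon SL_2(F)\to U_d(\C)$ be a non-trivial unitary representation, put $\zeta:=e^{2\pi i/p}$, and let $a:=\left(\begin{smallmatrix}1&1\\0&1\end{smallmatrix}\right)$, an element of order $p$. Since $\rho(a)$ is unitary with $\rho(a)^p=I$, it is diagonalizable with all eigenvalues among the $p$-th roots of unity; write $m_j$ for the multiplicity of $\zeta^j$ as an eigenvalue of $\rho(a)$, so that $d=\sum_{j\in F}m_j$.

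First I would check that $\rho(a)\neq I$. The kernel $\ker\rho$ is a \emph{proper} normal subgroup of $SL_2(F)$, so it suffices to see that the normal closure of $a$ is all of $SL_2(F)$; this holds because conjugating $a^{-1}$ by $w:=\left(\begin{smallmatrix}0&1\\-1&0\end{smallmatrix}\right)$ yields the lower unipotent $\left(\begin{smallmatrix}1&0\\1&1\end{smallmatrix}\right)$, and the upper and lower unipotents generate $SL_2(F)$ (Gaussian elimination over a field). (Alternatively, for $p\ge 5$ one may invoke that $SL_2(F)$ is quasisimple, so its only proper normal subgroup is $\{\pm I\}$, which contains no element of order $p$.) Hence there is some $j_0\in F\setminus\{0\}$ with $m_{j_0}\ge 1$.

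Next I would bring in the torus. For $\lambda\in F^\times$, the element $h_\lambda:=\left(\begin{smallmatrix}\lambda&0\\0&\lambda^{-1}\end{smallmatrix}\right)$ lies in $SL_2(F)$ and satisfies $h_\lambda a h_\lambda^{-1}=a^{\lambda^2}$, so that $\rho(h_\lambda)\rho(a)\rho(h_\lambda)^{-1}=\rho(a)^{\lambda^2}$. Since conjugation preserves eigenvalue multiplicities, comparing the two sides gives $m_j=m_{\lambda^2 j}$ for all $j\in F$ and all $\lambda\in F^\times$. As $\lambda$ runs over $F^\times$, $\lambda^2$ runs over the subgroup $(F^\times)^2$ of squares, which has order $\tfrac{p-1}{2}$; therefore the $\tfrac{p-1}{2}$ distinct eigenvalues $\zeta^{s j_0}$ with $s\in (F^\times)^2$ each occur with multiplicity $m_{j_0}\ge 1$, and summing over them gives $d\ge\tfrac{p-1}{2}\,m_{j_0}\ge\tfrac{p-1}{2}$, as desired. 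Note this argument never uses irreducibility of $\rho$, which is convenient since the definition of quasirandomness quantifies over all non-trivial representations.

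I expect the only slightly delicate point to be the step $\rho(a)\neq I$ — equivalently, that no proper normal subgroup of $SL_2(F)$ contains a non-trivial unipotent element. This is a standard fact, and once it is in hand the remainder is a short eigenvalue-bookkeeping argument built on the identity $h_\lambda a h_\lambda^{-1}=a^{\lambda^2}$.
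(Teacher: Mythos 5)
Your argument is correct and is essentially the paper's proof: both hinge on the identity $h_\lambda a h_\lambda^{-1}=a^{\lambda^2}$ forcing the nontrivial eigenvalues of $\rho(a)$ to be permuted by the $\frac{p-1}{2}$ quadratic-residue power maps, together with the fact that the unipotents generate $SL_2(F)$ so that $\rho(a)\neq I$. The only difference is organizational — you run the argument directly via eigenvalue multiplicities, while the paper phrases it as a contradiction assuming $d<\frac{p-1}{2}$ — which is not a substantive distinction.
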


\begin{proof}  We may of course take $p$ to be odd.  Suppose for contradiction that we have a non-trivial representation $\rho: SL_2(F_p) \to U_d(\C)$ on a unitary group of some dimension $d$ with $d < \frac{p-1}{2}$.  Set $a$ to be the group element
$$ a := \begin{pmatrix} 1 & 1 \\ 0 & 1 \end{pmatrix},$$
and suppose first that $\rho(a)$ is non-trivial.  Since $a^p=1$, we have $\rho(a)^p=1$; thus all the eigenvalues of $\rho(a)$ are $p^{\operatorname{th}}$ roots of unity.  On the other hand, by conjugating $a$ by diagonal matrices in $SL_2(F_p)$, we see that $a$ is conjugate to $a^m$ (and hence $\rho(a)$ conjugate to $\rho(a)^m$) whenever $m$ is a quadratic residue mod $p$.  As such, the eigenvalues of $\rho(a)$ must be permuted by the operation $x \mapsto x^m$ for any quadratic residue mod $p$.  Since $\rho(a)$ has at least one non-trivial eigenvalue, and there are $\frac{p-1}{2}$ distinct quadratic residues, we conclude that $\rho(a)$ has at least $\frac{p-1}{2}$ distinct eigenvalues.  But $\rho(a)$ is a $d \times d$ matrix with $d < \frac{p-1}{2}$, a contradiction.  Thus $a$ lies in the kernel of $\rho$.  By conjugation, we then see that this kernel contains all unipotent matrices.  But these matrices are easily seen to generate $SL_2(F_p)$, and so $\rho$ is trivial, a contradiction.
\end{proof}

Thus, if $p_\n$ is any sequence of primes going to infinity, and $F$ is the pseudo-finite field $F := \prod_{\n \to \alpha} F_{p_\n}$, then $SL_2(F)$ will be an ultra quasirandom group.

Our plan for proving Theorem \ref{main} will be as follows.  First, we shall establish mixing properties for ultra quasirandom groups $G$, which provide control on expressions such as
$$ \int_G f L_g f'\ d\mu_G,$$
$$ \int_G f R_g f'\ d\mu_G,$$
or
$$ \| f - L_g R_g f \|_{L^2(G, {\mathcal B}_G, \mu_G)}^2$$
for $f,f' \in L^2(G, {\mathcal B}_G, \mu_G)$ and $\mu_G$-almost every $g \in G$.  Next, we will construct a \emph{random} IP system by selecting generators $g_1,g_2,\ldots$ uniformly at random from $G$ (this requires the Loeb product measure construction from \ref{prod}), and verify that the mixing properties described above are almost surely inherited by such an IP system.  Finally, we use the mixing properties of quasirandom groups one final time to show that we can construct a \emph{determinstic} IP system with the same properties, while also being contained inside a specified positive measure subset $E$ of $G$.  Using this IP system, Theorem \ref{ergo}, and an argument by contradiction, one can obtain an ultraproduct version of Theorem \ref{main}; and then by using {\L}os's theorem we obtain Theorem \ref{main} itself.

We turn to the details.  Let $G$ be an ultra quasirandom group, then we have the Loeb measure space $(G, {\mathcal B}_G, \mu)$.  The mixing property of finite quasirandom groups from Proposition \ref{wm} is then reflected in ultra quasirandom groups as follows:

\begin{lemma}[Weak mixing]\label{wmi}  Let $G$ be an ultra quasirandom group, and let $A, B \in {\mathcal B}_G$ be Loeb measurable subsets of $G$.  Then for $\mu_G$-almost every $g \in G$, we have
$$ \mu_G(A \cap L_g B) = \mu_G(A) \mu_G(B)$$
and
$$ \mu_G(A \cap R_g B) = \mu_G(A) \mu_G(B).$$
In a similar spirit, if $f, f' \in L^2(G, {\mathcal B}_G, \mu_G)$, we have for $\mu_G$-almost every $g \in G$ that
$$ \int_G f L_g f'\ d\mu_G = (\int_G f\ d\mu_G) (\int_G f'\ d\mu_G)$$
and
$$ \int_G f R_g f'\ d\mu_G = (\int_G f\ d\mu_G) (\int_G f'\ d\mu_G).$$
\end{lemma}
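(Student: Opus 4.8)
The plan is to transfer the finitary weak‑mixing estimate of Proposition~\ref{wm} to the Loeb setting. Taking $f=1_A$ and $f'=1_B$ shows that the statements about sets follow from those about $L^2$ functions, so I focus on the latter, and I treat only the assertion involving $L_g$, the one for $R_g$ being handled in exactly the same way. First I would reduce to the case where $f$ and $f'$ are \emph{bounded internal} functions. For this, note that since $L_g$ is unitary the map $(f,f') \mapsto \int_G f L_g f'\,d\mu_G$ is continuous in $(f,f')$ for the $L^2$ norm, uniformly in $g$, as is $(f,f')\mapsto (\int_G f\,d\mu_G)(\int_G f'\,d\mu_G)$; hence if the claimed identity holds $\mu_G$-a.e.\ for every member of a sequence $(f_k,f'_k)$ converging to $(f,f')$ in $L^2$, it holds $\mu_G$-a.e.\ for $(f,f')$ as well, the exceptional set being the (countable) union of the exceptional sets for the $(f_k,f'_k)$. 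Truncating $f^{(k)} := f\cdot 1_{\{|f|\le k\}}$, then approximating the bounded function $f^{(k)}$ uniformly by a simple function, and finally using Lemma~\ref{appx} to replace the level sets of that simple function by internal sets (truncating to preserve the bound), produces such a sequence consisting of bounded internal functions. So it suffices to prove the lemma when $f = \st\circ h$ and $f' = \st\circ h'$ for internal $h = \lim_{\n\to\alpha}h_\n$, $h' = \lim_{\n\to\alpha} h'_\n$ with $\|h_\n\|_{L^\infty(G_\n)}, \|h'_\n\|_{L^\infty(G_\n)} \le C$ for an $\alpha$-large set of $\n$.

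In this case I argue as follows. Using the invariance of $\mu_{G_\n}$ under $x\mapsto x^{-1}$ and the identity $g^{-1}x^{-1} = (xg)^{-1}$, one rewrites $\E_{x\in G_\n} h_\n(x) h'_\n(g^{-1}x) = \E_{x\in G_\n}\tilde h_\n(x)\tilde h'_\n(xg)$ with $\tilde h_\n(x) := h_\n(x^{-1})$; this last average is precisely the quantity bounded in Proposition~\ref{wm}. Writing $D_\n$ for the quasirandomness parameter of $G_\n$ and setting
$$ \Phi_\n(g) := \left| \E_{x\in G_\n} h_\n(x) h'_\n(g^{-1}x) - \left(\E_{G_\n}h_\n\right)\left(\E_{G_\n}h'_\n\right)\right|, $$
Proposition~\ref{wm} (noting $\|\tilde h_\n\|_{L^2(G_\n)} = \|h_\n\|_{L^2(G_\n)} \le C$) gives $\E_{g\in G_\n}\Phi_\n(g) \le C^2 D_\n^{-1/2}$. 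Since $G$ is ultra quasirandom, $D_\n \to \infty$ along $\alpha$, so $\lim_{\n\to\alpha} D_\n^{-1/2}$ is infinitesimal. By Markov's inequality, $\mu_{G_\n}\{g : \Phi_\n(g) \ge 1/k\} \le k C^2 D_\n^{-1/2}$ for every $k\in\N$, so each internal set $\{ g\in G : \lim_{\n\to\alpha}\Phi_\n(g) \ge 1/k\}$ has Loeb measure $\st\lim_{\n\to\alpha}(kC^2 D_\n^{-1/2}) = 0$; taking the union over $k$, the set of $g$ at which $\Phi(g) := \lim_{\n\to\alpha}\Phi_\n(g)$ fails to be infinitesimal is $\mu_G$-null.

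To finish, fix $g = \lim_{\n\to\alpha} g_\n$ outside this null set, so that $\st\lim_{\n\to\alpha}\E_{x\in G_\n} h_\n(x) h'_\n(g_\n^{-1}x) = \st\lim_{\n\to\alpha}\left(\E_{G_\n}h_\n\right)\left(\E_{G_\n}h'_\n\right)$. I then invoke the function‑valued form of \eqref{mux}: for any bounded internal function $H = \lim_{\n\to\alpha}H_\n$ on $G$ one has $\int_G \st(H)\,d\mu_G = \st\lim_{\n\to\alpha}\E_{G_\n}H_\n$, which is obtained from \eqref{mux} by approximating $H$ from above and below by internal simple functions. Applying this with $H_\n := h_\n$, with $H_\n := h'_\n$, and — for the fixed $g$ above — with $H_\n(x) := h_\n(x) h'_\n(g_\n^{-1}x)$ (whose standard part is the function $x\mapsto f(x)(L_g f')(x)$), the displayed identity becomes $\int_G f L_g f'\,d\mu_G = (\int_G f\,d\mu_G)(\int_G f'\,d\mu_G)$. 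Replacing $h'_\n(g_\n^{-1}x)$ by $h'_\n(xg)$ throughout gives the $R_g$ statement, using Proposition~\ref{wm} directly with no inversion needed.

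I do not expect a serious obstacle: the essential point is that Proposition~\ref{wm} controls an \emph{average over $g$}, and such an averaged bound passes through the ultraproduct and, via Markov's inequality, forces the limiting mixing identity to hold off a $\mu_G$-null set. The steps that need care are instead bookkeeping: the $L^2$-to-bounded-internal reduction (where the key is that the relevant exceptional set is a countable union of null sets) and the measurability and Loeb‑integration facts — in particular that $g\mapsto\int_G f L_g f'\,d\mu_G$ is genuinely ${\mathcal B}_G$-measurable, being the standard part of the internal function $g\mapsto\lim_{\n\to\alpha}\E_{x\in G_\n}h_\n(x)h'_\n(g^{-1}x)$, so that the phrase ``for $\mu_G$-almost every $g$'' makes sense.
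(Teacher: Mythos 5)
Your proposal is correct and follows essentially the same route as the paper: Proposition \ref{wm} plus Markov's inequality produces, for each quasirandomness level $D$ and tolerance $\eps$, an internal exceptional set of Loeb measure at most $\eps^{-1}D^{-1/2}$, and sending $D \to \infty$ and $\eps \to 0$ (your $1/k$) yields the null exceptional set after taking ultralimits. The only cosmetic difference is the direction of the set/function reduction --- you prove the $L^2$ statement for bounded internal functions directly and deduce the set version, while the paper reduces the $L^2$ statement to the case of internal sets via simple-function approximation --- but both reductions rest on the same countable-union-of-null-exceptional-sets observation.
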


\begin{proof}  By approximating $L^2$ functions by simple functions, we see that the latter two conclusions are consequences of the former two.  We will just prove the first claim, as the second claim is similar. From Lemma \ref{appx} and a routine limiting argument, we see that to establish the lemma, it suffices to do so in the case when $A, B$ are internal sets, thus $A = \prod_{\n \to \alpha} A_\n$ and $B = \prod_{\n \to\alpha} B_\n$.

Let $\eps>0$ and $D>0$.  By hypothesis, $G_\n$ is $D$-quasirandom for $\n$ sufficiently close to $\alpha$.  By Proposition \ref{wm} and Markov's inequality, we have that
$$ |\mu_{G_\n}(A_\n \cap L_{g_\n} B_\n) - \mu_{G_\n}(A_\n) \mu_{G_\n}(B_\n)| \leq \eps$$
for all $g_\n$ in $G_\n \backslash E_\n$, where $E_\n$ is an exceptional set with $\mu_{G_\n}(E_\n) \leq \eps^{-1} D^{-1/2}$.  If we set $E := \prod_{\n \to \alpha} E_\n$, then on taking ultralimits we see that $E$ is an internal subset of $G$ with $\mu(E) \leq \eps^{-1} D^{-1/2}$, and that
$$ |\mu_G(A \cap L_g B) - \mu_{G}(A) \mu_{G}(B)| \leq \eps$$
for all $g$ outside of $E$.  Letting $D$ go to infinity, and then letting $\eps$ go to zero, we obtain the first conclusion of the lemma as desired.
\end{proof}

Let ${\mathcal I}_G$ be the sub-$\sigma$-algebra of ${\mathcal B}_G$ generated by the conjugation invariant functions (or sets).  We will need the following variant of the above mixing property, but for $L_g R_g$ instead of $L_g$ or $R_g$ separately:

\begin{lemma}[Almost sure relative diagonal mixing]\label{Drm-basic}  
Let $G$ be an ultra quasirandom group. Let $n$ be a natural number, and let $f: G \to [-1,1]$ be a Loeb measurable function.  Then for $\mu_G$-almost every $g \in G$, one has the identity 
\begin{equation}\label{doe}
\| f - L_g R_g f \|_{L^2(G, {\mathcal B}_G, \mu_G)}^2 =  2 \| f - \E(f|{\mathcal I}_G) \|_{L^2(G, {\mathcal B}_G, \mu_G)}^2.
\end{equation}
\end{lemma}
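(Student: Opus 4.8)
The plan is to expand both sides of \eqref{doe}, use the ultraproduct to reduce to a concentration estimate on the finite groups $G_\n$, and prove that estimate by representation theory. Since $g \mapsto L_g R_g$ is the conjugation action $x \mapsto g^{-1}xg$ of $G$ on itself and is measure-preserving, it acts unitarily on $L^2(G,{\mathcal B}_G,\mu_G)$, so
$$ \| f - L_g R_g f \|_{L^2(G)}^2 = 2\|f\|_{L^2(G)}^2 - 2\langle f, L_g R_g f\rangle_{L^2(G)}.$$
Moreover $\E(f|{\mathcal I}_G)$ is conjugation invariant, hence fixed by every $L_g R_g$, and it is the orthogonal projection of $f$ onto the conjugation-invariant functions, so $2\|f - \E(f|{\mathcal I}_G)\|_{L^2(G)}^2 = 2\|f\|_{L^2(G)}^2 - 2\|\E(f|{\mathcal I}_G)\|_{L^2(G)}^2$. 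Thus \eqref{doe} is equivalent to the assertion that $\langle f, L_g R_g f\rangle_{L^2(G)} = \|\E(f|{\mathcal I}_G)\|_{L^2(G)}^2$ for $\mu_G$-almost every $g$; equivalently, writing $f = \E(f|{\mathcal I}_G) + f_1$ with $f_1$ orthogonal to the conjugation-invariant functions (a splitting preserved by each $L_g R_g$), that $\langle f_1, L_g R_g f_1\rangle_{L^2(G)} = 0$ for almost every $g$. Since both sides of \eqref{doe} change by $O(\|f - \tilde f\|_{L^2(G)})$ uniformly in $g$ when $f$ is replaced by $\tilde f$, Lemma \ref{appx} together with a routine limiting argument (approximating $f$ by internal functions and discarding a countable family of null exceptional sets) reduces us to the case where $f = \lim_{\n \to \alpha} f_\n$ is internal with each $f_\n : G_\n \to [-1,1]$. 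For such $f$ one checks, from the identity $\E(f|{\mathcal I}_G)(x) = \E_{h \in G} f(hxh^{-1})$ and the definition of Loeb measure, that $\E(f|{\mathcal I}_G)$ agrees $\mu_G$-almost everywhere with $\st \lim_{\n \to \alpha} \E(f_\n|{\mathcal I}_{G_\n})$, where $\E(\cdot|{\mathcal I}_{G_\n})$ is the finitary orthogonal projection onto the class functions of $G_\n$, and that the $L^2$ quantities in \eqref{doe} are the standard parts of their finitary counterparts on the $G_\n$.

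The heart of the matter is then the following finitary estimate: if $G$ is a $D$-quasirandom finite group and $\phi : G \to \C$ is orthogonal in $L^2(G)$ to every class function, then
$$ \E_{g \in G} \bigl| \langle \phi, L_g R_g \phi\rangle_{L^2(G)} \bigr| \le D^{-1/2}\|\phi\|_{L^2(G)}^2.$$
Indeed, $g \mapsto \langle \phi, L_g R_g \phi\rangle_{L^2(G)}$ is a diagonal matrix coefficient of the conjugation representation $\operatorname{Conj}_g := L_g R_g$ of $G$ on $L^2(G)$, associated to $\phi$. Under the Peter--Weyl decomposition $L^2(G) \cong \bigoplus_\rho \operatorname{End}(V_\rho)$ over the irreducible unitary representations $\rho$ of $G$, this representation acts by $A \mapsto \rho(g) A \rho(g)^{-1}$ on each $\operatorname{End}(V_\rho)$, whose invariant vectors are exactly the scalar matrices, i.e. (under the isomorphism) the class functions. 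Hence the orthogonal complement $W$ of the class functions is a subrepresentation of the conjugation representation that contains no copy of the trivial representation, so by $D$-quasirandomness every irreducible constituent of $W$ has dimension at least $D$. Now for any unitary representation $(U_g)_{g \in G}$ on a Hilbert space $W$ whose irreducible constituents all have dimension $\ge D$, and any $v \in W$, decomposing $v$ into isotypic components and applying Schur orthogonality (in the same spirit as the proof of Proposition \ref{wm}) gives $\E_{g \in G} |\langle v, U_g v\rangle_W|^2 \le D^{-1}\|v\|_W^4$; applying this with $v = \phi$ and using Cauchy--Schwarz yields the displayed bound.

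To globalize, fix $\eps > 0$ and $D > 0$. For $\alpha$-almost every $\n$ the group $G_\n$ is $D$-quasirandom, so applying the finitary estimate to $\phi_\n := f_\n - \E(f_\n|{\mathcal I}_{G_\n})$ (which is orthogonal to the class functions and has $\|\phi_\n\|_{L^2(G_\n)} \le 1$) together with Markov's inequality, the set $E_\n := \{ g_\n \in G_\n : |\langle \phi_\n, L_{g_\n} R_{g_\n}\phi_\n\rangle_{L^2(G_\n)}| > \eps \}$ has $\mu_{G_\n}(E_\n) \le \eps^{-1} D^{-1/2}$, so the internal set $E := \prod_{\n \to \alpha} E_\n$ satisfies $\mu_G(E) \le \eps^{-1} D^{-1/2}$. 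From the orthogonal splitting $\langle f_\n, L_{g_\n} R_{g_\n} f_\n\rangle = \langle \phi_\n, L_{g_\n} R_{g_\n} \phi_\n\rangle + \|\E(f_\n|{\mathcal I}_{G_\n})\|_{L^2(G_\n)}^2$ and the identifications from the first paragraph, we obtain, for every $g \in G \setminus E$, that $|\langle f, L_g R_g f\rangle_{L^2(G)} - \|\E(f|{\mathcal I}_G)\|_{L^2(G)}^2| \le \eps$. Letting $D \to \infty$ and then $\eps \to 0$ along sequences, and using that a countable union of $\mu_G$-null sets is $\mu_G$-null, we conclude that $\langle f, L_g R_g f\rangle_{L^2(G)} = \|\E(f|{\mathcal I}_G)\|_{L^2(G)}^2$ for $\mu_G$-almost every $g$, which is \eqref{doe}.

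The representation-theoretic estimate is a routine variant of the argument behind Proposition \ref{wm}; the one new point is that orthogonality to the class functions is exactly what removes the trivial subrepresentation from the conjugation action, so that the quasirandomness hypothesis applies. I expect the main obstacle to lie instead in the measure-theoretic bookkeeping: justifying the reduction to internal $f$, and correctly identifying the Loeb-space object $\E(f|{\mathcal I}_G)$ with the ultralimit of the finitary class-function projections, require some care with the various $\sigma$-algebras on $G$ (in particular the fact that ${\mathcal I}_G$ consists of the conjugation-invariant Loeb-measurable sets).
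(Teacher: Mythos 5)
Your proof is correct, but it takes a genuinely different route from the paper's. The paper never touches the representation theory of the conjugation action directly: it writes $\delta^2(g) = \int_G\int_G |f_x(h)-f_x(hg)|^2\,d\mu_G(h)\,d\mu_G(x)$ with $f_x(h):=f(h^{-1}xh)$, so that conjugating $x$ by $g$ becomes a \emph{right translation} in the conjugator variable $h$; it then applies the already-established right mixing (Lemma \ref{wmi}) to each $f_x$, invokes Fubini--Tonelli (Theorem \ref{ftl}) to swap ``for a.e.\ $g$, for each $x$'' into ``for a.e.\ $g$, for a.e.\ $x$'', and finally identifies $\int_G \|f_x - \E_G f_x\|_{L^2}^2\,d\mu_G(x)$ with $\|f-\E(f|{\mathcal I}_G)\|_{L^2}^2$ via the finitary identity $\E_{G_\n} f_{x,\n} = \E(f_\n|{\mathcal I}_{G_\n})(x)$ and Lemma \ref{conj}. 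You instead decompose the conjugation representation via Peter--Weyl as $\bigoplus_\rho \operatorname{End}(V_\rho)$ with action $A\mapsto \rho(g)A\rho(g)^{-1}$, observe that the class functions are exactly the trivial isotypic part, and run a Schur-orthogonality second-moment bound $\E_g|\langle v,U_gv\rangle|^2\le D^{-1}\|v\|^4$ on the complement. Both arguments are sound (I checked your isotypic-component bookkeeping, including the cross terms within an isotypic block, which are controlled by Cauchy--Schwarz). What your approach buys is conceptual directness --- it exhibits the class functions as precisely the trivial constituents of $\rho\otimes\bar\rho$, and gives a clean finitary rate $D^{-1/2}$ analogous to Proposition \ref{wm}; what the paper's approach buys is economy, reusing Proposition \ref{wm} as a black box and avoiding any new representation theory. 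Be aware that the step you dismiss as ``one checks'' --- identifying $\E(f|{\mathcal I}_G)$ with $\st\lim_{\n\to\alpha}\E(f_\n|{\mathcal I}_{G_\n})$ --- is exactly Lemma \ref{conj} of the paper and genuinely requires the Loeb Fubini--Tonelli theorem; it is not free, though your overall reduction to internal $f$ and the countable-exceptional-set argument are fine.
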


One can interpret \eqref{doe} geometrically as the assertion that $f - \E(f|{\mathcal I}_G)$ and $L_g R_g f - \E(f|{\mathcal I}_G)$ are orthogonal.

To prove Lemma \ref{Drm-basic}, we will need a technical relationship between the algebra ${\mathcal I}_G$ and the associated algebras ${\mathcal I}_{G_\n}$ of the finitary groups $G_\n$:

\begin{lemma}\label{conj}  For each $\n$, let $f_\n: G_\n \to [-1,1]$ be a function.  Then one has
$$
\E( (\st \lim_{\n \to \alpha} f_\n) | {\mathcal I}_G ) = \st \lim_{\n \to \alpha} \E(f_\n | {\mathcal I}_{G_\n} )$$
$\mu_G$-almost everywhere.
\end{lemma}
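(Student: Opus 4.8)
The plan is to prove that $g := \st\lim_{\n\to\alpha}\E(f_\n|{\mathcal I}_{G_\n})$ is a version of $\E(f|{\mathcal I}_G)$, where $f:=\st\lim_{\n\to\alpha}f_\n$ is the (bounded, Loeb measurable) function appearing in the statement. Since $\E(\cdot|{\mathcal I}_G)$ is by definition the orthogonal projection of $L^2(G,{\mathcal B}_G,\mu_G)$ onto its closed subspace $L^2(G,{\mathcal I}_G,\mu_G)$, it suffices to verify two things: (a) that $g$ is ${\mathcal I}_G$-measurable, and (b) that $\int_G (f-g)1_E\ d\mu_G=0$ for every $E\in{\mathcal I}_G$. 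Indeed, since bounded ${\mathcal I}_G$-simple functions are dense in $L^2(G,{\mathcal I}_G,\mu_G)$, (b) forces $f-g\perp L^2(G,{\mathcal I}_G,\mu_G)$, and combined with (a) this yields $\E(f|{\mathcal I}_G)=g$ $\mu_G$-almost everywhere, which is the assertion.

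Point (a) is soft. Each $\E(f_\n|{\mathcal I}_{G_\n})$ is the internal $[-1,1]$-valued function $x_\n\mapsto \E_{h_\n\in G_\n}f_\n(h_\n x_\n h_\n^{-1})$, so $g=\st\circ\big(\lim_{\n\to\alpha}\E(f_\n|{\mathcal I}_{G_\n})\big)$ is ${\mathcal B}_G$-measurable, being the composition of an internal (hence continuous) function with the continuous standard part map. Since the conjugation action on $G=\prod_{\n\to\alpha}G_\n$ is the ultralimit of the conjugation actions on the $G_\n$, and each $\E(f_\n|{\mathcal I}_{G_\n})$ is conjugation-invariant on $G_\n$, a termwise computation gives $g(kxk^{-1})=g(x)$ for all $k,x\in G$. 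Finally, one observes that the conjugation-invariant ${\mathcal B}_G$-measurable sets already form a $\sigma$-algebra (conjugation by a fixed element commutes with complements and countable unions), so this $\sigma$-algebra is precisely ${\mathcal I}_G$; hence the conjugation-invariant ${\mathcal B}_G$-measurable function $g$ is ${\mathcal I}_G$-measurable.

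For point (b), fix $E\in{\mathcal I}_G$. By Lemma \ref{appx} we may write $E=E^*$ modulo a $\mu_G$-null set for some internal $E^*=\prod_{\n\to\alpha}E_\n$; since $E$ is conjugation-invariant and conjugation is measure-preserving, $E^*$ is conjugation-invariant modulo null sets, i.e. $\mu_G(kE^*k^{-1}\Delta E^*)=0$ for every $k\in G$. Using the basic Loeb identity $\int_G(\st\lim_{\n\to\alpha}u_\n)(\st\lim_{\n\to\alpha}v_\n)\ d\mu_G=\st\lim_{\n\to\alpha}\langle u_\n,v_\n\rangle_{L^2(G_\n)}$ for bounded internal $u_\n,v_\n$ (a routine consequence of the construction of Loeb measure, as in the proof of Theorem \ref{ftl}) together with the self-adjointness of the finitary conditional expectation $\E(\cdot|{\mathcal I}_{G_\n})$, one computes
$$ \int_G (f-g)1_E\ d\mu_G = \st\lim_{\n\to\alpha}\big\langle 1_{E_\n}-\E(1_{E_\n}|{\mathcal I}_{G_\n}),\ f_\n\big\rangle_{L^2(G_\n)}. $$
By Cauchy--Schwarz in $L^2(G_\n)$ and $\|f_\n\|_{L^2(G_\n)}\le 1$, this has magnitude at most $\st\lim_{\n\to\alpha}\|1_{E_\n}-\E(1_{E_\n}|{\mathcal I}_{G_\n})\|_{L^2(G_\n)}$, and by the Pythagorean theorem and the identity $\|\E(1_{E_\n}|{\mathcal I}_{G_\n})\|_{L^2(G_\n)}^2=\E_{h_\n\in G_\n}\mu_{G_\n}(E_\n\cap h_\n E_\n h_\n^{-1})$ we get $\|1_{E_\n}-\E(1_{E_\n}|{\mathcal I}_{G_\n})\|_{L^2(G_\n)}^2\le \E_{h_\n\in G_\n}\mu_{G_\n}(E_\n\Delta h_\n E_\n h_\n^{-1})$. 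Hence everything reduces to showing
$$ \st\lim_{\n\to\alpha}\E_{h_\n\in G_\n}\mu_{G_\n}(E_\n\Delta h_\n E_\n h_\n^{-1})=0. $$

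This last step is the only non-formal one, and I expect it to be the main obstacle; it is where the (uncountable) ultraproduct structure is genuinely used. If it failed, the average would exceed some $c>0$ for an $\alpha$-large set of $\n$; for each such $\n$ I would choose $h_\n\in G_\n$ with $\mu_{G_\n}(E_\n\Delta h_\n E_\n h_\n^{-1})>c$, and for the remaining $\n$ pick $h_\n$ arbitrarily. Setting $h:=\lim_{\n\to\alpha}h_\n\in G$ and passing to the ultralimit yields $\mu_G(hE^*h^{-1}\Delta E^*)=\st\lim_{\n\to\alpha}\mu_{G_\n}(h_\n E_\n h_\n^{-1}\Delta E_\n)\ge c>0$, contradicting the conjugation-invariance of $E^*$ modulo null sets. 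Thus the $\alpha$-limit vanishes, $\int_G(f-g)1_E\ d\mu_G=0$ for all $E\in{\mathcal I}_G$, and the lemma follows. The subtle point is precisely that a ``representative'' sequence $E_\n$ of a conjugation-invariant Loeb set need \emph{not} consist of (approximately) conjugation-invariant finitary sets; it is only the simultaneous almost-invariance of $E^*$ under \emph{every} element of $G$, together with the non-principality of $\alpha$, that rules out a ``bad'' nonstandard conjugating element and forces the finitary conditional expectations to converge to $1_{E^*}$ in $L^2$.
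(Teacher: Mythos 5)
Your proof is correct, but its second half takes a genuinely different route from the paper's. Both arguments open the same way: one checks that $g := \st\lim_{\n\to\alpha}\E(f_\n|{\mathcal I}_{G_\n})$ is exactly conjugation-invariant (hence ${\mathcal I}_G$-measurable) and reduces matters to showing that $f-g$ is orthogonal to the conjugation-invariant functions. At that point the paper never approximates the test set or function by internal objects: for a conjugation-invariant bounded $F$ it writes $\int_G F\cdot(f-g)\,d\mu_G=\int_G F(x)\,(f-g)(hxh^{-1})\,d\mu_G(x)$ for every fixed $h$ (measure-preservation of conjugation plus invariance of $F$), averages in $h$, applies the Fubini--Tonelli theorem for Loeb measure (Theorem \ref{ftl}), and notes that the inner integral $\int_G (f-g)(hxh^{-1})\,d\mu_G(h)$ vanishes \emph{identically} by the termwise finitary identity $\E_{h_\n\in G_\n}\tilde f_\n(h_\n x_\n h_\n^{-1})=0$; no compactness or contradiction argument is needed. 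You instead pass to an internal representative $E^*=\prod_{\n\to\alpha}E_\n$ of $E$ via Lemma \ref{appx}, use self-adjointness of the finitary conditional expectation and Cauchy--Schwarz to reduce everything to the claim $\st\lim_{\n\to\alpha}\E_{h_\n\in G_\n}\mu_{G_\n}(E_\n\Delta h_\n E_\n h_\n^{-1})=0$, and prove that by contradiction, assembling an adversarial sequence of conjugators $h_\n$ into a single element $h\in G$ that would violate the almost-invariance of $E^*$ under conjugation by \emph{every} element of the ultraproduct group. That is a legitimately different use of the ultraproduct structure (saturation of the uncountable group of conjugators, rather than Loeb--Fubini), and every intermediate step checks out: the transfer of invariance from $E$ to $E^*$ modulo null sets, the identity $\|\E(1_{E_\n}|{\mathcal I}_{G_\n})\|_{L^2(G_\n)}^2=\E_{h_\n}\mu_{G_\n}(E_\n\cap h_\n E_\n h_\n^{-1})$, and the passage between Loeb integrals and standard parts of internal averages are all valid. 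What your route buys is the stronger intermediate fact, neither needed nor proved in the paper, that internal representatives of conjugation-invariant Loeb sets are asymptotically conjugation-invariant on average; what the paper's route buys is brevity, since the averaging trick dissolves the problem before any such quantitative statement has to be formulated.
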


\begin{proof}  The function $\st \lim_{\n \to \alpha} \E(f_\n | {\mathcal I}_{G_\n} )$ is clearly invariant under conjugation by elements of $G$. It thus suffices to show that the function
$$ \tilde f := \st \lim_{\n \to \alpha} f_\n - \st \lim_{\n \to \alpha} \E(f_\n | {\mathcal I}_{G_\n} )$$
is orthogonal to all ${\mathcal I}_G$-measurable bounded functions.

Let $F: G \to [-1,1]$ be ${\mathcal I}_G$-invariant.  Then by conjugating $x$ by an arbitrary group element $h \in G$, we have
$$ \int_G F(x) \tilde f(x)\ d\mu_G(x) = \int_G F(x) \tilde f(hxh^{-1})\ d\mu_G(x).$$
Integrating in the $h$ variable and using the Fubini-Tonelli theorem (Theorem \ref{ftl}), we see that
$$ \int_G F(x) \tilde f(x)\ d\mu_G(x) = \int_G F(x) \left(\int_G \tilde f(hxh^{-1})\ d\mu_G(h)\right)\ d\mu_G(x).$$
It will thus suffice to show that
$$ \int_G \tilde f(hxh^{-1})\ d\mu_G(h) = 0$$
for any $x \in G$.  But we can write $\tilde f = \st \lim_{\n \to \alpha} \tilde f_\n$, where
$$ \tilde f_\n := f_\n - \E(f_\n | {\mathcal I}_{G_\n} )$$
and direct calculation shows that
$$ \int_{G_\n} \tilde f_\n( h_\n x_\n h_\n^{-1} )\ d\mu_{G_\n}(h_\n) = 0$$
for any $x_\n \in G_\n$, and the claim then follows from taking ultralimits (and approximating $\tilde f_\n$ above and below by simple functions).
\end{proof}

Now we can prove Lemma \ref{Drm-basic}.

\begin{proof}[Proof of Lemam \ref{Drm-basic}]  For each $g \in G$, define the quantity
$$ \delta(g) := \| f - L_g R_g f \|_{L^2(G,{\mathcal B}_G, \mu_G)}.$$
We can expand 
$$ \delta^2(g) = \int_G |f(x) - f(g^{-1} x g)|^2\ d\mu_G(x).$$
If we then let $f_x: G \to \R$ be the function
$$ f_x(h) := f(h^{-1} x h)$$
then we conclude (by Fubini-Tonelli or from consideration of the finitary case) that
$$ \delta^2(g) = \int_G \left(\int_G |f_x(h) - f_x(h g)|^2\ d\mu_G(h)\right)\ d\mu_G(x).$$
Expanding out the square, we obtain
$$ \delta^2(g) = 2\int_G \left(\|f_x\|_{L^2(G)}^2 - \int_G f_x(h) f_x(h g)\ d\mu_G(h)\right)\ d\mu_G(x).$$
By Lemma \ref{wmi}, we see that for each $x \in G$, we have
\begin{equation}\label{fxx}
 \int_G f_x(h) f_x(h g)\ d\mu_G(h) = \left(\int_G f_x\ d\mu_G\right)^2
\end{equation}
for $\mu_G$-almost every $g \in G$.  By the Fubini-Tonelli theorem (Theorem \ref{ftl}), we conclude that for $\mu_G$-almost every $g \in G$, one has \eqref{fxx} for $\mu_G$-almost every $x$.  

In particular, for $\mu_G$-almost every $g \in G$, we have
$$
\delta^2(g) = X$$
where
$$ X :=  2 \int_G (\|f_x\|_{L^2(G)}^2 - |\int_G f_x\ d\mu_G|^2 )\ d\mu_G(x).$$
We can of course write
$$ \|f_x\|_{L^2(G)}^2 - |\int_G f_x\ d\mu_G|^2 = \|f_x - \int_G f_x\ d\mu_G \|_{L^2(G)}^2.$$
At this point it is convenient to pass back to the finitary setting.  We can rewrite the previous formula for $X$ as
$$ X = \st \lim_{\n \to \alpha} 2 \E_{x \in G_\n} \| f_{x,\n} - \E_{G_\n} f_{x,\n} \|_{L^2(G_\n)}^2$$
where $f_{x,\n}(h) := f_\n(h^{-1} x h)$.
As all the fibers of the map $h \mapsto h^{-1} x h$ have the same cardinality, we have
$$ \E_{G_\n} f_{x,\n} = \E(f_\n|{\mathcal I}_{G_\n})(x).$$
Also we observe the identity
$$ \E_{x \in G_\n} \|f_{x,\n} - \E(f_\n|{\mathcal I}_{G_\n})(x)\|_{L^2(G_\n)}^2 = \| f_\n - \E(f_\n|{\mathcal I}_{G_\n})\|_{L^2(G_\n)}^2$$
which follows from the invariance of $\E(f_\n|{\mathcal I}_{G_\n})$ with respect to conjugations.
In summary, we conclude that
$$
\delta^2(g) = 2 \st \lim_{\n \to \alpha} \| f_\n - \E(f_\n|{\mathcal I}_{G_\n}) \|_{L^2(G_\n)}^2$$
for $\mu_G$-almost every $g \in G$, and the claim now follows from Lemma \ref{conj}.
\end{proof}

In what follows, we would like to introduce a sequence $g_1,g_2,g_3,\ldots$ of elements drawn uniformly and independently at random from the ultra quasirandom group $G$.  One can of course model this system of random variables rigorously by using the standard product space
$$ (G,{\mathcal B}_G,\mu_G)^\N = (G^\N, {\mathcal B}_G^\N, \mu_G^\N).$$
However, the product $\sigma$-algebra ${\mathcal B}_G^\N$ will be far too small to measure events involving products of two or more of the $g_i$, and will therefore be useless for our applications.  We will thus need to invoke Theorem \ref{prod} to construct an extension of the standard product space which can handle finite products of the $g_i$.  

We turn to the details.  Let $G$ be an ultra quasirandom group.  We construct a sequence $g_1,g_2,g_3,\ldots \in G$ of random variables in $G$, whose joint distribution $(g_a)_{a \in \N}$ is defined by the coordinate functions on the Loeb product space on $G^\N = \prod_{a \in \N} G$ as discussed in Remark \ref{prada}.  In particular, we have
\begin{equation}\label{powa}
 \P( (g_a)_{a=1}^k \in E) = \mu_{G^k}( E )
\end{equation}
for any natural number $k$ and any ${\mathcal B}_{G^k}$-measurable set $E$.  We can then form the random IP system
$$ H := \{ g_{i_1} \ldots g_{i_r}: r \geq 1; 1 \leq i_1 < i_2 < \ldots < i_r \}$$
and its sub-IP systems
$$ H_n := \{ g_{i_1} \ldots g_{i_r}: r \geq 1; n \leq i_1 < i_2 < \ldots < i_r \}$$
for any natural number $n$.

We now investigate the mixing properties of this random IP system.  First, we demonstrate almost sure IP-mixing of $H$ when applied to sets that only depend on finitely many of the generators of $H$:

\begin{lemma}[Almost sure left and right mixing]\label{lrm}  Let $G, (g_a)_{a \in \N}, H$ be as above.  Let $n$ be a natural number, and let $E = E_{g_1,\ldots,g_n}, F = F_{g_1,\ldots,g_n}$ be Loeb measurable subsets of $G$ that depend in some jointly Loeb measurable fashion on $g_1,\ldots,g_n$ (but do not depend on $g_{n+1}, g_{n+2},\ldots$), in the sense that the sets
$$ \{ (x,g_1,\ldots,g_n) \in G \times G^n: x \in E_{g_1,\ldots,g_n} \}$$
and
$$ \{ (x,g_1,\ldots,g_n) \in G \times G^n: x \in F_{g_1,\ldots,g_n} \}$$
are ${\mathcal B}_{G \times G^n}$-measurable.  Then almost surely, one has
$$ \mu_G( E \cap L_g F ) = \mu_G(E) \mu_G(F)$$
and
$$ \mu_G( E \cap R_g F ) = \mu_G(E) \mu_G(F)$$
for every $g \in H_{n+1}$.
\end{lemma}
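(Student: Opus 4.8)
The plan is to fix the two families $(E_\gamma)_{\gamma\in G^n}$ and $(F_\gamma)_{\gamma\in G^n}$, and to prove, for each single finite index tuple $\vec{i}=(i_1<\dots<i_r)$ with $i_1\geq n+1$, that the event
$$ \Psi_{\vec{i}} := \{\, \mu_G(E\cap L_{g_{i_1}\cdots g_{i_r}}F)\neq \mu_G(E)\mu_G(F) \,\} $$
(where $E=E_{g_1,\dots,g_n}$, $F=F_{g_1,\dots,g_n}$), together with its $R$-analogue, has probability zero. Since $H_{n+1}$ consists of the group elements $g_{i_1}\cdots g_{i_r}$ ranging over the countably many such $\vec{i}$, a countable union then yields the lemma. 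The heuristic behind $\P(\Psi_{\vec{i}})=0$ is that, conditionally on $g_1,\dots,g_n$, the product $g_{i_1}\cdots g_{i_r}$ is uniformly distributed on $G$ and is ``independent'' of the sets $E,F$ just formed, so by Lemma \ref{wmi} it almost surely misses the Loeb-null set of group elements on which left mixing of $E$ against $F$ fails. The entire difficulty is in carrying this out inside the Loeb product framework of Theorem \ref{prod}, where $\mathcal B_{G^{n+r}}$ is strictly larger than the product of the $\mathcal B_G$'s.

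Two ingredients are needed. First, the product map $m\colon G^r\to G$, $m(h_1,\dots,h_r):=h_1\cdots h_r$, is an internal map, hence continuous by Lemma \ref{countsat}, so $m^{-1}$ carries $\mathcal B_G$ into $\mathcal B_{G^r}$; and since at each finite level $G_{\mathbf n}^r\to G_{\mathbf n}$ is measure-preserving (with $h_1,\dots,h_{r-1}$ fixed, $h_1\cdots h_r$ is uniform as $h_r$ ranges over $G_{\mathbf n}$), taking ultralimits and standard parts shows $m_*\mu_{G^r}=\mu_G$ on internal sets, hence on all of $\mathcal B_G$; in particular the $m$-preimage of a $\mu_G$-null set is $\mu_{G^r}$-null, and likewise for $h\mapsto x\cdot m(h)$. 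Second, $\Psi_{\vec{i}}$ depends only on the tuple $(g_a)_{a\in I}$ with $I=\{1,\dots,n\}\cup\{i_1,\dots,i_r\}$, a set of $n+r$ distinct indices, so by \eqref{powa} (in the general form of Remark \ref{prada}) its probability equals $\mu_{G^{n+r}}(\Theta_{\vec{i}})$, where
$$ \Theta_{\vec{i}} := \{\, (\gamma,h)\in G^n\times G^r : \mu_G(E_\gamma\cap L_{m(h)}F_\gamma)\neq \mu_G(E_\gamma)\mu_G(F_\gamma) \,\}. $$
Writing $\mu_G(E_\gamma\cap L_{m(h)}F_\gamma)=\int_G 1_{E_\gamma}(z)\,1_{F_\gamma}(m(h)^{-1}z)\,d\mu_G(z)$, using that $\{(z,\gamma):z\in E_\gamma\}$ and $\{(z,\gamma):z\in F_\gamma\}$ are $\mathcal B_{G\times G^n}$-measurable by hypothesis and that $(z,\gamma,h)\mapsto(m(h)^{-1}z,\gamma)$ is internal (hence continuous), one checks, via the Fubini--Tonelli theorem for Loeb measure (Theorem \ref{ftl}) applied to integrate out the $z$-variable, that $(\gamma,h)\mapsto\mu_G(E_\gamma\cap L_{m(h)}F_\gamma)$, as well as $\gamma\mapsto\mu_G(E_\gamma)$ and $\gamma\mapsto\mu_G(F_\gamma)$, are $\mathcal B_{G^{n+r}}$-measurable; hence $\Theta_{\vec{i}}\in\mathcal B_{G^{n+r}}$.

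It then remains to show $\mu_{G^{n+r}}(\Theta_{\vec{i}})=0$. Applying Theorem \ref{ftl} with $X=G^n$ and $Y=G^r$, it suffices that for every $\gamma\in G^n$ the slice $\Theta_{\vec{i},\gamma}:=\{h\in G^r:(\gamma,h)\in\Theta_{\vec{i}}\}$ be $\mu_{G^r}$-null. For fixed $\gamma$ the sets $E_\gamma,F_\gamma$ are fixed Loeb measurable subsets of $G$, so Lemma \ref{wmi} furnishes a $\mathcal B_G$-measurable $\mu_G$-null set $N_\gamma$ with $\mu_G(E_\gamma\cap L_g F_\gamma)=\mu_G(E_\gamma)\mu_G(F_\gamma)$ for all $g\notin N_\gamma$; thus $\Theta_{\vec{i},\gamma}\subset m^{-1}(N_\gamma)$, which is $\mu_{G^r}$-null by the first ingredient. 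As $\Theta_{\vec{i},\gamma}$ is itself $\mathcal B_{G^r}$-measurable (it is a slice of the measurable set $\Theta_{\vec{i}}$, by Theorem \ref{ftl}), it is null. The $R_g$ statement is identical, with $m(h)^{-1}z$ replaced by $z\,m(h)$ and the right-mixing half of Lemma \ref{wmi} invoked. I expect the main obstacle to be exactly the bookkeeping in the penultimate step: confirming that $\Theta_{\vec{i}}$ lies in $\mathcal B_{G^{n+r}}$ itself, not merely in its completion, so that the Loeb Fubini--Tonelli theorem applies cleanly — since the Loeb $\sigma$-algebra of a product far exceeds the product of the Loeb $\sigma$-algebras, one must carefully track that every set appearing is obtained from internal sets by the operations (continuous preimages, countable unions, complements) under which measurability is preserved.
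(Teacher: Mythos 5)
Your proof is correct and follows essentially the same route as the paper's: reduce by countability to a single product $g_{i_1}\cdots g_{i_r}$, condition on $g_1,\ldots,g_n$ via the Loeb Fubini--Tonelli theorem, observe that the product of the remaining generators is uniformly distributed on $G$ (your pushforward identity $m_*\mu_{G^r}=\mu_G$), and invoke Lemma \ref{wmi}. The only difference is that you make explicit the measurability bookkeeping (that $\Theta_{\vec i}\in{\mathcal B}_{G^{n+r}}$ and that slices of null sets are null) which the paper's two-line proof leaves implicit; this is done correctly.
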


\begin{proof}  As $H_{n+1}$ is at most countable, it suffices to verify the claim for $g = g_{i_1} \ldots g_{i_r}$ for a single choice of $r \geq 1$ and $n < i_1 \leq \ldots \leq i_r$.  By the Fubini-Tonelli theorem (Theorem \ref{ftl}), it suffices to prove the claim after replacing (or conditioning) the random variables $g_1,\ldots,g_n \in G$ with deterministic elements of $G$ (note that this does not affect the joint distribution of $g_{i_1},\ldots,g_{i_r}$, thanks to Fubini-Tonelli). But since $r \geq 1$, we see that $g = g_{i_1} \ldots g_{i_r}$ is uniformly distributed in $G$ (in the sense that it has the law of $\mu_G$ on ${\mathcal B}_G$, thus $\P( g \in A ) = \mu_G(A)$ for all $A \in {\mathcal B}_G$); this can be verified by reducing to the case of internal sets and then verifying the analogous finitary claim for products $g_\n = g_{i_1,\n} \ldots g_{i_r,\n}$ of uniformly distributed independent random variables $g_{i_1,\n},\ldots,g_{i_r,\n}$ on $G_\n$.  The claim now follows from Lemma \ref{wmi}.
\end{proof}

We have an analogue of the above lemma for the diagonal action $L_g R_g$:

\begin{lemma}[Almost sure relative diagonal mixing]\label{Drm}  
Let $G, (g_a)_{a \in \N}, H$ be as above.  Let $n$ be a natural number, and let $f = f_{g_1,\ldots,g_n}: G \to [-1,1]$ be a Loeb measurable function
that depends in a jointly Loeb measurable fashion on the random variables $g_1,\ldots,g_n$ (but do not depend on $g_{n+1}, g_{n+2},\ldots$), in the sense that the function $(x,g_1,\ldots,g_n) \mapsto f_{g_1,\ldots,g_n}(x)$ is a measurable function from $G \times G^n$ to $[-1,1]$.  Then almost surely, one has the identity 
\begin{equation}\label{doe-2}
\| f - L_g R_g f \|_{L^2(G, {\mathcal B}_G, \mu_G)}^2 =  2 \| f - \E(f|{\mathcal I}_G) \|_{L^2(G, {\mathcal B}_G, \mu_G)}^2
\end{equation}
for all $g \in H_{n+1}$.
\end{lemma}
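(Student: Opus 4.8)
The plan is to reduce Lemma \ref{Drm} to Lemma \ref{Drm-basic} in exactly the same way that Lemma \ref{lrm} was reduced to Lemma \ref{wmi}. The only new feature here is that the function $f$ may depend on the random generators $g_1,\dots,g_n$, so the argument must be run conditionally on those generators.

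First I would observe that $H_{n+1}$ is an at most countable set, so it suffices to establish \eqref{doe-2} almost surely for a single fixed word $g = g_{i_1}\cdots g_{i_r}$ with $r \geq 1$ and $n < i_1 < \dots < i_r$; a countable union of null sets is null. Next, by the Fubini--Tonelli theorem for Loeb measure (Theorem \ref{ftl}), applied to the Loeb product space $G^\N$, it suffices to prove the claim after conditioning the random variables $g_1,\dots,g_n$ to be deterministic elements of $G$: indeed, once we know that for each fixed choice of $(g_1,\dots,g_n)$ the identity \eqref{doe-2} holds for almost every choice of the remaining generators, Fubini--Tonelli upgrades this to an almost-sure statement in the full product space. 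Crucially, conditioning on $g_1,\dots,g_n$ does not alter the joint distribution of $g_{i_1},\dots,g_{i_r}$, since all the indices $i_1,\dots,i_r$ exceed $n$, and these remain independent and uniformly distributed on $G$.

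Once $g_1,\dots,g_n$ are fixed, the function $f = f_{g_1,\dots,g_n}: G \to [-1,1]$ becomes a fixed Loeb measurable function, and $g = g_{i_1}\cdots g_{i_r}$ is a product of at least one independent uniformly distributed random element of $G$, hence is itself uniformly distributed on $G$ (i.e. $\P(g \in A) = \mu_G(A)$ for all $A \in {\mathcal B}_G$). This last fact is verified exactly as in the proof of Lemma \ref{lrm}: reduce to internal sets $A$ and check the corresponding finitary statement that a product of one or more independent uniform random elements of a finite group $G_\n$ is again uniform. Then Lemma \ref{Drm-basic}, which asserts precisely that \eqref{doe} holds for $\mu_G$-almost every $g \in G$, gives that \eqref{doe-2} holds for almost every realization of $g$, which is what we needed.

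There is no serious obstacle here; the lemma is a routine combination of Lemma \ref{Drm-basic} with the measure-theoretic bookkeeping needed to handle the dependence of $f$ on $g_1,\dots,g_n$. The one point that deserves care is the application of Theorem \ref{ftl} on the Loeb product space $G^\N = \prod_{a \in \N} G$: one must ensure that the event ``$\|f - L_g R_g f\|_{L^2}^2 = 2\|f - \E(f|{\mathcal I}_G)\|_{L^2}^2$'' is jointly measurable with respect to the appropriate $\sigma$-algebra ${\mathcal B}_{G^n \times G}$ as $(g_1,\dots,g_n)$ and the word $g$ vary, so that the slicing argument applies; this follows from the joint measurability hypothesis on $(x,g_1,\dots,g_n) \mapsto f_{g_1,\dots,g_n}(x)$ together with the fact that $g$ is an internal function of $(g_{i_1},\dots,g_{i_r})$, and hence the relevant $L^2$ quantities are measurable functions of $(g_1,\dots,g_n,g_{i_1},\dots,g_{i_r})$.
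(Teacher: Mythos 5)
Your proposal is correct and follows essentially the same route as the paper: reduce to a single word $g = g_{i_1}\cdots g_{i_r}$ by countability of $H_{n+1}$, condition $g_1,\ldots,g_n$ to be deterministic via the Fubini--Tonelli theorem for Loeb measure, note that $g$ is then uniformly distributed on $G$ as in Lemma \ref{lrm}, and invoke Lemma \ref{Drm-basic}. The only cosmetic difference is that the paper first approximates $f$ by a simple (hence internal) function to streamline the measurability bookkeeping, whereas you argue the joint measurability of the relevant $L^2$ quantities directly; both dispositions are fine.
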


\begin{proof}  As $H_{n+1}$ is countable, it suffices to establish this claim for a single $g = g_{i_1} \ldots g_{i_r}$.  By approximation, we may also assume that $f$ is a simple function, that is to say a finite linear combination of indicator functions of internal subsets of $G$, and in particular is an internal function $f = \lim_{\n \to \alpha} f_\n$.  By the Fubini-Tonelli theorem (Theorem \ref{ftl}), we may replace the random variables $g_1,\ldots,g_n$ by deterministic elements of $G$, thus making $f$ deterministic, without affecting the joint distribution of $g_{n+1}, g_{n+2},\ldots$.

As in Lemma \ref{lrm}, $g$ is uniformly distributed on $G$.  The claim now follows from Lemma \ref{Drm-basic}.
\end{proof}

We will shortly use Lemma \ref{lrm} and Lemma \ref{Drm} to obtain a \emph{deterministic} IP system $H$ with good mixing properties.  To do this, we first need another technical lemma:

\begin{lemma}[Inclusion in a given set]\label{include}  Let $G, (g_a)_{a \in \N}, H$ be as above.  Let $E$ be a (deterministic) Loeb measurable subset of $G$.  Then for any $k \in \N$, the event 
\begin{equation}\label{event}
\{ g_{i_1} \ldots g_{i_r}: r \geq 1; 1 \leq i_1 < i_2 < \ldots < i_r \leq k \} \subset E 
\end{equation}
occurs with probability exactly $\mu(E)^{2^k-1}$.
\end{lemma}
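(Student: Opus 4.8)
The plan is to compute the probability of the event \eqref{event} by conditioning on the values of $g_1,\ldots,g_k$ one generator at a time, peeling off the last generator $g_k$ and using the fact that, after conditioning on $g_1,\ldots,g_{k-1}$, the element $g_k$ is uniformly distributed on $G$ and independent of everything that came before (in the strong Loeb-product sense of Theorem \ref{prod} and Remark \ref{prada}). The IP-set built on $g_1,\ldots,g_k$ decomposes naturally into three pieces: the IP-set $H^{(k-1)} := \{ g_{i_1}\cdots g_{i_r} : 1 \le i_1 < \cdots < i_r \le k-1\}$ not involving $g_k$ at all; the singleton $\{g_k\}$; and the products $\{ w g_k : w \in H^{(k-1)}\}$, i.e. right-translates of $H^{(k-1)}$ by $g_k$. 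Thus the event \eqref{event} for parameter $k$ is the intersection of the event ``$H^{(k-1)} \subset E$'', the event ``$g_k \in E$'', and the event ``$H^{(k-1)} g_k \subset E$'', i.e. ``$H^{(k-1)} \subset E g_k^{-1}$''.

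The key step is then to establish the recursion $p_k = p_{k-1}^2$, where $p_k$ denotes the probability of \eqref{event}. To see this, condition on $g_1,\ldots,g_{k-1}$; by Fubini--Tonelli for Loeb measure (Theorem \ref{ftl}), we may treat these as deterministic, which fixes the set $H^{(k-1)}$ and makes the event ``$H^{(k-1)} \subset E$'' a fixed (i.e.\ deterministic) $0/1$ quantity. The remaining randomness is in $g_k$, which is uniformly distributed on $G$. We need the conditional probability, over the choice of $g_k$, that both $g_k \in E$ and $H^{(k-1)} g_k \subset E$ hold. The slick move is to observe that the map $g_k \mapsto g_k^{-1}$ is measure-preserving on $(G,\mathcal{B}_G,\mu_G)$, and more to the point that the pair of conditions ``$H^{(k-1)} \subset E$'' and ``$H^{(k-1)} \subset E g_k^{-1}$'' are, jointly with ``$g_k \in E$'', ``almost independent'' copies of the lower-level event: more precisely, I would argue that conditioned on $g_1,\dots,g_{k-1}$, the event $\{H^{(k-1)} \subset E\} \cap \{g_k \in E\} \cap \{H^{(k-1)}g_k \subset E\}$ has conditional probability equal to $1_{H^{(k-1)} \subset E}$ times $\mu_G(\{ h \in G : h \in E \text{ and } H^{(k-1)} h \subset E\})$, and then take expectations. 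The cleanest way to make the recursion transparent is instead to run the conditioning in the other order: condition on $g_k$ first. With $g_k = h$ fixed and uniform in $G$, the event \eqref{event} at level $k$ becomes $\{ H^{(k-1)} \subset E \} \cap \{ h \in E\} \cap \{ H^{(k-1)} \subset Eh^{-1}\}$; now $g_1,\dots,g_{k-1}$ are still i.i.d.\ uniform and independent of $h$, and $H^{(k-1)}$ is their associated level-$(k-1)$ IP-set, so the probability of $\{H^{(k-1)} \subset E\} \cap \{H^{(k-1)} \subset Eh^{-1}\}$ is exactly the probability that the level-$(k-1)$ IP-set is contained in the (deterministic, since $h$ is fixed) set $E \cap Eh^{-1}$.

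This reduces everything to the following claim, which is really the heart of the matter: for any deterministic Loeb measurable $E' \subset G$, the level-$(k-1)$ event $\{H^{(k-1)} \subset E'\}$ has probability exactly $\mu_G(E')^{2^{k-1}-1}$ — but this is just the inductive hypothesis, with $E' := E \cap E g_k^{-1}$. To close the loop we compute $\mathbf{E}_{h}\left[ 1_{h \in E}\, \mu_G(E \cap Eh^{-1})^{2^{k-1}-1}\right]$. Here one must \emph{not} simply replace $\mu_G(E \cap Eh^{-1})$ by its ``typical'' value; rather the point is that we are not yet invoking quasirandomness here, so there is no loss — wait, this is exactly where the argument needs care. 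Let me restate: I expect the actual recursion the authors want is $p_k = p_{k-1}^2$, which forces $\mu_G(E \cap Eh^{-1})$ to equal $\mu_G(E)^2$ for almost every $h$, i.e.\ this is precisely where the \emph{weak mixing} of Lemma \ref{wmi} enters. Indeed, by Lemma \ref{wmi} (right-mixing form, with $A = B = E$), for $\mu_G$-almost every $h \in G$ we have $\mu_G(E \cap R_h E) = \mu_G(E)^2$; noting $R_h E = E h^{-1}$ in the notation \eqref{lager}, this gives $\mu_G(E \cap E h^{-1}) = \mu_G(E)^2$ for a.e.\ $h$. Therefore $\mathbf{E}_h\left[ 1_{h\in E} \, p_{k-1}(E \cap Eh^{-1})\right] = \mathbf{E}_h\left[1_{h \in E}\right] \cdot \left(\mu_G(E)^2\right)^{2^{k-1}-1} = \mu_G(E) \cdot \mu_G(E)^{2^k - 2} = \mu_G(E)^{2^k-1}$, completing the induction. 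The base case $k=1$ is $\mathbf{P}(g_1 \in E) = \mu_G(E) = \mu_G(E)^{2^1-1}$, which is immediate from \eqref{powa}.

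**The main obstacle** I anticipate is the bookkeeping needed to make the conditioning rigorous: one must ensure at each step that the relevant sets and events genuinely lie in the appropriate Loeb product $\sigma$-algebra $\mathcal{B}_{G^k}$ (so that Theorem \ref{ftl} applies), that $g_k$ really is uniform and ``Loeb-independent'' of $g_1,\dots,g_{k-1}$ in the strong sense of Remark \ref{prada} (not merely classically independent — the product $\sigma$-algebra would be too small, as the paper repeatedly warns), and that the a.e.\ statement from Lemma \ref{wmi} is being integrated against the correct measure. A secondary subtlety is that $E \cap Eh^{-1}$ depends on $h$, so ``$p_{k-1}$ applied to $E \cap Eh^{-1}$'' must be interpreted as: condition on $g_k = h$, and then the conditional law of $(g_1,\dots,g_{k-1})$ is unchanged and the event $\{H^{(k-1)} \subset E \cap Eh^{-1}\}$ has the stated conditional probability by the inductive hypothesis applied to the \emph{deterministic} set $E \cap Eh^{-1}$. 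Once these measurability and independence points are set up carefully (all of which follow routinely from Theorem \ref{prod}, Remark \ref{prada}, and Theorem \ref{ftl}), the computation itself is the short chain displayed above.
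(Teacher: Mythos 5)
Your argument is correct and is essentially the paper's own proof: the same induction on $k$, the same decomposition of the level-$k$ event into $\{g_k \in E\}$ and $\{H^{(k-1)} \subset E \cap E g_k^{-1}\}$ (the latter being $E \cap R_{g_k}E$ in the paper's notation), the same appeal to Lemma \ref{wmi} to get $\mu_G(E \cap Eg_k^{-1}) = \mu_G(E)^2$ almost surely, and the same Fubini--Tonelli conditioning on $g_k$ before applying the inductive hypothesis to the now-deterministic set $E \cap Eg_k^{-1}$. The one cosmetic slip is the announced recursion $p_k = p_{k-1}^2$; what your computation actually establishes, and what is correct, is $p_k = \mu_G(E)\, p_{k-1}^2$.
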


\begin{proof}  We induct on $k$.  The case $k=0$ is trivial, so suppose $k \geq 1$ and the claim has already been proven for $k-1$.  We observe that the event \eqref{event} is the intersection of the events
\begin{equation}\label{event-1}
 g_k \in E
\end{equation}
and
\begin{equation}\label{event-2}
 \{ g_{i_1} \ldots g_{i_r}: r \geq 1; 1 \leq i_1 < i_2 < \ldots < i_r \leq k-1 \} \subset E \cap R_{g_k} E.
\end{equation}
The event \eqref{event-1} occurs with probability $\mu(E)$.  By Lemma \ref{wmi}, we almost surely\footnote{Given that our sample space is not complete, it may be worth clarifying that we say that a statement holds \emph{almost surely} if there is a measurable event in the sample space of probability $1$ for which the statement holds, allowing for the possibility that the statement might also hold on some (possibly non-measurable) subset of the complementary null set.} have 
\begin{equation}\label{rgk}
\mu(E \cap R_{g_k} E) = \mu(E)^2.
\end{equation}
If we replace the random variable $g_k$ by a deterministic element of $G$ that obeys \eqref{rgk}, then by the induction hypothesis, the event \eqref{event-2} would then occur with probability $(\mu(E)^2)^{2^{k-1}-1}$.  Applying the Fubini-Tonelli theorem (Theorem \ref{ftl}), we conclude that \eqref{event} occurs with probability $\mu(E) \times (\mu(E)^2)^{2^{k-1}-1} = \mu(E)^{2^k-1}$, as desired.
\end{proof}

\begin{remark} The same argument shows in fact that the random variables $g_{i_1} \ldots g_{i_r}$ for $r \geq 1$ and $1 \leq i_1 < \ldots < i_r$ are jointly independent and uniformly distributed in $G$, provided that one works with the ordinary product $\sigma$-algebra of all the copies of ${\mathcal B}_G$, rather than with the Loeb product $\sigma$-algebra constructed in Theorem \ref{prod}; we omit the details.  (The finitary version of this assertion is already implicit in the work of Gowers \cite{gowers}.)
\end{remark}

Now we can construct the deterministic IP system.

\begin{lemma}[Deterministic construction of a mixing IP system]\label{propo}  Let $G$ be an ultra quasirandom group, and let ${\mathcal X}_0$ be a (deterministic) separable sub-$\sigma$-algebra of ${\mathcal B}_G$.  Let $E$ be a Loeb measurable subset of $G$ of positive measure.  Then there exist a (deterministic) sequence $g_1,g_2,\ldots$ of elements of $G$ whose associated IP system
$$ H = \{ g_{i_1} \ldots g_{i_r}: r \geq 1; 1 \leq i_1 < i_2 < \ldots < i_r \}$$
obeys the the following properties, where ${\mathcal X}$ is the $\sigma$-algebra generated by the sets $L_g R_h E$ with $E \in {\mathcal X}_0$ and and $g,h \in \langle H\rangle$, and $\langle H\rangle$ is the group generated by $H$:
\begin{itemize}
\item[(i)] (Containment in $E$)  One has $H \subset E$.
\item[(ii)]  (Left and right mixing)  One has
$$ H\!-\!\lim_g \int_G f L_g f'\ d\mu_G = (\int_G f\ d\mu) (\int_X f'\ d\mu)$$
and
$$ H\!-\!\lim_g \int_G f R_g f'\ d\mu_G = (\int_G f\ d\mu) (\int_X f'\ d\mu)$$
for all $f,f' \in L^2(G, {\mathcal X}, \mu_G)$.
\item[(iii)] (Diagonal relative mixing)  One has
$$
H\!-\!\lim_g \| f - L_g R_g f \|_{L^2(G, {\mathcal B}_G, \mu_G)}^2
= 2 \| f - \E(f|{\mathcal I}_G) \|_{L^2(G, {\mathcal B}_G, \mu_G)}^2 
$$
for all $f \in L^2(G,{\mathcal X},\mu_G)$.
\end{itemize}
\end{lemma}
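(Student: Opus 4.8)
The plan is to construct the sequence $g_1, g_2, \ldots$ greedily, one generator at a time. The central difficulty is that requirement (i) is a $\mu_G$-null condition on the sequence of generators — by (the reasoning behind) Lemma~\ref{include}, the probability that the IP set generated by the first $k$ generators lies in $E$ is only $\mu_G(E)^{2^k-1}$ — whereas the mixing requirements (ii),(iii) hold only \emph{almost surely} (Lemmas~\ref{wmi}, \ref{Drm-basic}), so one cannot simply intersect the two families of events. Instead I would arrange that at each stage the containment condition survives as a \emph{positive-measure} constraint on the next generator, while the (countably many) choices that would spoil mixing are discarded as null sets.

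Concretely: put $\tilde E_0 := E$, and suppose $g_1,\dots,g_{n-1}$ have been chosen together with sets $E = \tilde E_0 \supseteq \tilde E_1 \supseteq \cdots \supseteq \tilde E_{n-1}$ with $\mu_G(\tilde E_m) = \mu_G(E)^{2^m} > 0$. Fix a countable family $(E'_m)_{m \in \N}$ generating ${\mathcal X}_0$, let $\Gamma_{n-1} := \langle g_1,\dots,g_{n-1}\rangle$ (a countable subgroup), let ${\mathcal A}_{n-1}$ be the (countable) Boolean algebra generated by the sets $\{x : a^{-1}xb \in E'_m\}$ for $a,b \in \Gamma_{n-1}$, $m \in \N$ (these are Loeb measurable, being internal preimages of ${\mathcal B}_G$-sets), and let ${\mathcal D}_{n-1}$ be the countable set of rational linear combinations of indicators of sets in ${\mathcal A}_{n-1}$. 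For any fixed $p \in \Gamma_{n-1}$ and $f,f' \in {\mathcal D}_{n-1}$, as $g$ ranges uniformly over $G$ the element $pg$ ranges uniformly over $G$ (left translation by $p$ is Loeb measure preserving), so Lemmas~\ref{wmi} and \ref{Drm-basic} give that
\begin{gather*}
\int_G f\,(L_{pg}f')\,d\mu_G = \Big(\int_G f\,d\mu_G\Big)\Big(\int_G f'\,d\mu_G\Big), \qquad
\int_G f\,(R_{pg}f')\,d\mu_G = \Big(\int_G f\,d\mu_G\Big)\Big(\int_G f'\,d\mu_G\Big),\\
\| f - L_{pg}R_{pg} f \|_{L^2(G)}^2 = 2\| f - \E(f|{\mathcal I}_G) \|_{L^2(G)}^2
\end{gather*}
each hold for $\mu_G$-a.e.\ $g \in G$ (rescaling $f$ to take values in $[-1,1]$ for the last identity); and $\mu_G(\tilde E_{n-1} \cap g^{-1}\tilde E_{n-1}) = \mu_G(\tilde E_{n-1})^2$ for $\mu_G$-a.e.\ $g$, again by Lemma~\ref{wmi}. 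These are countably many co-null conditions, so a $\mu_G$-generic $g$ satisfies all of them; since $\tilde E_{n-1}$ has positive measure, I may choose $g_n \in \tilde E_{n-1}$ satisfying all of them and set $\tilde E_n := \tilde E_{n-1} \cap g_n^{-1}\tilde E_{n-1}$, so that $\mu_G(\tilde E_n) = \mu_G(\tilde E_{n-1})^2 = \mu_G(E)^{2^n}$.

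Let $H$ be the IP system generated by $(g_i)$. Property (i) follows by induction on the length $r$: one shows $g_{i_1}\cdots g_{i_r} \in \tilde E_{i_1 - 1} \subseteq E$, using $g_{i_1} \in \tilde E_{i_1-1}$, the inclusion $g_{i_2}\cdots g_{i_r} \in \tilde E_{i_2-1} \subseteq \tilde E_{i_1} \subseteq g_{i_1}^{-1}\tilde E_{i_1-1}$ (for $r \ge 2$), and the definition of $\tilde E_{i_1}$. For (ii) and (iii): every $g \in H_j$ factors as $g = pg_n$, where $n$ is the \emph{largest} index occurring in $g$ and $p \in \Gamma_{n-1}$ is the product of the remaining generators (with $p = e$ if $g$ is a single generator); since $n \ge j$ we have ${\mathcal D}_{j-1} \subseteq {\mathcal D}_{n-1}$ and $p \in \Gamma_{n-1}$, so the conditions imposed when $g_n$ was chosen give the left, right and diagonal identities above for the translate $g$, valid for \emph{all} $g \in H_j$ and all $f,f' \in {\mathcal D}_{j-1}$. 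Hence, for $f,f'$ in the countable dense subset $\bigcup_n {\mathcal D}_n$ of $L^2(G,{\mathcal X},\mu_G)$, the quantities appearing in (ii),(iii) are constant on $H_j$ and equal to the asserted right-hand sides, so the $H$-limits exist and have the stated values; a standard approximation — using that the bilinear form $(f,f') \mapsto \int_G f(L_g f')\,d\mu_G$ and the quadratic form $f \mapsto \|f - L_gR_gf\|_{L^2(G)}^2 - 2\|f-\E(f|{\mathcal I}_G)\|_{L^2(G)}^2$ are continuous uniformly in $g$ — then extends the conclusions to all of $L^2(G,{\mathcal X},\mu_G)$.

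The main obstacle, and the reason the argument is delicate, is exactly the clash noted at the start: containment in $E$ is a measure-zero event, so it must be kept alive constructively rather than obtained almost surely, and this must be done compatibly with the mixing requirements, which themselves refer to the $\sigma$-algebra ${\mathcal X}$ depending on the not-yet-built group $\langle H\rangle$. The structural point that makes the greedy construction go through is that a general element of $H_j$ has the form $p g_n$ with the newest generator $g_n$ on the right and a word $p$ in earlier generators on the left; consequently the countably many genericity conditions needed to handle \emph{all} relevant translates at level $n$ involve only data (the algebra ${\mathcal A}_{n-1}$ and the subgroup $\Gamma_{n-1}$) already determined once $g_1,\dots,g_{n-1}$ are fixed, so there is no circularity.
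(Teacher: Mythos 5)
Your proof is correct, and it takes a genuinely different route from the paper's. Both arguments identify the same obstacle --- containment of $H$ in $E$ is a null event while the mixing properties hold only almost surely --- but the paper resolves it by compactness rather than by a greedy induction: for each $k$ it combines Lemma~\ref{include} (whose probability $\mu_G(E)^{2^k-1}$ is positive for each \emph{fixed} $k$) with the random-IP-system lemmas (Lemmas~\ref{lrm}, \ref{Drm}) to produce \emph{some} finite tuple $(g_1,\ldots,g_k)$ satisfying truncated forms of (i)--(iii), notes that each truncated condition cuts out a countably closed subset of $G^k$, and then invokes the countable compactness of $G^\N$ (Lemmas~\ref{countsat}, \ref{sigma-tych}) to extract one infinite sequence all of whose truncations work. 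You instead keep the containment alive deterministically through the nested positive-measure sets $\tilde E_n = \tilde E_{n-1} \cap g_n^{-1}\tilde E_{n-1}$ (the classical Furstenberg--Poincar\'e IP-recurrence device) and choose each $g_n$ generic for the countably many co-null conditions indexed by words in the earlier generators; your key structural observation --- that every element of $H_j$ factors as $p g_n$ with $g_n$ the newest generator and $p$ a word in strictly earlier ones, so the stage-$n$ genericity requirements refer only to already-fixed data --- is exactly what removes the apparent circularity through ${\mathcal X}$. Your route is more elementary in that it needs only the single-variable almost-sure statements (Lemmas~\ref{wmi} and \ref{Drm-basic}) together with translation-invariance of Loeb measure, and dispenses with the Loeb product space and the $\sigma$-Tychonoff/countable-saturation step for this lemma; the paper's route reuses machinery it has already built for the random IP system and pushes all the bookkeeping into a single compactness extraction. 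The approximation step you defer to ``standard'' arguments is indeed routine: the identity in Lemma~\ref{Drm-basic} is homogeneous of degree two in $f$ (so rescaling elements of your ${\mathcal D}_{n-1}$ into $[-1,1]$ is harmless), and the bound $\|(f-\tilde f)-L_gR_g(f-\tilde f)\|_{L^2(G,{\mathcal B}_G,\mu_G)} \leq 2\|f-\tilde f\|_{L^2(G,{\mathcal B}_G,\mu_G)}$ supplies the uniform-in-$g$ continuity needed to pass from $\bigcup_n {\mathcal D}_n$ to all of $L^2(G,{\mathcal X},\mu_G)$ in (iii).
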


\begin{proof} If $E$ had full measure, the claim would be follow easily from Lemma \ref{include} (applied for $k=1,2,3,\ldots$), Lemma \ref{lrm}, and Lemma \ref{Drm}, since a random choice of $g_1,g_2,\ldots$ would then almost surely obey all the required properties.  Unfortunately, this argument does not work directly when $\mu_G(E)<1$, because the probability $\mu(E)^{2^k-1}$ appearing in Proposition \ref{include} then decays to zero as $k \to \infty$.  Nevertheless, one can still proceed in this case by using Proposition \ref{include}, Lemma \ref{lrm}, and Lemma \ref{Drm} to obtain a countable sequence of finitary truncations of Lemma \ref{propo}, and then appeal to countable compactness to then obtain the full strength of Lemma \ref{propo}.

We turn to the details.  Let ${\mathcal X}_0$ be generated by Loeb measurable sets $E_1,E_2,\ldots$.  By modifying each $E_i$ (and hence each set in ${\mathcal X}_0$) by a null set if necessary using Lemma \ref{appx}, we may assume without loss of generality that the $E_1,E_2,\ldots$ are \emph{internal} sets.  For each $k$, we may apply each of Proposition \ref{include}, Lemma \ref{lrm}, and Lemma \ref{Drm} a finite number of times to locate a finite deterministic sequence $(g_i)_{i=1}^k = (g_i^{(k)})_{i=1}^k$ of group elements obeying the following properties:

\begin{itemize}
\item[(i)] (Truncated containment in $E$)  One has
$$ \{ g_{i_1} \ldots g_{i_r}: r \geq 1; 1 \leq i_1 < i_2 < \ldots < i_r \leq k \} \subset E.$$
\item[(ii)]  (Truncated left and right mixing) For any $1 \leq k' < k$, any simple functions $f, f'$ that are linear combinations of at most $k$ indicator functions, each of which are boolean combinations of at most $k$ sets of the form $L_g R_h E_i$, where $i \leq k$, and $g,h$ are words of length at most $k$ in $g_1^{\pm},\ldots,g_{k'}^{\pm 1}$, with the coefficients of the linear combinations being rational with numerator and denominator bounded in magnitude by $k$, we have
$$ \int_G f L_g f'\ d\mu_G = (\int_G f\ d\mu) (\int_G f'\ d\mu)$$
and
$$ \int_G f R_g f'\ d\mu_G = (\int_G f\ d\mu) (\int_G f'\ d\mu)$$
whenever $g = g_{i_1} \ldots g_{i_r}$ for some $r \geq 1$ and $k' < i_1 < \ldots < i_r \leq k$.
\item[(iii)]  For any $1 \leq k' < k$, and $f$ and $g$ are as in (ii), we have
$$ \| f - L_g R_g f \|_{L^2(G, {\mathcal B}_G, \mu_G)}^2
= 2 \| f - \E(f|{\mathcal I}_G) \|_{L^2(G, {\mathcal B}_G, \mu_G)}^2.
$$
\end{itemize}

One can verify that for each $k$, the property of $(g_1,\ldots,g_k) \in G^k$ obeying the above properties describes a countably closed subset of $G^k$.  (Here we are implicitly using the continuous nature of the standard part function.)  On the other hand, from Lemma \ref{countsat} and Lemma \ref{sigma-tych} we know that the space $G^\N$ of sequences $(g_n)_{n \in \N}$ is countably compact.  We thus conclude the existence of an \emph{infinite} sequence $g_1,g_2,\ldots$ in $G$, such that the finite truncations $g_1,\ldots,g_k$ obey the above truncated properties for each $k$. 

By hypothesis, ${\mathcal X}_0$ can be generated by some countable sequence $E_1,E_2,\ldots$ of deterministic, Loeb measurable subsets of $G$.  Then ${\mathcal X}$ is generated by the countable family of random sets $L_g R_h E_n$, where $g,h \in \langle H \rangle$ lie in the group generated by $H$, and $n \in \N$.  Thus, any function in $f \in L^2(G,{\mathcal X},\mu_G)$ can be approximated to arbitrary accuracy (in $L^2$ norm) by a linear combination with rational coefficients by finitely many indicator functions, each of which is a boolean combination of sets of the form $L_g R_h E_i$, where $g, h$ are words in the $g_1,g_2,\ldots$.  The required claims (i)-(iii) of this lemma then follow from the truncated claims (i)-(iii) already established, and a limiting argument.
\end{proof}

As a consequence of this lemma and Theorem \ref{ergo}, we can obtain an ultraproduct version of Theorem \ref{main}:

\begin{theorem}[Relative weak mixing, ultraproduct version]\label{main-ultra}  Let $G$ be an ultra quasirandom group, and let $f_1,f_2,f_3 \in L^\infty(G, {\mathcal B}_G, \mu_G)$.  Then one has
$$ \int_G f_1 (L_g f_2) (L_g R_g f_3)\ d\mu_G = (\int_G f_2\ d\mu_G) (\int_G f_1 \E(f_3|{\mathcal I}_G) \ d\mu_G)$$
for $\mu_G$-almost all $g \in G$.
\end{theorem}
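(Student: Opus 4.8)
The plan is to prove Theorem \ref{main-ultra} by contradiction, running an argument parallel to the proof of Theorem \ref{loaded}: instead of invoking Proposition \ref{p1}, I would feed the deterministic mixing IP system produced by Lemma \ref{propo} into the ergodic theorem Theorem \ref{ergo}. First I would introduce the defect function
\[ \Phi(g) := \int_G f_1 (L_g f_2)(L_g R_g f_3)\ d\mu_G - \left(\int_G f_2\ d\mu_G\right)\left(\int_G f_1 \E(f_3|{\mathcal I}_G)\ d\mu_G\right) \]
and note that, since $(g,x) \mapsto (g^{-1}x,\, g^{-1}xg)$ is an internal (hence ${\mathcal B}_{G \times G}$-measurable) map, the integrand $(g,x) \mapsto f_1(x)f_2(g^{-1}x)f_3(g^{-1}xg)$ is ${\mathcal B}_{G\times G}$-measurable, so that by the Fubini--Tonelli theorem for Loeb measure (Theorem \ref{ftl}) the map $g \mapsto \Phi(g)$ is ${\mathcal B}_G$-measurable. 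If the conclusion of the theorem failed, then $\{\,|\Phi| \ge \eps\,\}$ would have positive Loeb measure for some $\eps>0$, and after possibly replacing $f_1$ by $-f_1$ I could fix a positive-measure Loeb set $E$ with $\Phi(g) \ge \eps$ for every $g \in E$; the task is then to derive a contradiction.

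Next I would peel off the conjugation-invariant part of $f_3$: write $f_3 = \E(f_3|{\mathcal I}_G) + f_3'$ with $f_3' := f_3 - \E(f_3|{\mathcal I}_G)$, so that $\E(f_3'|{\mathcal I}_G) = 0$; let ${\mathcal X}_0$ be the separable sub-$\sigma$-algebra of ${\mathcal B}_G$ generated by $f_1, f_2, f_3$ and $\E(f_3|{\mathcal I}_G)$; and apply Lemma \ref{propo} to ${\mathcal X}_0$ and the positive-measure set $E$ to obtain a \emph{deterministic} IP system $H \subset E$ together with the associated $\sigma$-algebra ${\mathcal X} \supseteq {\mathcal X}_0$ enjoying the left/right mixing property (ii) and the diagonal relative mixing property (iii) of that lemma. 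Here $f_1, f_2, f_3, f_3', \E(f_3|{\mathcal I}_G) \in L^\infty(G, {\mathcal X}, \mu_G)$, and $(L_g)_{g \in \langle H\rangle}, (R_g)_{g \in \langle H\rangle}$ are commuting measure-preserving actions on $(G,{\mathcal X},\mu_G)$ (the $\sigma$-algebra ${\mathcal X}$, being generated by the sets $L_g R_h F$ with $g,h \in \langle H \rangle$, is visibly $\langle H\rangle$-invariant on both sides). Since $L_g R_g$ fixes the conjugation-invariant function $\E(f_3|{\mathcal I}_G)$, Lemma \ref{propo}(ii) applied to the pair $f_1 \E(f_3|{\mathcal I}_G),\ f_2 \in L^2(G,{\mathcal X},\mu_G)$ gives
\[ H\!-\!\lim_g \int_G f_1 (L_g f_2)(L_g R_g \E(f_3|{\mathcal I}_G))\ d\mu_G = \left(\int_G f_1 \E(f_3|{\mathcal I}_G)\ d\mu_G\right)\left(\int_G f_2\ d\mu_G\right), \]
so that $H\!-\!\lim_g\left(\Phi(g) - \int_G f_1 (L_g f_2)(L_g R_g f_3')\ d\mu_G\right) = 0$; in particular there is an $n_0$ with $\left|\Phi(g) - \int_G f_1 (L_g f_2)(L_g R_g f_3')\ d\mu_G\right| \le \eps/3$ for all $g \in H_{n_0}$.

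It then remains to apply Theorem \ref{ergo} to the system $(G,{\mathcal X},\mu_G)$ with the actions above, the IP system $H$, and the functions $f_1, f_2, f_3'$. Hypotheses (i) and (ii) of Theorem \ref{ergo} are exactly Lemma \ref{propo}(ii). For hypothesis (iii) I would argue that $f_3'$ is orthogonal to every diagonally rigid $f \in L^2(G,{\mathcal X},\mu_G)$: since Lemma \ref{propo}(iii) guarantees that $H\!-\!\lim_g \| f - L_g R_g f\|_{L^2}^2$ \emph{exists} and equals $2\|f - \E(f|{\mathcal I}_G)\|_{L^2}^2$, the one-sided rigidity hypothesis (values at most $\eta$ in every $H_n$) forces this $H$-limit to be $0$, hence $f = \E(f|{\mathcal I}_G)$ is conjugation-invariant, and therefore $\int_G f f_3'\ d\mu_G = \int_G f\,\E(f_3'|{\mathcal I}_G)\ d\mu_G = 0$. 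Theorem \ref{ergo} then produces, for the $n_0$ above, some $g \in H_{n_0}$ with $\left|\int_G f_1 (L_g f_2)(L_g R_g f_3')\ d\mu_G\right| \le \eps/3$, and combining with the previous paragraph yields $|\Phi(g)| \le 2\eps/3 < \eps$ for this $g \in H_{n_0} \subset H \subset E$ --- contradicting $\Phi(g) \ge \eps$, and establishing the theorem. I expect the main obstacle to be the verification of hypothesis (iii): one must upgrade the weak, \emph{one-sided} rigidity condition of Theorem \ref{ergo}(iii) to genuine conjugation-invariance, which is legitimate only because Lemma \ref{propo}(iii) supplies the \emph{existence} of the relevant $H$-limit, and one must be careful to route the conjugation-invariant term through the $H$-limit mixing of Lemma \ref{propo}(ii) rather than through the almost-everywhere mixing of Lemma \ref{wmi}, which is worthless along the measure-zero IP set $H$.
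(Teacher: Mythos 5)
Your proof is correct and follows essentially the same route as the paper's: a contradiction argument that feeds the deterministic mixing IP system of Lemma \ref{propo} into Theorem \ref{ergo}, with hypothesis (iii) verified exactly as you describe (using that the existence of the $H$-limit in Lemma \ref{propo}(iii) upgrades the one-sided rigidity condition to $f=\E(f|{\mathcal I}_G)$). The only, harmless, difference is organizational: the paper first disposes of the conjugation-invariant component $\E(f_3|{\mathcal I}_G)$ via the almost-everywhere mixing of Lemma \ref{wmi} and linearity, reducing to the case $\E(f_3|{\mathcal I}_G)=0$ before setting up the contradiction (which is legitimate because the theorem is an a.e.\ statement and a union of two null sets is null), whereas you carry that component into the contradiction and control it along $H$ via Lemma \ref{propo}(ii).
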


\begin{proof}  First suppose that $f_3$ is ${\mathcal I}_G$-measurable.  Then $L_g R_g f_3 = f_3$ for every $g$, and the claim then follows from Lemma \ref{lrm} (replacing $f_1$ by $f_1 f_3$).  Thus we may assume without loss of generality that $\E(f_3|{\mathcal I}_G)=0$, and the task is now to show that
$$ \int_G f_1 (L_g f_2) (L_g R_g f_3)\ d\mu_G = 0$$
for $\mu_G$-almost all $g \in G$.

Suppose this is not the case.  Then one can find a Loeb measurable set $E \subset G$ of positive Loeb measure, and an $\eps>0$, such that
\begin{equation}\label{gal}
 |\int_G f_1 (L_g f_2) (L_g R_g f_3)\ d\mu_G| > \eps 
\end{equation}
for all $g \in E$.  We then apply Lemma \ref{propo}, with ${\mathcal X}_0$ equal to the (separable) $\sigma$-algebra generated by $f_1,f_2,f_3$, to find an IP system $H$ inside $E$ obeying all the conclusions of that proposition.  

Let $(X,{\mathcal X},\mu)$ be the restriction of $(G,{\mathcal B}_G, \mu_G)$ to the $\sigma$-algebra ${\mathcal X}$ generated by ${\mathcal X}_0$ and the shifts $L_g, R_g$ for $g \in H$.  We now verify the hypotheses of Theorem \ref{ergo}.  From Lemma \ref{propo}(ii) we have
$$ H\!-\!\lim_g \int_X f L_g f'\ d\mu = (\int_X f\ d\mu) (\int_X f'\ d\mu)$$
and
$$ H\!-\!\lim_g \int_X f R_g f'\ d\mu = (\int_X f\ d\mu) (\int_X f'\ d\mu).$$
for all $f,f' \in L^2(X,{\mathcal X},\mu)$, which are the hypotheses (i) and (ii) for Theorem \ref{ergo}.

Now we verify hypothesis (iii) for Theorem \ref{ergo}.  Suppose that $f \in L^2(X,{\mathcal X},\mu)$ be any function with the rigidity property that, for any $\eps>0$ and natural number $n$, there exists $g \in H_n$ such that $\| L_g R_g f - f \|_{L^2(X,{\mathcal X},\mu)} \leq \eps$.  
By Proposition \ref{propo}(iii), we conclude that
$$\| f - \E(f|{\mathcal I}_G) \|_{L^2(G, {\mathcal B}_G, \mu_G)} = 0,$$
thus $f$ is ${\mathcal I}_G$ measurable up to $\mu_G$-almost everywhere equivalence.  Since $\E(f_3|{\mathcal I}_G)=0$, we conclude that $\int_X f f_3\ d\mu = 0$, giving hypothesis (iii) for Theorem \ref{ergo}.  We may then apply Theorem \ref{ergo} to find $g \in H$ such that
$$ |\int_X f_1 (L_g f_2) (L_g R_g f_3)\ d\mu| \leq \eps.$$
But this contradicts \eqref{gal} and Lemma \ref{propo}(i), and the claim follows.
\end{proof}

\begin{remark}  One can reformulate the conclusion of Theorem \ref{main-ultra} as the assertion that the pushforward of the Loeb measure $\mu_{G^2}$ to $G^4$ under the map $(x,g) \mapsto (g,x,xg,gx)$, when restricted to the product $\sigma$-algebra ${\mathcal B}_G \times {\mathcal B}_G \times {\mathcal B}_G \times {\mathcal B}_G$, is equal to $\mu_G \times \mu_G \times (\mu_G \times_{{\mathcal I}_G} \mu_G)$, where $\mu_G \times_{{\mathcal I}_G} \mu_G$ is the relative product of the measures $\mu_G$ with respect to the factor ${\mathcal I}_G$.
\end{remark}

Finally, we can use the ultraproduct correspondence principle to recover Theorem \ref{main}.

\begin{proof}[Proof of Theorem \ref{main}]  A simple change of variables reveals the identity
$$ \E_{x \in G} f_1(x) f_2(xg) f_3(gx) = \int_G f_2 (L_g f_1) (L_g R_g f_3)\ d\mu_G$$
for any finite group $G$ and functions $f_1,f_2,f_3: G \to \R$.  Thus, it suffices to show that for any $D$-quasirandom group $G$, and any functions $f_1,f_2,f_3: G \to [-1,1]$, one has
$$ \int_G |\int_G f_2 (L_g f_1) (L_g R_g f_3)\ d\mu_G - (\int_G f_1\ d\mu_G) (\int_G f_2 \E(f_3|{\mathcal I}_G)\ d\mu_G)|\ d\mu_G(g) \leq c(D)$$
for some $c(D)$ going to zero as $D \to \infty$.

Suppose for sake of contradiction that this claim failed.  Carefully negating the quantifiers, we may then find an $\eps>0$ and a sequence $G_\n$ of finite groups and functions $f_{1,\n}, f_{2,\n}, f_{3,\n}: G_\n \to [-1,1]$, such that for each $\n$, $G_\n$ is $\n$-quasirandom and
\begin{equation}\label{lode}
 \int_{G_\n} |\int_{G_\n} f_{2,\n} (L_{\n,g_\n} f_{1,\n}) (L_{\n,g_\n} R_{\n,g_\n} f_{3_\n})\ d\mu_{G_\n} - (\int_{G_\n} f_{1_\n}\ d\mu_{G_\n}) (\int_{G_\n} f_{2_\n} \E(f_{3_\n}|{\mathcal I}_{G_\n})\ d\mu_{G_\n})|\ d\mu_{G_\n}(g_\n) \geq \eps.
\end{equation}
If we now form the ultraproduct $G := \prod_{\n \to \alpha} G_\n$ and the functions $f_i := \st \lim_{\n \to \alpha} f_{i,\n}$, then $G$ is an ultra quasirandom group and $f_1,f_2,f_3 \in L^\infty(G, {\mathcal B}_G, \mu_G)$.  Taking ultralimits of \eqref{lode} (and using Lemma \ref{conj}) we see that
$$
 \int_{G} |\int_G f_2 (L_g f_1) (L_g R_g f_3)\ d\mu_G - (\int_{G} f_{1}\ d\mu_{G}) (\int_{G} f_{2} \E(f_{3}|{\mathcal I}_{G})\ d\mu_{G})|\ d\mu_{G}(g) \geq \eps.$$
But this contradicts Theorem \ref{main-ultra}, and Theorem \ref{main} follows.
\end{proof}

\section{Proof of Theorem \ref{loaded-2}}\label{lo2-sec}

We now give two proofs of Theorem \ref{loaded-2}: a combinatorial proof, and a proof using the ultraproduct correspondence principle.  In both proofs, we use the fact that in a finite group $G$ with a subset $A \subset G$, the number of pairs $(x,g) \in G^2$ with $x, xg, gx \in A$ is also equal to
\begin{equation}\label{form}
 |G|^2 \int_{G^3} 1_A(ab) 1_A( aca^{-1} ) 1_A( bcb^{-1} ) d\mu_{G^3}(a,b,c),
 \end{equation}
as can be seen by applying the $|G|$-to-one change of variables $(x,g) := (ab, bc^{-1} a)$.  Note that each of the three factors $1_A(ab)$, $1_A( aca^{-1} )$, $1_A( bcb^{-1} )$ depends on only two of the three variables $a,b,c$.

We begin with the combinatorial proof.  The main tool is the triangle removal lemma of Ruzsa and Szemer\'edi:

\begin{lemma}[Triangle removal lemma]\label{tri}  For every $\delta>0$ there exists $\eps>0$ such that if $G = (V,E)$ is a graph on $n$ vertices with at most $\eps n^3$ triangles, then it is possible to remove fewer than $\delta n^2$ edges from the graph to obtain a graph with no triangles whatsoever.
\end{lemma}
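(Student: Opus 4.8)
The plan is to derive this from Szemer\'edi's regularity lemma by the standard ``clean and count'' argument (this is essentially the original approach of Ruzsa and Szemer\'edi \cite{rsz}). Given $\delta > 0$, I would first fix auxiliary parameters: a density threshold $\eta$ of size comparable to $\delta$, and a regularity parameter $\eps'$ chosen small relative to $\eta$ and $\delta$. Applying the regularity lemma (with the freedom to demand a large number of parts) yields an $\eps'$-regular equitable partition $V = V_1 \cup \cdots \cup V_k$, where $k$ is bounded in terms of $\eps'$, hence in terms of $\delta$. Finally I would set $\eps$ to be a small quantity depending only on $k$ and $\eta$, essentially of the form $\eps \sim (\eta - 2\eps')^3 / k^3$, shrunk slightly to absorb lower-order errors.

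Next comes the cleaning step. From $G$ I would delete every edge lying inside a single part $V_i$; every edge between a pair $(V_i,V_j)$ that fails to be $\eps'$-regular; and every edge between a pair $(V_i,V_j)$ with density $d(V_i,V_j) < \eta$. A routine count bounds the number of deleted edges: the intra-part edges number at most $k \binom{n/k}{2} \le n^2/(2k)$; the edges across irregular pairs number at most $\eps' k^2 (n/k)^2 = \eps' n^2$, since there are at most $\eps' k^2$ irregular pairs; and the edges across low-density pairs number at most $\binom{k}{2} \eta (n/k)^2 \le \tfrac{1}{2}\eta n^2$. Choosing $k$ sufficiently large and $\eps', \eta$ sufficiently small in terms of $\delta$, the total is strictly less than $\delta n^2$. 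Call the resulting graph $G'$.

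The heart of the matter is the counting step. Suppose for contradiction that $G'$ still contains a triangle; its three vertices necessarily lie in three \emph{distinct} parts $V_i, V_j, V_l$, since all intra-part edges were deleted. Each of the pairs $(V_i,V_j), (V_j,V_l), (V_i,V_l)$ therefore survived the cleaning, so each is $\eps'$-regular with density at least $\eta$. The triangle counting lemma for regular pairs then produces at least $(\eta - 2\eps')^3 |V_i|\,|V_j|\,|V_l| \ge \eps n^3$ triangles in $G'$, and hence in $G$, contradicting the hypothesis that $G$ has at most $\eps n^3$ triangles. Consequently $G'$ is triangle-free, and it was obtained from $G$ by removing fewer than $\delta n^2$ edges, as required.

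I expect the main obstacle to be the triangle counting lemma itself: showing that three pairwise $\eps'$-regular pairs, each of density at least $\eta$, together force roughly $(\eta - 2\eps')^3$ times the expected number of triangles. The argument iterates the defining property of $\eps'$-regularity --- one first discards the (at most $2\eps'$-fraction of) vertices of $V_i$ whose degree into $V_j$ or into $V_l$ deviates from typical, then for each surviving vertex applies regularity of $(V_j,V_l)$ to its two large neighbourhoods to count edges completing a triangle. Carefully tracking the accumulated error terms so that the final count genuinely exceeds $\eps n^3$ is the delicate point, and it is exactly this bookkeeping that forces the choice $\eps \ll (\eta - 2\eps')^3/k^3$ made at the outset.
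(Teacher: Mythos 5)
Your proposal is correct and follows exactly the route the paper indicates: the paper's ``proof'' is simply a citation to Ruzsa--Szemer\'edi, noting that the main ingredient is the Szemer\'edi regularity lemma, and your regularity--cleaning--counting argument is the standard implementation of that approach. The parameter bookkeeping you outline (choosing $\eta \sim \delta$, $\eps'$ small relative to $\eta$, a lower bound on the number of parts $k$ to control intra-part edges, and then $\eps \sim (\eta-2\eps')^3/k^3$) is the right shape and closes the argument.
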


\begin{proof} See \cite{rsz}.  The main ingredient of the proof is the Szemer\'edi regularity lemma \cite{szemeredi-reg}.
\end{proof}

Now we prove Theorem \ref{loaded-2}.  Let $\delta>0$, and let $\eps>0$ be sufficiently small depending on $\delta$.  Suppose for contradiction that we can find a finite group $G$ and a subset $A$ of $G$ with $|A| \geq \delta |G|$, and such that there are at most $\eps |G|^2$ pairs $(x,g) \in G \times G$ with $x,gx,xg\in G$; using the formula \eqref{form}, we conclude that
\begin{equation}\label{abc}
 \E_{a,b,c \in G} 1_A(ab) 1_A( aca^{-1} ) 1_A( bcb^{-1} ) \leq \eps.
\end{equation}
Now consider the tripartite graph $(V,E)$ with $V = G \times \{1,2,3\}$, and $E$ give by the following edges:
\begin{itemize}
\item If $(a,1), (b,2) \in V$ are such that $ab \in A$, then $\{(a,1),(b,2)\} \in E$.
\item If $(a,1), (c,3) \in V$ are such that $aca^{-1} \in A$, then $\{(a,1),(c,3)\} \in E$.
\item If $(b,2), (c,3) \in V$ are such that $bcb^{-1} \in A$, then $\{(b,2),(c,3)\} \in E$.
\item There are no further edges.
\end{itemize}
From \eqref{abc} we see that $(V,E)$ contains $3|G|$ vertices and at most $\eps |G|^3$ triangles.  Applying Lemma \ref{tri}, we see (for $\eps$ small enough) that one can remove all the triangles from $(V,E)$ by deleting fewer than $\delta |G|^2$ edges.  On the other hand, each pair $(a,b) \in G$ with $ab \in A$ leads to a triangle in $(V,E)$ with vertices $(a,1), (b,2), (ba,3)$.  There are at least $|A| |G| \geq \delta|G|^2$ such triangles, and the edges in these triangles are all disjoint, and so at least $\delta |G|^2$ edges need to be deleted in order to remove all triangles.  This gives the desired contradiction, and Theorem \ref{loaded-2} follows. $\Box$

\begin{remark}  The above argument in fact gives a quantitative value for $\eps$ which is of tower-exponential type with respect to $\delta$.  It would be of interest to obtain any improvement to this bound.
\end{remark}

Now we give the ultraproduct proof.  We can deduce Theorem \ref{loaded-2} from its ultraproduct version:

\begin{theorem}[Strong recurrence, ultraproduct version]\label{loaded-2-ultra}  Let $G$ be an ultraproduct of finite groups, and let $A$ be a Loeb measurable subset of $G$ with $\mu_G(A) > 0$.  Then 
$$ \int_{G^3} 1_A(ab) 1_A(aca^{-1}) 1_A(bcb^{-1}) d\mu_{G^3}(a,b,c) > 0.$$
\end{theorem}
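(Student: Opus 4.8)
The plan is to deduce Theorem~\ref{loaded-2-ultra} from the double recurrence theorem, Proposition~\ref{p1}, applied to the Loeb system of $G$. The one substantive point is that Proposition~\ref{p1} only produces a \emph{single} return time, whereas we need a set of return times of positive Loeb measure; this is handled via the strengthened (syndeticity) form of the recurrence theorem recorded in the proof of Proposition~\ref{p1}, together with the fact that syndetic sets in the ultraproduct group have positive Loeb measure.

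First I would reduce to the case that $A$ is internal. By Lemma~\ref{appx}, $A$ agrees off a $\mu_G$-null set with an internal set, and this replacement does not affect the integral: each of the maps $(a,b,c)\mapsto ab$, $(a,b,c)\mapsto aca^{-1}$, $(a,b,c)\mapsto bcb^{-1}$ pushes $\mu_{G^3}$ forward to $\mu_G$ (at each finite level these maps are $|G_\n|^2$-to-one), so modifying $A$ on a null set modifies the integrand only on a $\mu_{G^3}$-null set. Applying at each finite level the $|G_\n|$-to-one substitution $(x,g):=(ab,bc^{-1}a)$ underlying \eqref{form} and passing to ultralimits gives
$$ I := \int_{G^3} 1_A(ab)\,1_A(aca^{-1})\,1_A(bcb^{-1})\,d\mu_{G^3} = \mu_{G^2}\bigl(\{(x,g)\in G^2 : x,\,xg,\,gx\in A\}\bigr),$$
and the Fubini--Tonelli theorem for Loeb measure (Theorem~\ref{ftl}) rewrites this as
$$ I = \int_G \phi(g)\,d\mu_G(g), \qquad \phi(g):=\mu_G\bigl(\{x\in G:x,\,xg,\,gx\in A\}\bigr)=\mu_G\bigl(A\cap L_gA\cap L_gR_gA\bigr),$$
where the last equality uses the measure-preserving substitution $x=g^{-1}w$ and the conventions \eqref{lager}. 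Since $\phi(e)=\mu_G(A)>0$, the case of finite $G$ is immediate (there $\mu_G(\{e\})>0$), so assume $G$ is infinite; it now suffices to show $\phi>0$ on a set of positive Loeb measure.

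The group $G$ acts on $(G,\mathcal{B}_G,\mu_G)$ by the commuting measure-preserving actions $(L_g)_{g\in G}$ and $(R_g)_{g\in G}$. Applied (in the strengthened form from its proof) to any countably infinite subgroup $G_0\le G$, Proposition~\ref{p1} yields $\lambda_0>0$ with $\{g\in G_0:\phi(g)>\lambda_0\}$ syndetic in $G_0$. This alone is useless, as $G_0$ is $\mu_G$-null; but I would bootstrap it to: $S_\lambda:=\{g\in G:\phi(g)>\lambda\}$ is syndetic \emph{in $G$} for some $\lambda>0$. Indeed, if not, then for every finite $F\subset G$ and every $m\in\N$ there is $g\in G$ with $\phi(fg)\le 1/m$ for all $f\in F^{-1}$; a back-and-forth construction, started from a countably infinite subgroup and repeatedly adjoining such witnesses, produces a countably infinite $G_0\le G$ in which $\{g\in G_0:\phi(g)>\lambda\}$ is non-syndetic for every $\lambda>0$ --- contradicting the previous sentence. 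Fix such a $\lambda$; then $S_\lambda$ is Loeb measurable, since $\phi$ is the standard part of an internal function on $G$ and hence $\sigma$-continuous, so that $S_\lambda=\phi^{-1}\bigl((\lambda,\infty)\bigr)$ is countably open. Writing $G=\bigcup_{i=1}^n h_iS_\lambda$ and using that left translations are internal bijections of $G$ preserving $\mu_G$, we obtain $1=\mu_G(G)\le\sum_{i=1}^n\mu_G(h_iS_\lambda)=n\,\mu_G(S_\lambda)$, hence $\mu_G(S_\lambda)\ge 1/n>0$, and therefore $I=\int_G\phi\,d\mu_G\ge\lambda\,\mu_G(S_\lambda)>0$.

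The crux is the bootstrapping step: Proposition~\ref{p1} is intrinsically a statement about countable groups, and on a fixed countable subgroup its conclusion concerns a Loeb-null set of return times, so the real work is in propagating syndeticity up to the uncountable group $G$, where it does force positive Loeb measure. Everything else --- the reductions via Lemma~\ref{appx}, the change of variables, and the Fubini--Tonelli bookkeeping --- is routine.
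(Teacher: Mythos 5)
Your proof is correct, but it takes a genuinely different route from the paper's. The paper proves Theorem \ref{loaded-2-ultra} by viewing the three constraints $ab\in A$, $aca^{-1}\in A$, $bcb^{-1}\in A$ as a tripartite system of edge sets in $G\times G$ and invoking the ultraproduct triangle removal lemma (Lemma \ref{ulrt}): if the integral vanished, one could pass to triangle-free approximants $A'_{12},A'_{13},A'_{23}$ within $\eps$ of the original edge sets, and following the triangle $(a,b,ba)$ attached to each pair with $ab\in A$ forces $\mu_G(A)\le 3\eps$, a contradiction. You instead change variables back to $\int_G \mu_G(A\cap L_gA\cap L_gR_gA)\,d\mu_G(g)$ and import the syndeticity strengthening of Proposition \ref{p1}, i.e.\ \cite[Theorem 1.5]{berg4}. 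The real content of your argument is the L\"owenheim--Skolem-style closure step: assuming $S_\lambda=\{g:\phi(g)>\lambda\}$ is non-syndetic in $G$ for every $\lambda>0$, you close a countable subgroup under witnesses of non-syndeticity for each $\lambda=1/m$ and contradict the syndeticity of the return set in that countable subgroup. This does work (subsets of non-syndetic sets are non-syndetic, so handling the countable family $S_{1/m}$ suffices), and the remaining points --- measurability of $S_\lambda$ as a countably open set via continuity of internal functions and of $\st$, positivity of the Loeb measure of a measurable syndetic set by translation invariance, and the reduction to internal $A$ via null preimages under the measure-preserving coordinate maps --- all check out. As for what each approach buys: the paper's removal-lemma proof runs in parallel with its finitary combinatorial proof and is the one that scales up to the multidimensional Theorem \ref{loaded-4} via hypergraph removal, whereas yours avoids regularity and removal machinery entirely, reuses the ergodic input already needed for Theorem \ref{loaded}, and yields the extra information that the set of good $g$ is syndetic in $G$ with Loeb measure bounded below --- at the cost of taking the idempotent-ultrafilter recurrence theorem of Bergelson and McCutcheon as a black box.
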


The derivation of Theorem \ref{loaded-2} from Theorem \ref{loaded-2-ultra} (using \eqref{form}) is a routine (and simpler) variant of the derivation of Theorem \ref{loaded} from Proposition \ref{p1}, or Theorem \ref{main} from Theorem \ref{main-ultra}, and is omitted.  

We will need the following ultraproduct variant of the triangle removal lemma:

\begin{lemma}[Ultraproduct triangle removal lemma]\label{ulrt}  Let $V_1,V_2,V_3$ be the ultraproducts of finite non-empty sets, and let $A_{12}, A_{23}, A_{13}$ be Loeb measurable subsets of $V_1 \times V_2$, $V_2 \times V_3$, $V_3 \times V_1$ respectively.  Suppose that
$$ \int_{V_1 \times V_2 \times V_3} 1_{A_{12}}(a,b) 1_{A_{23}}(b,c) 1_{A_{13}}(a,c)\ d\mu_{G^3}(a,b,c) = 0.$$
Then for any $\eps>0$, there exist Loeb measurable subsets $A'_{12}, A'_{23}, A'_{13}$ of $V_1 \times V_2$, $V_2 \times V_3$, $V_3 \times V_1$ respectively respectively with
\begin{equation}\label{ooa}
 1_{A'_{12}}(a,b) 1_{A'_{23}}(b,c) 1_{A'_{13}}(a,c) = 0
\end{equation}
for all $a \in V_1, b \in V_2, c \in V_3$, and
\begin{equation}\label{mau}
 \mu_{V_i \times V_j}(A_{ij} \Delta A'_{ij}) \leq \eps
 \end{equation}
for $ij=12,23,13$.
\end{lemma}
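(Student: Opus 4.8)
The plan is to approximate the Loeb measurable sets $A_{ij}$ by internal sets, apply the finitary triangle removal lemma at each sufficiently large coordinate $\n$, and then form the ultraproduct of the resulting triangle-free finite configurations; a routine transfer argument shows this ultraproduct inherits the triangle-freeness, and a Loeb-measure computation controls how far it sits from the $A_{ij}$.

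First I would use Lemma~\ref{appx} to replace each $A_{ij}$ by an internal set $B_{ij} = \prod_{\n \to \alpha} B_{ij,\n}$ (with $B_{ij,\n} \subseteq V_{i,\n} \times V_{j,\n}$, extended by the empty set to any $\n$ where it is a priori undefined) satisfying $\mu_{V_i \times V_j}(A_{ij} \Delta B_{ij}) < \delta$, where $\delta = \delta(\eps) > 0$ is a small parameter fixed at the very end. Since the marginal of $\mu_{V_1 \times V_2 \times V_3}$ onto any two of the three coordinates agrees with the corresponding two-variable Loeb measure (this holds for internal sets by the finitary Fubini theorem and extends by the monotone class lemma, as in Theorem~\ref{ftl}), the hypothesis $\int 1_{A_{12}} 1_{A_{23}} 1_{A_{13}}\,d\mu_{V_1 \times V_2 \times V_3} = 0$ together with the pointwise estimate $|1_{A_{12}}1_{A_{23}}1_{A_{13}} - 1_{B_{12}}1_{B_{23}}1_{B_{13}}| \le 1_{A_{12} \Delta B_{12}} + 1_{A_{23} \Delta B_{23}} + 1_{A_{13} \Delta B_{13}}$ yields $\int_{V_1 \times V_2 \times V_3} 1_{B_{12}}(a,b)1_{B_{23}}(b,c)1_{B_{13}}(a,c)\,d\mu_{V_1\times V_2 \times V_3} \le 3\delta$. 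The left-hand side equals $\st \lim_{\n\to\alpha} \E_{a,b,c} 1_{B_{12,\n}}(a,b)1_{B_{23,\n}}(b,c)1_{B_{13,\n}}(a,c)$ (the average being over $a\in V_{1,\n}$, $b\in V_{2,\n}$, $c\in V_{3,\n}$), so this finitary triangle density is less than $4\delta$ for an $\alpha$-large set of $\n$.

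Next, at each such $\n$ I would apply the tripartite form of the triangle removal lemma: for every $\rho > 0$ there is $\sigma(\rho) > 0$, with $\sigma(\rho) \to 0$ as $\rho \to 0$, such that whenever finite sets $W_1, W_2, W_3$ and subsets $C_{ij} \subseteq W_i \times W_j$ have $\E_{a,b,c} 1_{C_{12}}(a,b)1_{C_{23}}(b,c)1_{C_{13}}(a,c) \le \sigma(\rho)$, one can find $C'_{ij} \subseteq C_{ij}$ with $1_{C'_{12}}(a,b)1_{C'_{23}}(b,c)1_{C'_{13}}(a,c) = 0$ for all $(a,b,c)$ and $|C_{ij} \setminus C'_{ij}| \le \rho\,|W_i||W_j|$ for each $ij$. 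This refinement of Lemma~\ref{tri} is needed because the cardinalities $|V_{i,\n}|$ need not be comparable; it follows from Lemma~\ref{tri} by a routine balanced blow-up (replace each vertex of $W_i$ by $M/|W_i|$ copies for a common multiple $M$, apply Lemma~\ref{tri} to the tripartite graph on $W_1 \sqcup W_2 \sqcup W_3$, and push the deleted edge set back down, discarding every original edge more than a quarter of whose copies were deleted), or may be quoted from the literature. Choosing $\rho$ so small that $\rho + \delta \le \eps$, and then $\delta$ so small that $4\delta \le \sigma(\rho)$, we obtain, for an $\alpha$-large set of $\n$, triangle-free sets $B'_{ij,\n} \subseteq B_{ij,\n}$ with $|B_{ij,\n} \setminus B'_{ij,\n}| \le \rho\,|V_{i,\n}||V_{j,\n}|$.

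Finally I would set $A'_{ij} := \prod_{\n \to \alpha} B'_{ij,\n}$, an internal and hence Loeb measurable subset of $V_i \times V_j$. The triangle-freeness \eqref{ooa} transfers from the $\alpha$-large set of levels: for $a = \lim_{\n\to\alpha} a_\n$, $b = \lim_{\n\to\alpha} b_\n$, $c = \lim_{\n\to\alpha} c_\n$ one has, directly from the definition of the ultraproduct, $1_{A'_{12}}(a,b)1_{A'_{23}}(b,c)1_{A'_{13}}(a,c) = \lim_{\n\to\alpha}\bigl(1_{B'_{12,\n}}(a_\n,b_\n)1_{B'_{23,\n}}(b_\n,c_\n)1_{B'_{13,\n}}(a_\n,c_\n)\bigr) = 0$. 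For \eqref{mau}, since $A'_{ij} \subseteq B_{ij}$ we have $\mu_{V_i \times V_j}(B_{ij} \Delta A'_{ij}) = \st \lim_{\n\to\alpha} |B_{ij,\n} \setminus B'_{ij,\n}| / (|V_{i,\n}|\,|V_{j,\n}|) \le \rho$, whence $\mu_{V_i \times V_j}(A_{ij} \Delta A'_{ij}) \le \delta + \rho \le \eps$, as required. The only point demanding care is the order in which the constants are fixed — $\eps$ determines $\rho$, the removal lemma then determines the admissible triangle density $\sigma(\rho)$, and only then is $\delta$ chosen — but since these dependencies are monotone and $\sigma(\rho)$ tends to zero only as $\rho \to 0$, a valid choice plainly exists. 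I do not expect any further obstacle; the single genuinely non-routine ingredient is the tripartite removal lemma, which as indicated reduces to Lemma~\ref{tri}.
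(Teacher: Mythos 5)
Your proof is correct, but it takes a genuinely different route from the paper's. The paper proves Lemma \ref{ulrt} entirely inside the ultraproduct: it conditions the indicators $1_{A_{ij}}$ on the product $\sigma$-algebra ${\mathcal B}_{V_i} \times {\mathcal B}_{V_j}$ (this conditioning plays the role that the Szemer\'edi regularity lemma plays finitarily), thresholds the conditional densities, passes to finite sub-factors, and extracts $A'_{ij}$ from the atoms of density greater than $2/3$; no finitary removal lemma is invoked anywhere. You instead go the other way around the correspondence principle: approximate by internal sets via Lemma \ref{appx}, transfer the hypothesis down to an $\alpha$-large set of levels $\n$, apply the finitary Lemma \ref{tri} (in its tripartite, unbalanced-parts form, which you correctly note requires a blow-up reduction or a citation) at each level, and take the ultraproduct of the outputs. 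All the steps check out: the marginal compatibility of $\mu_{V_1\times V_2\times V_3}$ with $\mu_{V_i\times V_j}$, the transfer of the density bound, the $\alpha$-largeness bookkeeping for \eqref{ooa}, and the order of quantifiers $\eps \to \rho \to \sigma(\rho) \to \delta$ are all handled properly. What each approach buys: yours is shorter if one is willing to take the finitary removal lemma as a black box (which is legitimate here, since the paper already quotes Lemma \ref{tri} for the combinatorial proof of Theorem \ref{loaded-2}); the paper's is self-contained modulo the Loeb--Fubini machinery and, more to the point, runs in the direction the cited literature (Elek--Szegedy, Tao) actually cares about, where the infinitary statement is proved directly and then \emph{used to derive} the finitary one --- so your argument, while valid, is circular in spirit relative to that program, and would be unusable if one wanted Lemma \ref{ulrt} as a stepping stone toward Lemma \ref{tri} itself.
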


This lemma was proven in \cite{EleSze} and \cite{Tao3}; for the sake of completeness, we give a proof later in this section.  Assuming this lemma for now, let us conclude the proof of Theorem \ref{loaded-2-ultra}.   Suppose for contradiction that
\begin{equation}\label{g3}
\int_{G^3} 1_A(ab) 1_A(aca^{-1}) 1_A(bcb^{-1}) d\mu_{G^3}(a,b,c) = 0.
\end{equation}
Let $\eps>0$ be chosen later.  Applying Lemma \ref{ulrt}, we can find Loeb measurable subsets $A'_{12}, A'_{23}, A'_{13}$ of $G^2$ such that
\begin{align}
 \mu_{G^2}( \{ (a,b) \in G^2: ab \in A; (a,b) \not \in A'_{12} \} ) &\leq \eps \label{lo}\\
 \mu_{G^2}( \{ (a,c) \in G^2: aca^{-1} \in A; (a,c) \not \in A'_{13} \} ) &\leq \eps\label{lo-1}\\
 \mu_{G^2}( \{ (b,c) \in G^2: bcb^{-1} \in A; (b,c) \not \in A'_{23} \} ) &\leq \eps\label{lo-2}.
 \end{align}
and
$$ 1_{A'_{12}}(a,b) 1_{A'_{13}}(a,c) 1_{A'_{23}}(b,c) = 0$$
for all $a,b,c \in G$.  In particular, one has
\begin{equation}\label{gaa}
 \int_{G^2}  1_{A'_{12}}(a,b) 1_{A'_{13}}(a,ba) 1_{A'_{23}}(b,ba)\ d\mu_{G^2}(a,b) = 0.
\end{equation}
Using \eqref{lo-1}, \eqref{lo-2} and the change of variables $c=ba$, we see that
\begin{align*}
 \mu_{G^2}( \{ (a,b) \in G^2: ab; (a,ba) \not \in A'_{13} ) &\leq \eps \\
  \mu_{G^2}( \{ (a,b) \in G^2: ab \in A; (b,ba) \not \in A'_{23} ) &\leq \eps
 \end{align*}
and from these bounds, \eqref{lo}, and \eqref{gaa} we conclude that
$$ \int_{G^2} 1_A(ab)\ d\mu_{G^2}(a,b) \leq 3\eps$$
and thus $\mu(A) \leq 3\eps$.  Since $\mu(A)$ was assumed to be positive, we obtain a contradiction for $\eps$ small enough, establishing Theorem \ref{loaded-2-ultra} and hence Theorem \ref{loaded-2}.

Now we prove Lemma \ref{ulrt}; this will be the standard proof of Lemma \ref{tri}, converted into ultraproduct form.  Observe that for each $c \in V_3$, the function $(a,b) \mapsto 1_{A_{13}}(a,c) 1_{A_{23}}(b,c)$ is measurable with respect to the product $\sigma$-algebra ${\mathcal B}_{V_1} \times {\mathcal B}_{V_2}$.  Thus we have
\begin{align*}
& \int_{V_1 \times V_2} 1_{A_{12}}(a,b) 1_{A_{13}}(a,c) 1_{A_{23}}(b,c) d\mu_{V_1 \times V_2}(a,b) \\
&\quad = 
\int_{V_1 \times V_2} \E(1_{A_{12}}|{\mathcal B}_{V_1} \times {\mathcal B}_{V_2})(a,b) 1_{A_{13}}(a,c) 1_{A_{23}}(b,c) d\mu_{V_1 \times V_2}(a,b).
\end{align*}
Integrating in $c$ using the Fubini-Tonelli theorem (Theorem \ref{ftl}) we conclude that
$$
\int_{V_1 \times V_2 \times V_3} \E(1_{A_{12}}|{\mathcal B}_{V_1} \times {\mathcal B}_{V_2})(a,b) 1_{A_{13}}(a,c) 1_{A_{23}}(b,c) d\mu_{V_1 \times V_2 \times V_3}(a,b,c) = 0.$$
Arguing similarly using the other two factors, we conclude that
$$ \int_{V_1 \times V_2 \times V_3} f_{12}(a,b) f_{13}(a,c) f_{23}(b,c)\ d\mu_{V_1 \times V_2 \times V_3}(a,b,c) = 0$$
where $f_{ij} := \E(1_{A_{ij}}|{\mathcal B}_{V_i} \times {\mathcal B}_{V_j} )$ for $ij=12,13,23$.

For each $ij=12,13,23$, let $\tilde A_{ij} \subset G^2$ be the set $\tilde A_{ij} := \{ x \in V_i \times V_j: f_{ij} \geq \eps/4 \}$.  Since we have the pointwise bound
$$ 0 \leq 1_{\tilde A_{ij}} \leq \eps^{-1} f_i$$
we conclude that
\begin{equation}\label{loe}
 \int_{V_1 \times V_2 \times V_3} 1_{\tilde A_{1,2}}(a,b) 1_{\tilde A_{13}}(a,c) 1_{\tilde A_{23}}(b,c)\ d\mu_{V_1 \times V_2 \times V_3}(a,b,c) = 0.
\end{equation}
For each $ij=12,13,23$, the sets $A_{ij}$ are measurable with respect to the product topology ${\mathcal B}_{V_i} \times {\mathcal B}_{V_j}$, which is generated by product sets $E_i \times F_j$ for $E_i \in {\mathcal B}_{V_i}$, $F_j \in {\mathcal B}_{V_j}$.  Approximating $A_{ij}$ to error $\eps/4$ by a finite combination of these sets, we can find finite sub-$\sigma$-algebras ${\mathcal B}'_{V_i,ij}, {\mathcal B}'_{V_j,ij}$ of ${\mathcal B}_{V_i}, {\mathcal B}_{V_j}$ respectively  such that
$$ \| 1_{\tilde A_{ij}} - \E(1_{\tilde A_{ij}}|{\mathcal B}'_{V_i,ij} \times {\mathcal B}'_{V_j,ij}) \|_{L^1(V_i \times V_j, {\mathcal B}_{V_i} \times {\mathcal B}_{V_j}, \mu_{V_i \times V_j})} \leq \eps/4.$$
By combining the finite factors together, we thus obtain a single finite factor ${\mathcal B}'_{V_i}$ for each $i=1,2,3$ with the property that
\begin{equation}\label{sor}
\| 1_{\tilde A_{ij}} - \E(1_{\tilde A_{ij}}|{\mathcal B}'_{V_i} \times {\mathcal B}'_{V_j}) \|_{L^1(V_i \times V_j, {\mathcal B}_{V_i} \times {\mathcal B}_{V_j}, \mu_{V_i\times V_j})} \leq \eps/4
\end{equation}
for $ij=12,13,23$.   By absorbing atoms of zero measure, we can assume that all atoms in the ${\mathcal B}'_{V_i}$ have positive measure.

For $ij=12,13,23$, let $A'_{ij}$ be the restriction of $A_{ij}$ to those atoms $E_i \times F_j$ of ${\mathcal B}'_{V_i} \times {\mathcal B}'_{V_j}$ for which
\begin{equation}\label{lemon}
 \E(1_{\tilde A_{ij}}|{\mathcal B}'_{V_i} \times {\mathcal B}'_{V_j}) > 2/3.
\end{equation}
We claim that \eqref{ooa} holds for any $a \in V_1, b \in V_2, c \in V_3$.  Indeed, let $E_1,E_2,E_3$ be the atoms of ${\mathcal B}'_{V_1},{\mathcal B}'_{V_2},{\mathcal B}'_{V_3}$ containing $a,b,c$ respectively.  From \eqref{lemon} and the Fubini-Tonelli theorem, we see that the sets
\begin{align*}
\{ (a,b,c) \in E_1 \times E_2 \times E_3: &(a,b) \in \tilde A_{12} \} \\
\{ (a,b,c) \in E_1 \times E_2 \times E_3: &(a,c) \in \tilde A_{13} \} \\
\{ (a,b,c) \in E_1 \times E_2 \times E_3: &(b,c) \in \tilde A_{23} \} 
\end{align*} 
each have density greater than $2/3$ in $E_1 \times E_2 \times E_3$, and hence the set
$$ \{ (a,b,c) \in E_1 \times E_2 \times E_3: (a,b) \in \tilde A_{12}; (a,c) \in \tilde A_{13}, \tilde A_{23} \} $$
has positive measure, contradicting \eqref{loe}.  This establishes \eqref{ooa}.

Finally, we need to show \eqref{mau}.  For sake of notation, we show this for $ij=12$, as the other two cases are analogous.
By construction, the set $A_{12} \Delta A'_{12}$ is contained in the set
$$ \{ (a,b) \in A_{12}: f_{12} < \eps/4 \} \cup \{ (a,b) \in \tilde A_{12}: \E(1_{\tilde A_{12}}|{\mathcal B}'_{V_1} \times {\mathcal B}'_{V_2}) > 2/3 \}$$
and so
$$\mu_{V_1 \times V_2}(A_{12} \Delta A'_{12}) \leq \int_{V_1 \times V_2} 1_{A_{12}} 1_{f_{12} < \eps/4}
+ 1_{\tilde A_{12}} 1_{\E(1_{\tilde A_{12}}|{\mathcal B}'_{V_1} \times {\mathcal B}'_{V_2}) > 2/3}\ d\mu_{V_1 \times V_2}.$$
The right-hand side can be written as the sum of
$$ \int_{V_1 \times V_2} f_{12} 1_{f_{12} < \eps/4}\ d\mu_{V_1 \times V_2}$$
and
$$  \int_{V_1 \times V_2} 1_{\tilde A_{12}} 1_{1_{\tilde A_{12}} - \E(1_{\tilde A_{12}}|{\mathcal B}'_{V_1} \times {\mathcal B}'_{V_2}) < 1/3}\ d\mu_{V_1 \times V_2}.
$$
The first integral is at most $\eps/4$, while the second expression is at most
$$ 3 \| 1_{\tilde A_{12}} - \E(1_{\tilde A_{12}}|{\mathcal B}'_{V_1} \times {\mathcal B}'_{V_2})\|_{{L^1(V_i \times V_j, {\mathcal B}_{V_i} \times {\mathcal B}_{V_j}, \mu_{V_i\times V_j})}}$$
which by \eqref{sor} is at most $3\eps/4$.  The claim \eqref{mau} follows.

Now we can prove Theorem \ref{loaded-4}.  We first observe that this theorem follows from an apparently weaker version in which the final condition
$ (gx_1g^{-1},\ldots,gx_kg^{-1}) \in A$ is deleted.  Namely, we will deduce Theorem \ref{loaded-4} from the following result:

\begin{theorem}[Multiple strong recurrence]\label{loaded-3}  Let $k \geq 1$ be a natural number.  For every $\delta>0$, there exists $\eps>0$ such that the following statement holds: if $G$ is a finite group, and $A$ is a subset of $G^k$ with $|A| \geq \delta |G|^k$, then there exist at least $\eps |G|^{k+1}$ tuples $(g,x_1,\ldots,x_k) \in G^{k+1}$ such that\footnote{Recall that by the convention of ignoring the initial block $gx_1,\ldots,gx_i$ when $i=0$ and the final block $x_{i+1},\ldots,x_k$ when $i=k$, we interpret $(gx_1,\ldots,gx_i,x_{i+1},\ldots,x_k)$ as $(x_1,\ldots,x_k)$ when $i=0$ and $(gx_1,\ldots,gx_k)$ when $x=k$.} $(gx_1,\ldots,gx_i,x_{i+1},\ldots,x_k) \in A$ for all $i=0,\ldots,k$.
\end{theorem}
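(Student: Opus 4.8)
The plan is to mimic the combinatorial proof of Theorem~\ref{loaded-2} given above, with the triangle removal lemma (Lemma~\ref{tri}) replaced by the $(k+1)$-partite, $k$-uniform hypergraph removal lemma (a standard extension of Lemma~\ref{tri}, due to Gowers and to Nagle, R\"odl, Schacht and Skokan); one could equally run the ultraproduct version, using the routine $k$-uniform generalization of Lemma~\ref{ulrt}. The crucial input is the analogue of the counting identity \eqref{form}, which comes from the \emph{bijective} change of variables on $G^{k+1}$ given by
$$ a_1 := gx_1,\qquad a_\ell := x_{\ell-1}^{-1} x_\ell \quad(2\le \ell\le k),\qquad a_{k+1} := x_k .$$
Under this substitution the $i$-th tuple $(gx_1,\dots,gx_i,x_{i+1},\dots,x_k)$ becomes the word
$$ w_i(a) := \bigl(a_1,\ a_1a_2,\ \dots,\ a_1\cdots a_i,\ a_{k+1}a_k^{-1}\cdots a_{i+2}^{-1},\ \dots,\ a_{k+1}a_k^{-1},\ a_{k+1}\bigr),$$
which, one checks, does not involve the coordinate $a_{i+1}$ (for $i=0$ it omits $a_1$, and for $i=k$ it omits $a_{k+1}$). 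Consequently the number of tuples $(g,x_1,\dots,x_k)$ sought in the theorem equals $|G|^{k+1}\int_{G^{k+1}}\prod_{i=0}^{k}1_A(w_i(a))\,d\mu_{G^{k+1}}(a)$: a product of $k+1$ indicator functions, the $i$-th of which depends on only $k$ of the $k+1$ variables.

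Granting this identity, the remainder follows the template of the proof of Theorem~\ref{loaded-2}. I would build a $(k+1)$-partite hypergraph with vertex classes $V_1,\dots,V_{k+1}$, each a copy of $G$, and for each $i=0,\dots,k$ declare a $k$-element transversal $(b_\ell)_{\ell\ne i+1}$ of the classes other than $V_{i+1}$ to be an edge of colour $i$ precisely when $w_i\bigl((b_\ell)_{\ell\ne i+1}\bigr)\in A$. A colour-respecting copy of $K^{(k)}_{k+1}$ in this hypergraph (one vertex per class, all $k+1$ facets present with the matching colours) is then exactly a tuple $(a_1,\dots,a_{k+1})$ with $w_i(a)\in A$ for all $i$, hence, via the change of variables, a solution tuple $(g,x_1,\dots,x_k)$. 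The ``degenerate'' solutions --- those with $g$ equal to the identity, for which all $k+1$ tuples coincide with a single point of $A$ --- number exactly $|A|\ge \delta|G|^k$, and since two distinct points of $A$ yield tuples $a$ differing in at least two coordinates, the corresponding copies of $K^{(k)}_{k+1}$ are pairwise edge-disjoint.

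To conclude, suppose for contradiction that there were fewer than $\eps|G|^{k+1}$ solution tuples, with $\eps>0$ to be chosen small in terms of $\delta$ and $k$. The hypergraph, on $n:=(k+1)|G|$ vertices, would then contain fewer than $\eps|G|^{k+1}\le\eps n^{k+1}$ copies of $K^{(k)}_{k+1}$, so for $\eps$ small enough (as dictated by the hypergraph removal lemma applied with parameter $\delta/(k+1)^k$) one could destroy all such copies by deleting fewer than $\frac{\delta}{(k+1)^k}n^k = \delta|G|^k$ edges; this contradicts the fact that the $\ge\delta|G|^k$ pairwise edge-disjoint degenerate copies each require at least one edge to be removed. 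The contradiction is valid for every finite group $G$, so at least $\eps|G|^{k+1}$ tuples exist, proving Theorem~\ref{loaded-3}.

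I expect the only genuinely new point to be the change of variables in the first paragraph together with the verification that $w_i$ omits $a_{i+1}$ for each $i$ (this is the one computational step, best done by first checking small $k$ and then by a telescoping argument); everything else is either a direct transcription of the argument already carried out for Theorem~\ref{loaded-2}, or an invocation of the (deep but off-the-shelf) hypergraph removal lemma, with the edge-disjointness of the degenerate copies a short check. A secondary question is whether one can avoid the multidimensional removal lemma by some more elementary count; I do not see how, and the close analogy with corner-type theorems strongly suggests a removal lemma of this strength is genuinely required.
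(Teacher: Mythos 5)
Your proposal is correct and is essentially the paper's own proof of Theorem \ref{loaded-3}: the paper likewise performs a bijective change of variables on $G^{k+1}$ producing $k+1$ words each omitting exactly one coordinate, applies the simplex removal lemma (Lemma \ref{simplicio}), and derives the contradiction from the $|A| \geq \delta|G|^k$ edge-disjoint degenerate copies coming from $g=1$ (phrased there as a pigeonhole over the colour classes). Your substitution $a_1 = gx_1$, $a_\ell = x_{\ell-1}^{-1}x_\ell$, $a_{k+1}=x_k$ is a harmless reindexing/inversion of the paper's $y_i = x_k^{-1}\cdots x_i^{-1}$, $g = x_0\cdots x_k$, and the key verifications (that $w_i$ omits $a_{i+1}$, and that distinct degenerate solutions give edge-disjoint simplices) check out, with only trivial constant bookkeeping left to adjust in the removal-lemma step.
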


Indeed, if $k, \delta, \eps, A$ are as in Theorem \ref{loaded-4}, one can conclude that theorem by applying Theorem \ref{loaded-3} with $k$ replaced by $k+1$ and $A$ replaced by the set
$$ \{ (x_1,\ldots,x_{k+1}) \in G: (x_1 x_{k+1}^{-1}, \ldots, x_k x_{k+1}^{-1}) \in A \};$$
we leave the routine verification of this implication to the reader.

We also remark that a variant of Theorem \ref{loaded-3} can be proven by the arguments used to establish \cite[Corollary 6.4]{berg-amenable}, as noted in the comments after that corollary.  In this variant, the conclusion is instead that there exist at least $\eps |G|^{k+1}$ tuples $(g,x_1,\ldots,x_k) \in G^{k+1}$ such that $(x_1,\ldots,x_k) \in A$ and  $(x_1,\ldots,x_{i-1},gx_i,x_{i+1},\ldots,x_k) \in A$ for all $i=1,\ldots,k$.  

We now prove Theorem \ref{loaded-3}.  We will generalize the combinatorial proof of Theorem \ref{loaded-2}, by replacing the triangle removal lemma of Ruzsa and Szemer\'edi with the more general \emph{hypergraph removal lemma} first established in \cite{rodl}, \cite{rs}, \cite{gowers-hyper}.  (The measure-theoretic proof also generalizes, but we leave this as an exercise to the interested reader.) We will use the following special case of this lemma:

\begin{lemma}[Simplex removal lemma]\label{simplicio}  
Let $k \geq 1$ be an integer.  For every $\delta>0$ there exists $\eps>0$ such that if $V_0,\ldots,V_k$ are sets of $n$ vertices, and for each $i=0,\ldots,k$, $E_i \subset V_0 \times \ldots \times V_{i-1} \times V_{i+1} \times \ldots \times V_k$ is a set with the property that\footnote{Continuing the previous block-ignoring convention, we interpret $( x_0,\ldots,x_{i-1},x_{i+1},\ldots,x_k)$ as $(x_1,\ldots,x_k)$ when $i=0$ and $(x_0,\ldots,x_{k-1})$ when $i=k$.}
\begin{equation}\label{deltank}
 \sum_{x_0 \in V_0, \ldots, x_k \in V_k} \prod_{i=0}^k 1_{E_i}( x_0,\ldots,x_{i-1},x_{i+1},\ldots,x_k) \leq \eps n^{k+1},
\end{equation}
then it is possible to remove fewer than $\delta n^k$ elements from $E_i$ for each $i=0,\ldots,k$ to form a subset $E'_i$ such that
$$ \sum_{x_0 \in V_0, \ldots, x_k \in V_k} \prod_{i=0}^k 1_{E'_i}( x_0,\ldots,x_{i-1},x_{i+1},\ldots,x_k) = 0.$$
\end{lemma}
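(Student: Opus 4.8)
The plan is to obtain Lemma \ref{simplicio} as the instance of the hypergraph removal lemma of \cite{rodl}, \cite{rs}, \cite{gowers-hyper} corresponding to the complete $k$-uniform hypergraph on $k+1$ vertices, $K_{k+1}^{(k)}$ (the ``simplex''); the case $k=1$ is degenerate (the hypothesis forces $\min(|E_0|,|E_1|)\le \sqrt{\eps}\,n$, so one simply discards the smaller of the two sets), so I will assume $k\ge 2$. First I would repackage the data $(V_0,\ldots,V_k;E_0,\ldots,E_k)$ as a single $(k+1)$-partite $k$-uniform hypergraph $\mathcal{H}$ with vertex classes $V_0,\ldots,V_k$: for each $i$ and each transversal $k$-set $\{x_j: 0\le j\le k,\ j\ne i\}$ with $x_j\in V_j$, declare this set to be an edge of $\mathcal{H}$ exactly when $(x_0,\ldots,x_{i-1},x_{i+1},\ldots,x_k)\in E_i$, and put in no other edges. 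Every edge of $\mathcal{H}$ then carries a well-defined ``missing index'' $i$, so deleting an edge of $\mathcal{H}$ is the same as deleting one element of the corresponding $E_i$.

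The point of this construction is that a (partite) copy of $K_{k+1}^{(k)}$ in $\mathcal{H}$ is precisely a choice of one vertex $x_j\in V_j$ for each $j=0,\ldots,k$ such that, for every $i$, the face omitting $x_i$ is an edge, i.e.\ $(x_0,\ldots,x_{i-1},x_{i+1},\ldots,x_k)\in E_i$; hence the number of such copies equals the left-hand side of \eqref{deltank}, which is at most $\eps n^{k+1}$ by hypothesis. The hypergraph removal lemma, in its partite form and applied with the removal parameter rescaled by a constant depending only on the fixed integer $k$, then lets us delete at most $\delta n^k$ edges from $\mathcal{H}$ so that the resulting hypergraph contains no copy of $K_{k+1}^{(k)}$. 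Translating the deleted edges back through the dictionary ``edge with missing index $i$ $\leftrightarrow$ element of $E_i$'' yields subsets $E'_i\subset E_i$ with $|E_i\setminus E'_i|<\delta n^k$ for each $i=0,\ldots,k$ and with no tuple $(x_0,\ldots,x_k)\in V_0\times\cdots\times V_k$ realising $\prod_{i=0}^k 1_{E'_i}(x_0,\ldots,x_{i-1},x_{i+1},\ldots,x_k)=1$, which is exactly the stated conclusion.

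The genuine content lies entirely in the hypergraph removal lemma, which I am invoking as a black box: unlike the $k=1$ case treated via the Szemer\'edi regularity lemma, its proof rests on the full hypergraph regularity method, and this is the step I expect to be the main obstacle (and the reason we only cite it). For completeness I would also verify the harmless bookkeeping: the removal lemma is usually phrased for a hypergraph on $N$ vertices with $o(N^{k+1})$ simplices, and here $N=(k+1)n$, so all conversions cost only $k$-dependent constants. One could instead give a self-contained argument in the ultraproduct style of Lemma \ref{ulrt}, replacing the regularity lemma by conditional expectation onto a suitable increasing hierarchy of finite sub-$\sigma$-algebras attached to the faces of the simplex; the obstruction there is organising that hierarchy correctly (the analogue of hypergraph regularity), so I would not pursue it here.
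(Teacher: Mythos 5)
Your proposal is correct and is essentially the paper's argument: the paper simply declares Lemma \ref{simplicio} to be a special case of the hypergraph removal lemma (citing \cite[Theorem 1.13]{tao-hyper}), exactly the black-box reduction you describe. You have merely spelled out the routine bookkeeping (the $(k+1)$-partite encoding, the transversality of copies of $K_{k+1}^{(k)}$, the $k=1$ degeneracy, and the constant rescaling), all of which is fine.
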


\begin{proof}  This is a special case of \cite[Theorem 1.13]{tao-hyper}.
\end{proof}

Let $k,\delta$ be as in Theorem \ref{loaded-3}, let $\eps>0$ be a sufficiently small quantity, and let $G, A$ obey the hypotheses of Theorem \ref{loaded-3}.  Suppose for contradiction that there are fewer than $\eps |G|^{k+1}$ tuples $(g,x_1,\ldots,x_k) \in G^{k+1}$ such that $(gx_1,\ldots,gx_i,x_{i+1},\ldots,x_k) \in A$ for all $i=0,\ldots,k$.

For each $i=0,\ldots,k$, we set $V_i := G$, and then let $E_i \subset G^k$ be the set of all tuples $(x_0,\ldots,x_{i-1},x_{i+1},\ldots,x_k) \in G^k$ with the property that the $k$-tuple\footnote{Continuing previous conventions, we ignore the block $x_0, x_0 x_1, \ldots, x_0 \ldots x_{i-1}$ when $i=0$, and $x_k^{-1} \ldots x_{i+1}^{-1}, \ldots, x_k^{-1} x_{k-1}^{-1}, x_k^{-1}$ when $i=k$.}
$$ (x_0, x_0 x_1, \ldots, x_0 \ldots x_{i-1}, x_k^{-1} \ldots x_{i+1}^{-1}, \ldots, x_k^{-1} x_{k-1}^{-1}, x_k^{-1})$$
lies in $A$.  
For instance, if $k=3$, we have
\begin{align*}
E_0 &= \{ (x_1,x_2,x_3) \in G^3: (x_3^{-1}x_2^{-1}x_1^{-1}, x_3^{-1}x_2^{-1}, x_3^{-1}) \in A \} \\
E_1 &= \{ (x_0,x_2,x_3) \in G^3: (x_0, x_3^{-1}x_2^{-1}, x_3^{-1}) \in A \} \\
E_2 &= \{ (x_0,x_1,x_3) \in G^3: (x_0, x_0 x_1, x_3^{-1}) \in A \} \\
E_3 &= \{ (x_0,x_1,x_2) \in G^3: (x_0, x_0 x_1, x_0 x_1 x_2) \in A \}.
\end{align*}
Now suppose that $(x_0,\ldots,x_k)$ makes a non-zero contribution to the left-hand side of \eqref{deltank}, thus
$$ (x_0, x_0 x_1, \ldots, x_0 \ldots x_{i-1}, x_k^{-1} \ldots x_{i+1}^{-1}, \ldots, x_k^{-1} x_{k-1}^{-1}, x_k^{-1}) \in A$$
for all $i=0,\ldots,k$.  If we then define
$$ y_i := x_k^{-1} \ldots x_i^{-1}$$
for $i=1,\ldots,k$, and
$$ g := x_0 \ldots x_k,$$
we conclude that
$$ (gy_1,\ldots,gy_i,y_{i+1},\ldots,y_k) \in A$$
for $i=0,\ldots,k$.  From our hypotheses, we conclude that \eqref{deltank} holds.  Applying Lemma \ref{simplicio} (with $\delta$ replaced by $\delta/(k+1)$), we conclude (for $\eps$ small enough) that we can remove fewer than $\frac{\delta}{k+1} |G|$ elements from $E_i$ to create a subset $E'_i$, with the property that there do not exist any tuples $(x_0,\ldots,x_{k+1}) \in G^{k+1}$ with the property that $(x_0,\ldots,x_{i-1},x_{i+1},\ldots,x_k) \in E'_i$ for all $0 \leq i \leq k$.

Let $(y_1,\ldots,y_k)$ be an element of $A$.  Applying the previous claim with
$$ (x_0,\ldots,x_k) := (y_0^{-1} y_1, y_1^{-1} y_2, \ldots, y_k^{-1} y_{k+1})$$
with the convention that $y_0 = y_{k+1}=1$, we see that there is at least one $0 \leq i \leq k$ such that
$$ (y_0^{-1} y_1, \ldots, y_{i-2}^{-1} y_{i-1}, y_i^{-1} y_{i+1}, \ldots, y_k^{-1} y_{k+1}) \not \in E'_i$$
(using the same block-ignoring conventions as before).  On the other hand, from definition of $E_i$ and the hypothesis $(y_1,\ldots,y_k) \in A$, we see that
$$ (y_0^{-1} y_1, \ldots, y_{i-2}^{-1} y_{i-1}, y_i^{-1} y_{i+1}, \ldots, y_k^{-1} y_{k+1}) \in E_i.$$
Applying the pigeonhole principle, we conclude that there exist $0 \leq i \leq k$ such that
$$ (y_0^{-1} y_1, \ldots, y_{i-2}^{-1} y_{i-1}, y_i^{-1} y_{i+1}, \ldots, y_k^{-1} y_{k+1}) \in E_i \backslash E'_i$$
for at least $|A|/(k+1) \geq \frac{\delta}{k+1} |G|$ tuples $(y_1,\ldots,y_k)$, thus $|E_i \backslash E'_i| \geq \frac{\delta}{k+1} |G|$.  But this contradicts the construction of $E'_i$, and Theorem \ref{loaded-3} follows. $\Box$

\section{Remarks on specific ultra quasirandom groups}

In this section, $\alpha \in \beta \N \backslash \N$ is a fixed non-principal ultrafilter.

In Section \ref{ultrasec}, some general mixing properties were obtained for arbitrary ultra quasirandom groups.  It turns out that for some specific examples of ultra quasirandom groups, one can obtain further mixing properties, particularly for ultraproducts of the finite groups $SL_2(F_p)$, the mixing properties of which have been intensively studied.  Indeed, thanks to the existing literature on such groups, we have the following results:

\begin{theorem}[Mixing properties of $SL_2(F_p)$]\label{mix}  Let $p_\n$ be a sequence of primes going to infinity, let $F$ be the characteristic zero pseudo-finite field\footnote{In model theory, a \emph{pseudo-finite field} is a field which obeys all first-order sentences in the language of fields that are true in all finite fields (for instance, a pseudo-finite field has exactly one field extension of each finite degree).  In particular, any ultraproduct of finite fields is a pseudo-finite field.} $F := \prod_{\n \to \alpha} F_{p_\n}$, and let $G$ be the ultra quasirandom group $G:= SL_2(F) = \prod_{\n \to \alpha} SL_2(F_{p_\n})$. 
\begin{itemize}
\item[(i)]  (Weak mixing) $G$ has no non-trivial finite-dimensional unitary representations; thus, for any $d \geq 1$, the only homomorphism from $G$ to $U_d(\C)$ is the trivial one.
\item[(ii)]  (Almost sure expansion) There is an absolute constant $\eps>0$ with the property that for $\mu_{G^2}$-almost every pair $(a,b) \in G$, one has the spectral gap property
\begin{equation}\label{loo}
 \| \frac{1}{4} (L_a + L_b + L_{a^{-1}} + L_{b^{-1}}) \|_{\operatorname{op}} \leq 1 - \eps,
\end{equation}
where $\| \|_{\operatorname{op}}$ denotes the operator norm on the space $L^2(G, {\mathcal B}_G, \mu_G)_0$ of mean zero functions in $L^2(G, {\mathcal B}_G, \mu_G)$.
\item[(iii)] (Uniform expansion in most cases)  There exists a universal subset $A$ of the primes of density zero, such that if the primes $p_\n$ all avoid this set, then the spectral gap property \eqref{loo} holds for \emph{all} pairs $(a,b) \in G$ which generate a Zariski-dense subgroup of $G$.
\item[(iv)]  (Uniform expansion in a $SL_2(\Z)$ component)  Identifying $\Z$ with the subring generated by the identity $1$ of $F$, the spectral gap property \eqref{loo} holds for any $(a,b) \in SL_2(\Z)$ generating a Zariski-dense subgroup of $SL_2(\Z)$.
\item[(v)] (Lack of mild mixing\footnote{A measure-preserving system is said to be \emph{mild mixing} if there are no non-trivial rigid functions; in the case of actions of abelian groups, this concept is intermediate in strength between weak mixing and strong mixing.})  For any $a \in G$, there exists a Loeb-measurable subset $E$ of $G$ such that $\mu_G(E)=1/2$ and $L_a E = E$.  (In particular, $\mu_G( L_{a^n} E \cap E ) \not \to \mu_G(E)^2$ as $n \to \infty$.
\end{itemize}
\end{theorem}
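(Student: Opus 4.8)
The plan is to prove the five parts more or less separately, deriving (ii)--(iv) from the finitary expansion theory of $SL_2(F_p)$ via the ultraproduct correspondence principle, and proving (i) and (v) by hand. For (i), I would take a finite-dimensional unitary representation $\rho:G\to U_d(\C)$ and show it is trivial. Write $u_+(s):=\bigl(\begin{smallmatrix}1&s\\0&1\end{smallmatrix}\bigr)$ and $u_-(s):=\bigl(\begin{smallmatrix}1&0\\s&1\end{smallmatrix}\bigr)$ for $s\in F$; these are one-parameter subgroups of $G$, and conjugation by $\diag(t,t^{-1})\in G$ sends $u_+(s)$ to $u_+(t^2 s)$. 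The operators $\rho(u_+(s))$, $s\in F$, commute and are unitary, hence simultaneously diagonalizable: there are an orthonormal basis $e_1,\dots,e_d$ of $\C^d$ and characters $\chi_j:(F,+)\to S^1$ with $\rho(u_+(s))e_j=\chi_j(s)\,e_j$. Since $\rho(u_+(t^2 s))$ is conjugate to $\rho(u_+(s))$, these operators share the same eigenvalue multiset, so $\{\chi_j(t^2 s)\}_j=\{\chi_j(s)\}_j$ for all $s\in F$ and $t\in F^\times$; in particular, for any fixed $s_0$ the character $\chi_1$ takes at most $d+1$ values on $s_0(F^\times)^2\cup\{0\}$. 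Because $F$ has characteristic $0$, the identity $x=(\tfrac{1+x}{2})^2-(\tfrac{1-x}{2})^2$ expresses every element of $F$ as a difference of two squares, so $F=s_0F=\bigl(s_0(F^\times)^2\cup\{0\}\bigr)-\bigl(s_0(F^\times)^2\cup\{0\}\bigr)$, whence $\chi_1$ takes at most $(d+1)^2$ values on all of $F$ and $\ker\chi_1$ has finite index in $(F,+)$. But $(F,+)$, being a $\Q$-vector space, is divisible and so has no proper finite-index subgroup, which forces $\chi_1$ trivial. Hence $\rho$ annihilates every $u_+(s)$, and the symmetric argument with $\diag(t^{-1},t)$ shows it annihilates every $u_-(s)$; since the elementary matrices $u_\pm(s)$ generate $SL_2(F)$, $\rho$ is trivial.

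For (ii) and (iii) the common mechanism is the comparison $\|T\|_{\operatorname{op}}\le\st\lim_{\n\to\alpha}\|T_\n\|_{\operatorname{op}}$, where $a=\lim a_\n$, $b=\lim b_\n$, $T:=\tfrac14(L_a+L_b+L_{a^{-1}}+L_{b^{-1}})$ acts on the mean-zero functions of $L^2(G,{\mathcal B}_G,\mu_G)$, and $T_\n$ is the corresponding finitary averaging operator on $SL_2(F_{p_\n})$; this I would prove by approximating a mean-zero $L^2$ function by an internal simple function of small mean (Lemma \ref{appx}) and using that $\|Tf\|_{L^2(G)}=\st\lim\|T_\n f_\n\|_{L^2(G_\n)}$ for internal $f$. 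Given this, (ii) follows from the Bourgain--Gamburd expansion theorem in the form ``for some absolute $\eps_0>0$, the proportion of pairs $(a_\n,b_\n)\in SL_2(F_{p_\n})^2$ with $\|T_\n\|_{\operatorname{op}}\le 1-\eps_0$ tends to $1$ as $p_\n\to\infty$'': then $E:=\prod_{\n\to\alpha}\{(a_\n,b_\n):\|T_\n\|_{\operatorname{op}}\le 1-\eps_0\}$ is internal with $\mu_{G^2}(E)=1$, and $\|T\|_{\operatorname{op}}\le 1-\eps_0$ on $E$ by the comparison, so we take $\eps:=\eps_0$. For (iii) one invokes the uniform version of that theorem --- outside a density-zero set $A$ of primes, $\|T_\n\|_{\operatorname{op}}\le 1-\eps_0$ holds for \emph{every} pair generating $SL_2(F_{p_\n})$ --- together with the observation that, when all $p_\n\notin A$, a pair $(a,b)$ generating a Zariski-dense subgroup of $G$ satisfies $\langle a_\n,b_\n\rangle=SL_2(F_{p_\n})$ for an $\alpha$-large set of $\n$: by Dickson's classification of the subgroups of $SL_2$ over a prime field, failure of this would (after pigeonholing over $\alpha$ on the isomorphism type of $\langle a_\n,b_\n\rangle$ and on the finitely many relevant presentations) place $\langle a,b\rangle$ inside a proper algebraic subgroup of $G$ --- a Borel, a torus normalizer, or a fixed finite subgroup --- contradicting Zariski-density.

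For (iv), a pair $(a,b)\in SL_2(\Z)\subset SL_2(F)$ generating a Zariski-dense subgroup of $SL_2(\Z)$ reduces, by strong approximation, to a generating pair of $SL_2(F_p)$ for all large primes $p$, and the corresponding Cayley graphs form an expander family with some gap $\eps_0>0$ by the Bourgain--Gamburd theorem for reductions of a fixed Zariski-dense subgroup of $SL_2(\Z)$ (the requisite non-concentration in proper subgroups being automatic once $p$ is large); since $p_\n\to\infty$ the comparison of the previous paragraph yields \eqref{loo} (with $\eps_0$ possibly depending on $(a,b)$ unless one appeals to the uniform expansion bounds for $SL_2(\Z)$). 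For (v), writing $a=\lim a_\n$ and $m_\n:=|\langle a_\n\rangle|$, every element of $SL_2(F_{p_\n})$ has order dividing one of $p_\n$, $p_\n-1$, $p_\n+1$, so $m_\n\le 2(p_\n+1)=o(|SL_2(F_{p_\n})|)$; left multiplication by $a_\n$ partitions $SL_2(F_{p_\n})$ into right $\langle a_\n\rangle$-cosets of size $m_\n$, and taking $E_\n$ to be a union of $\lfloor|SL_2(F_{p_\n})|/(2m_\n)\rfloor$ of them produces an $a_\n$-invariant set with $|E_\n|/|SL_2(F_{p_\n})|\to\tfrac12$; then $E:=\prod_{\n\to\alpha}E_\n$ is internal with $\mu_G(E)=\tfrac12$ and $L_aE=E$, so $\mu_G(L_{a^n}E\cap E)=\tfrac12$ for all $n$.

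I expect the main obstacle to be not any single manipulation but the assembly of the correct finitary expansion inputs in (ii)--(iv): once the appropriate (uniform) Bourgain--Gamburd-type theorems are in hand, the transfer to the ultra quasirandom group is routine. Among the self-contained steps the most delicate is the bookkeeping in (iii) converting Zariski-density over the pseudo-finite field $F$ into surjectivity of the reductions onto $SL_2(F_{p_\n})$, via Dickson's classification and pigeonholing over $\alpha$; part (i) is elementary once one notices that the conjugacy relation $u_+(s)\sim u_+(t^2 s)$ forces the characters of the unipotent subgroup to have finite image, which divisibility of $(F,+)$ then annihilates; and part (v) is immediate given that cyclic subgroups of $SL_2(F_p)$ have size $O(p)=o(|SL_2(F_p)|)$.
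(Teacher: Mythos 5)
Your proposal is correct. For (ii)--(v) it takes essentially the same route as the paper: transfer the Bourgain--Gamburd almost-sure spectral gap through an internal exceptional set of vanishing density for (ii); combine Breuillard--Gamburd with Dickson's classification of the proper subgroups of $SL_2(F_{p_\n})$ for (iii), so that failure of generation for an $\alpha$-large set of $\n$ traps $\langle a,b\rangle$ in a proper algebraic subgroup, contradicting Zariski-density; reduce (iv) to uniform expansion for reductions of a Zariski-dense subgroup of $SL_2(\Z)$ (the paper leaves this case to the reader, and your caveat about the dependence of the gap on $(a,b)$ absent the uniform version is fair); and take $E$ to be an ultraproduct of unions of right cosets of the cyclic groups $\langle a_\n\rangle$ for (v). The genuine divergence is in (i). The paper fixes the single element $a=u_+(1)$, uses the conjugacy $a\sim a^m$ for perfect squares $m$ to force the eigenvalues of $\rho(a)$ to be roots of unity and hence $\rho(a)=1$, and then propagates this to all of $u_+(F)$ via the fact, transferred from finite fields by the ultraproduct, that every element of $F$ is a sum of three squares. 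You instead simultaneously diagonalize the whole commuting family $\rho(u_+(s))$, obtain characters $\chi_j$ of $(F,+)$ whose value multisets are invariant under $s\mapsto t^2s$, bound the image of $\chi_1$ by $(d+1)^2$ elements using the characteristic-zero identity $x=\left(\tfrac{1+x}{2}\right)^2-\left(\tfrac{1-x}{2}\right)^2$, and kill the resulting finite-index kernel by divisibility of the $\Q$-vector space $(F,+)$. Both arguments pivot on the relation $u_+(s)\sim u_+(t^2s)$; yours stays entirely inside $F$ and avoids transferring the three-squares fact from finite fields, which is slightly cleaner, while the paper's version is written to run parallel to its finitary Lemma \ref{frob}. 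Either way, triviality on $u_\pm(F)$ plus generation of $SL_2(F)$ by elementary matrices finishes the proof.
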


Of course, similar results hold if the left shift $L_g$ is replaced with the right shift $R_g$ throughout.  Among other things, the uniform expansion properties of the ultra quasirandom group $G$ established in the above theorem suggest that this group behaves very ``non-amenably''.  In general, it appears that ergodic theory tools that are restricted to amenable group actions are not suitable for the analysis of ultra quasirandom groups.

\begin{proof}  We begin with (i).  This does not seem to follow directly from Lemma \ref{frob}, but can be deduced from modifying the \emph{proof} of that lemma, as follows.   Suppose for contradiction that we have a non-trivial representation $\rho: SL_2(F) \to U_d(\C)$ on a unitary group of some finite dimension $d$.  Set $a$ to be the group element
$$ a := \begin{pmatrix} 1 & 1 \\ 0 & 1 \end{pmatrix},$$
and suppose first that $\rho(a)$ is non-trivial.  Arguing as in the proof of Lemma \ref{frob}, we see that the eigenvalues of $\rho(a)$ are permuted by the operation $x \mapsto x^m$ for any perfect square $m \in \N$, because $a$ is conjugate to $a^{m}$.  In particular, this implies that all the eigenvalues are roots of unity; clearing denominators, we see that $\rho(a^m)=1$ for some perfect square $m \in \N$, and hence $\rho(a)=1$.  Conjugating again, this time by the diagonal matrix with entries $m,m^{-1}$ for a non-zero $F$, we see that
$$ \rho( \begin{pmatrix} 1 & m^2 \\ 0 & 1 \end{pmatrix} ) = 1$$
for all $m \in F$.  In each finite field $F_{p_\n}$, it is a classical fact that every residue class is the sum of three quadratic residues; taking ultraproducts, the same claim is true in $F$.  As $\rho$ is a homomorphism, we thus see that
$$ \rho( \begin{pmatrix} 1 & t \\ 0 & 1 \end{pmatrix} ) = 1$$
for any $t \in F$.  By conjugation, we thus also have
$$ \rho( \begin{pmatrix} 1 & 0 \\ t & 1 \end{pmatrix} ) = 1.$$
These two one-parameter groups of matrices are easily verified to generate $SL_2(F)$, and so $\rho$ is trivial, giving the desired contradiction.

Now we establish (ii).  We will use (as a black box) one of the main theorems \cite[Theorem 2]{bourgain-gamburd} of Bourgain and Gamburd, which in our notation asserts that that for each prime $p_\n$ there exists an exceptional set $E_\n$ of $SL_2(F_{p_\n}) \times SL_2(F_{p_\n})$ of density $\mu_{SL_2(F_{p_\n}) \times SL_2(F_{p_\n})}(E_\n)$ going to zero as $\n \to \infty$, and an absolute constant $\eps>0$, such that
\begin{equation}\label{alo}
 \| \frac{1}{4} (L_{a_\n} + L_{b_\n} + L_{a^{-1}_\n} + L_{b^{-1}_\n}) \|_{\operatorname{op}} \leq 1 - \eps,
\end{equation}
for all $(a_\n, b_\n) \in SL_2(F_{p_\n}) \times SL_2(F_{p_\n}) \backslash E_\n$.  If we let $E := \prod_{\n \to \alpha} E_\n$, then $E$ is a null subset of $G \times G$, and for any $(a,b) \in G \times G \backslash E$, we see upon taking ultralimits that
$$
 \| \frac{1}{4} (L_{a} + L_{b} + L_{a^{-1}} + L_{b^{-1}}) f\|_{L^2(G, {\mathcal B}_G, \mu_G)} \leq (1 - \eps) \|f\|_{L^2(G, {\mathcal B}_G, \mu_G)}$$
 whenever $f$ is the standard part of a bounded internal function of mean zero, giving \eqref{loo} for $\mu_{G \times G}$-almost all $(a,b)$, as required.
 
In a similar vein, to prove (iii) we use the main result of Breuillard and Gamburd \cite{breuillard-gamburd} which shows that there exists a subset $A$ of the primes of zero relative density, such that if $p_\n$ avoids $A$, then the spectral gap \eqref{alo} holds whenever $a_\n$ and $b_\n$ generate $SL_2(F_{p_\n})$.  The classification of all proper subgroups of $SL_2(F_{p_\n})$ are classical, and it is known that all such subgroups either have size $O(1)$ or else are contained in a group containing a conjugate of the Borel subgroup
$$ B(F_{p_\n}) := \{ \begin{pmatrix} a & t \\ 0 & a^{-1} \end{pmatrix}: a \in F_{p_\n} \backslash \{0\}; t \in F_{p_\n} \}$$
with index $O(1)$.  Taking ultraproducts, we see that if $(a,b) \in G \times G$ is such that \eqref{loo} fails, then $a,b$ either lie in a finite subgroup of $G$, or a group containing a conjugate of the Borel subgroup $B(F)$ with finite index.  In either case, $a,b$ lie in a proper algebraic subgroup of $SL_2$, and the claim (iii) follows.

The claim (iv) follows very similarly from \cite[Theorem 1]{bourgain-gamburd} and is left to the reader, so we turn to (v). Let $a = \lim_{\n \to \alpha} a_\n$ be an element of $G$, so that $a_\n \in SL_2(F_{p_\n})$ for an $\alpha$-large set of $\n$.  For each $\n$, we consider the cyclic subgroup $\langle a_\n \rangle$ of $SL_2(F_{p_\n})$ generated by $a_\n$. This is an abelian subgroup of $SL_2(F_{p_\n})$, and as such can easily be verified to have cardinality $O(p_\n)$.  In particular, the index of $\langle a_\n \rangle$ in $SL_2(F_{p_\n})$ (which has order comparable to $p_\n^3$) goes to infinity as $\n \to \infty$.  As such, one can form (for an $\alpha$-large set of $\n$) a subset $E_\n$ of $SL_2(F_{p_\n})$ which is the union of right cosets of $\langle a_\n \rangle$, and whose density $\mu_{SL_2(F_{p_\n})}(E_\n)$ converges to $1/2$ as $\n \to \infty$.  Setting $E := \prod_{\n \to \alpha} E_\n$, we obtain the claim. 
\end{proof}

We do not know if all ultra quasirandom groups obey the conclusion (i) of the above proposition.  However, all ultra quasirandom groups obey (v), because one can show that the index of any subgroup $H$ in a $D$-quasirandom group $G$ is at least $D$ (otherwise the quasiregular representation
on $L^2(G/H)$ would be too small of a dimension), and one can run the argument used to prove (v) above to handle the general case.  The \emph{uniform expansion conjecture} asserts that the set $A$ in (iii) can be deleted, thus unifying (ii)-(iv), but this conjecture remains open.

Finally, we observe that the failure of mild mixing that occurs in Theorem \ref{mix}(v) also occurs for other limits of finite groups than ultra quasirandom groups.  We illustrate this with the infinite alternating group $A_\infty$, which is the direct limit of the finite alternating groups $A_n$.  (This result is not used elsewhere in the paper.)

\begin{proposition}[Failure of mild mixing]  Let $S_\infty$ be the group of all bijections of $\N$ that fix all but finitely many natural numbers; this group can be viewed as the union of the finite permutation groups $S_n$, which are the subgroup which fix all natural numbers but $\{1,\ldots,n\}$.  Let $A_\infty$ be the index two subgroup of $S_\infty$ consisting of the union of the alternating groups $A_n$.  Then there exists an ergodic action of $A_\infty$ on some probability space $(X,{\mathcal X},\mu)$, a sequence $g_1,g_2,\ldots$ of distinct elements of $A_\infty$, and a subset $E$ of $X$ of measure $\mu(E)=1/2$ such that $g_n E = E$ for all $n$.
\end{proposition}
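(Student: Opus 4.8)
The plan is to realise $A_\infty$ through its natural permutation action on a Bernoulli space. I would take $X := \{0,1\}^{\N}$ with its Borel $\sigma$-algebra ${\mathcal X}$ and the product Bernoulli measure $\mu := (\tfrac12\delta_0 + \tfrac12\delta_1)^{\otimes \N}$, and let $A_\infty$ act on $X$ by permuting coordinates via $(g\cdot x)_i := x_{g^{-1}(i)}$. Since each $g$ merely permutes coordinates and $\mu$ is exchangeable, this is a measure-preserving action of $A_\infty$ on $(X,{\mathcal X},\mu)$.

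The first substantive step is to check that this action is ergodic, which amounts to a parity-sensitive version of the Hewitt--Savage zero--one law. Given an invariant measurable set $A$, I would approximate $1_A$ in $L^1(X,\mu)$ to within $\eps$ by the indicator $1_B$ of a cylinder set depending only on the coordinates in $\{1,\dots,2m\}$ for some $m$ --- note that one is free to use an \emph{even} number of coordinates. Let $\sigma \in S_\infty$ be the product of the $2m$ transpositions $(i \; 2m+i)$ for $1 \le i \le 2m$; since $2m$ is even, $\sigma$ is an even permutation, hence $\sigma \in A_\infty$, and $\sigma\cdot B$ is a cylinder set in the coordinates $\{2m+1,\dots,4m\}$, so $B$ and $\sigma\cdot B$ are $\mu$-independent. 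Using the invariance $\sigma\cdot A = A$ together with measure-preservation, one gets $|\mu(A) - \mu(B\cap\sigma\cdot B)| \le 2\eps$, while $\mu(B\cap\sigma\cdot B) = \mu(B)^2$ and $|\mu(B)^2 - \mu(A)^2| \le 2\eps$, so $|\mu(A) - \mu(A)^2| \le 4\eps$; letting $\eps \to 0$ forces $\mu(A) \in \{0,1\}$.

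The remaining steps are routine. Set $E := \{x \in X : x_1 = 0\}$, so that $\mu(E) = \tfrac12$. For $g \in A_\infty$ the event $\{g\cdot x \in E\}$ is precisely $\{x_{g^{-1}(1)} = 0\}$, so $g\cdot E = E$ if and only if $g$ fixes the point $1$. The stabiliser of $1$ in $A_\infty$ is infinite --- for instance it contains the pairwise distinct three-cycles $(2 \; 3 \; k)$ for $k = 4,5,6,\dots$ --- so I would take $g_1, g_2, \dots$ to be any sequence of distinct elements of this stabiliser; then the $g_n$ are distinct and $g_n \cdot E = E$ for every $n$, as required. The only point demanding genuine attention is the parity bookkeeping in the ergodicity step: the block-swapping permutation in the Hewitt--Savage argument must be chosen even, which is exactly why the cylinder approximation is performed over an even number of coordinates; everything else is standard.
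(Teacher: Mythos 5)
Your proof is correct, and it reaches the goal by a more elementary route than the paper. The paper never writes down a Bernoulli shift explicitly: it takes the left-shift action on the Loeb probability space of the ultraproduct $\prod_{\n \to \alpha} S_\n$, passes to the factor generated by the set $B$ of permutations sending an odd number to $1$ together with its $A_\infty$-translates, and proves ergodicity of that factor by a finite counting argument (for a random $\sigma_\n \in A_\n$ the parities of $\sigma_\n^{-1}(1),\ldots,\sigma_\n^{-1}(M)$ become asymptotically independent Bernoulli variables, and this independence is transferred through the ultralimit); the invariant set is then $B$ itself, fixed by $L_g$ whenever $g(1)=1$. Since $L_g$ moves the parity datum at index $i$ to index $g^{-1}(i)$, the paper's factor is measure-theoretically the same $A_\infty$-system as your coordinate-permutation action on $(\{0,1\}^{\N}, (\tfrac12\delta_0+\tfrac12\delta_1)^{\otimes\N})$, and your set $E=\{x: x_1=0\}$ is the image of their $B$. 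What you do differently is construct this system directly and prove ergodicity via a Hewitt--Savage-type block-swapping argument, with the one genuinely non-routine observation being the parity bookkeeping: the swap of two disjoint blocks must be an \emph{even} permutation, which you arrange by padding the cylinder approximation to an even number of coordinates (composing with a transposition of two irrelevant coordinates would also work). Your approach buys a short, self-contained proof with no ultraproducts or Loeb measure; the paper's approach is longer but serves its thematic purpose of exhibiting the failure of mild mixing inside a limit of finite groups, in parallel with Theorem \ref{mix}(v). Both arguments ultimately rest on the same two facts: the stabilizer of a point in $A_\infty$ is infinite, and there is enough asymptotic independence to force ergodicity.
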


\begin{proof} (Sketch)  Let $G := \prod_{\n \to \alpha} S_\n$.  Thanks to the nesting of the $S_n$, $G$ naturally contains an embedded copy of $\bigcup_n S_n = S_\infty$ and hence $A_\infty$.  Thus $A_\infty$ acts on the probability space $(G, {\mathcal B}_G, \mu_G)$ by left shift.  This system itself is not ergodic; for instance, if $E_\n$ denotes the set of permutations in $S_\n$ that map an odd number to $\n$, then one can verify that the ultraproduct $E := \prod_{\n \to \alpha} E_\n$ has Loeb measure $1/2$ but is invariant up to null sets by the action of $S_\infty$.  However, we can create ergodic factors of this action as follows.  Let $B_\n$ denote the set of permutations in $S_\n$ that map an odd number to $1$, and let $B := \prod_{\n \to \infty} B_\n$ be the ultraproduct.  One easily verifies that $\mu_G(B) = 1/2$.  Let $(X,{\mathcal X},\mu)$ be the factor of $(G,{\mathcal B},\mu_G)$ generated by $B$ and the $A_\infty$ action, thus $X=G$, $\mu$ is the restriction of $\mu_G$ to ${\mathcal X}$, and any set in ${\mathcal X}$ can be approximated to arbitrary accuracy in $\mu_G$ by a finite boolean combination of shifts $L_g B$ of $B$ with $g \in S_\infty$.  Any such boolean combination is a set $F$ with the property that the membership of a given permutation $\sigma = \lim_{\n \to \alpha} \sigma_\n \in G$ in $F$ depends only on the parity $\sigma^{-1}(i) \hbox{ mod } 2 = \lim_{\n \to \alpha} \sigma_\n^{-1}(i) \hbox{ mod } 2$ of preimage of a finite number of natural numbers $i$.  Because of this, we will have the Bernoulli-type mixing property $\mu_G( L_g F \cap F' ) = \mu_G(F) \mu_G(F')$ for any such boolean combinations $F,F'$, provided $g \in A_\infty$ maps a certain finite set of natural numbers to sufficiently large values.  Indeed, a simple counting argument shows that if $M$ is a fixed natural number and $\sigma_\n$ is chosen at random from $A_\n$, then the parities of the $M$ quantities of $\sigma_{\n}^{-1}(\{1\}),\ldots,\sigma_\n^{-1}(\{M\})$ behave like independent Bernoulli variables in the limit $\n \to \infty$, giving the claim. This demonstrates ergodicity of the $A_\infty$ action on $(X,{\mathcal X},\mu)$.  On the other hand, if $g$ is any permutation that fixes $1$, then $E$ is fixed by $L_g$, and so mild mixing fails.
\end{proof}

\end{document}